\newtheorem{thm}{Theorem}
\newtheorem{cor}{Corollary}
\theoremstyle{plain}
\newtheorem{theorem}{Theorem}[section]
\newtheorem{proposition}[theorem]{Proposition}
\newtheorem{corollary}[theorem]{Corollary}
\newtheorem{lemma}[theorem]{Lemma}
\newtheorem*{ack}{Acknowledgment}
\theoremstyle{definition}
\newtheorem{definition}[theorem]{Definition}
\newtheorem{remark}[theorem]{Remark}
\newtheorem{conclusion}[theorem]{Conclusion}
\newtheorem{example}[theorem]{Example}
\newtheorem{question}[theorem]{Question}
\def\cha{\mathrm{char}\ }
\def\Proj{\mathrm{Proj}\ }
\def\Spech{\mathrm{Spech}}
\def\Pgl{\mathrm{PGL}}
\def\Hilb{\mathrm{Hilb}}
\def\Stab{\mathrm{Stab}}
\def\Gor{\mathrm{Gor}}
\def\Grass{\mathrm{Grass}}
\def\cod{\mathrm{cod}}
\def\<{\left<}
\def\>{\right>}
\def\F{{\sf k}}
\def\G{\mathcal{G}}
\def\ns{\footnotesize \it}
\def\Z{\mathfrak{Z}}
\def\Ext{\mathrm{Ext}}
\def\CI{\mathrm{CI}}
\def\m{\mathfrak{m}}
\def\Ann{\mathrm{Ann}}
\newcommand\blfootnote[1]{
  \begingroup
  \renewcommand\thefootnote{}\footnote{#1}
  \addtocounter{footnote}{-1}
  \endgroup
}
\title{Limits of graded Gorenstein algebras of Hilbert function $(1,3^k,1)$}
\date{Revised September 12, 2023}
\begin{document}
\author{
	Nancy Abdallah\\[0.05in]{\ns University of Bor\r{a}s, Bor\r{a}s, Sweden.}\\[0.2in]
	Jacques Emsalem\footnote{Our friend and colleague 
	Jacques Emsalem passed on 12 July, 2022, after an illness; he had participated actively and enthusiastically in our work.}\\[0.05in]{\ns Paris, France.}\\[0.2in]
Anthony Iarrobino\\[.05in]
{\ns Department of Mathematics, Northeastern University, Boston, MA 02115,
USA.}\\[0.2in]Joachim Yam\'{e}ogo\\[0.05in]{\ns Universit\'e C\^ote d'Azur, CNRS, LJAD, France
}}
\maketitle
\blfootnote{
\textbf{Keywords}: Artinian Gorenstein algebra, closure, deformation, Hilbert function,  irreducible component, isomorphism class, limits, nets of conics, normal form, parametrization.\par
 \textbf{2021 Mathematics Subject Classification}: Primary: 13E10;  Secondary: 14A05.}

\abstract Let $R={\sf k}[x,y,z]$, the polynomial ring over a field $\sf k$.  Several of the authors previously classified nets of ternary conics and their specializations over an
algebraically closed field \cite{AEI}. We here show that when $\sf k$ is algebraically closed, and considering the Hilbert function sequence $T=(1,3^k,1), k\ge 2$ (i.e. $T=(1, 3,3,\ldots, 3,1)$ where $k$ is the multiplicity of $3$), then the family $G_T$ parametrizing graded Artinian algebra quotients  $A=R/I$ of $R$ having Hilbert function $T$ is irreducible, and $G_T$ is the closure of the family $\Gor(T)$ of Artinian Gorenstein algebras of Hilbert function $T$. We then classify up to isomorphism
the elements of these families $\Gor(T)$ and of $G_T$. Finally, we give examples of codimension three Gorenstein sequences, such as $(1,3,5,3,1)$, for which
$G_T$ has several irreducible components, one being the Zariski closure of $\Gor(T)$.

\renewcommand\footnotemark{}
\section{Introduction.} 
Let $R={\sf k}[x_1,\ldots,x_r]$ be a standard-graded polynomial ring over a field $\sf k$ of arbitrary characteristic (unless we otherwise specify) and denote by $S={\sf k}_{DP}[X_1,\ldots, X_r]$ the corresponding ring of divided powers, upon which $R$ acts by contraction: that is, 
\begin{equation}\label{contracteq}
x_i^k\circ X_j^u=\delta_{i,j}X_j^{u-k} \text { if $u\ge k$ or $0$ if $u<k$},
\end{equation}
where $\delta_{i,j}$ is the Kronecker delta.
We have $R=\oplus_{i=0}^\infty R_i$ the sum of its homogeneous degree-$i$ components.\par
 Let $T$ be a \emph{Gorenstein sequence},  a sequence that may occur as the Hilbert function $H(A)$ of a graded Artinian Gorenstein quotient $A=R/I$. The family of graded Gorenstein algebras $\Gor(T)$ of Hilbert function $T$ is an open subfamily of $G_T$, the family of all graded algebra quotients of $R$ having Hilbert function $T$.\par
For codimension $r=2$ the Gorenstein algebras $A$ are complete intersections. Also, in codimension 2 for $\sf k$ algebraically closed, we have that $\Gor(T)$ is irreducible of known dimension and is open dense in $G_T$  \cite{I1}. For codimension $r=3$, the Gorenstein sequences $T$ are known as a consequence of the D. Buchsbaum-D. Eisenbud Pfaffian resolution theorem
\cite{BuEi, St}; S.J. Diesel showed also for $r=3$ that over an algebraically closed field the variety $\Gor(T)$ is irreducible; and she determined a poset of minimal resolution strata, and their associated dimensions \cite{Di}. A consequence for $T=(1,3,3,1)$ is that
 $\Gor(T)=\overline{CI_T}$ is the closure of the family of complete instersections of Hilbert function $T$. For $T=(1,3^k,1), k\ge 3$ every $A\in \Gor(T)$ has five generators. We consider the\vskip 0.2cm\noindent
 {\bf Question.} Given $T$ a Gorenstein sequence of codimension three, is $\Gor(T)$ open dense in $G_T$?\par
 We will show that for $T=(1,3^k,1)$ the answer is  ``Yes'' (Theorem~\ref{1thm}) but in general is ``No'' (Theorem \ref{3thm}). \par
 For codimension $r\ge 4$, the family $\Gor(T)$ itself may have several irreducible components \cite{IS}. 
 \par
 A \emph{net of conics} in the projective plane $\mathbb P^2$ is a $3$-dimensional vector space $V$ of conics, elements of $R_2$ for $r=3$. These nets are parametrized by the Grassmanian $\Grass(R_2,3)$. The projective linear group $\Pgl(3)$ acts on $R_1=\langle x,y,z\rangle$, hence on $R={\sf k}[x,y,z]$.
 In \cite{AEI} the first three authors determined the isomorphism classes under $\Pgl(3)$ of nets of conics over an algebraically closed field $\sf k$ not of characteristic $2$ or $3$,  and as well their specializations. These are set out in our  Figure~\ref{1fig} below, based on \cite[Tables~1,5]{AEI}).\footnote{These classes had been determined also by
 C.T.C. Wall - who considered as fields both the complexes and the reals, and also gave an account of specializations \cite{Wall};  A. Conca gives further information about the nets \cite{Co}. N. Onda - apparently unaware of previous results - gave recently an independent determination of the isomorphism classes over an algebraically closed field of characteristic not $2$ or $3$, without discussion of specialization \cite{Onda}.  See \cite[Appendices  A,B]{AEI} for some historical account, and discussion of related problems, in particular \cite[Table 10, Appendix B]{AEI} for a concordance of the results of \cite{Co,Wall, AEI} classifying nets of conics.}  A detailed explanation of Figure \ref{1fig} is in Section \ref{netssec}, page \pageref{organize1fig} below - this is needed to fully understand the references to it in the Introduction.
\par We will denote by $\Gor_{\#6a}(T)$, for example, the family of Artinian Gorenstein algebras $A=R/I$ of Hilbert function $T$, such that the net $V=I_2$ is in the isomorphism class \#6a of Figure \ref{1fig}, that is $I_2\cong \langle xy,xz,yz\rangle$ under $\Pgl(3)$-action. Equivalently, $\Gor_{\#6a}(T)$ parametrizes those  $A=R/I$ for which $I\cong\Ann F, F=X^j+\alpha_1Y^j+\alpha_2Z^j$, for some $\alpha_1,\alpha_2\in \sf k$.\footnote{When $\sf k$ is closed under $j$-th roots, we may assume that $F\cong X^j+Y^j+Z^j$ after a coordinate change replacing $Y$ by $\alpha_1^{1/j}Y$, $Z$ by $\alpha_2^{1/j}Z$.}
 \vskip 0.2cm\noindent
 We recall from \cite[Section 6.3]{AEI},
 the \emph{scheme length} $\ell(V)$ of a net of conics $V$ is the length of the scheme $\Proj (R/(V))$; when finite this is just the integer $a$ for which the Hilbert function 
 \begin{equation}\label{leqn}
 H(R/(V))=(1,3,3,\ldots,\overline{a})
 \end{equation} has an infinite tail $\overline{a}=(a,a,\ldots,a,\ldots)$ of $a$'s. The scheme $\Z_V$ determined by $V$ is empty if $\ell(V)=0$; it is a finite set of $\ell(V)$ points (counted with their multiplicities if a point is non-reduced) when $1\le \ell(V)\le 3$; and is a line in the case $V\cong \langle x^2,xy,xz\rangle$ (\#2a in Figure \ref{1fig}) - we say then $\ell(V)=\infty$. \par

Our first main result is 
 \begin{thm}\label{1thm}  Let $T=(1,3^k,1), k\ge 2$. When $ k=2$, the family $G_T$ of graded quotients of $R={\sf k}[x,y,z]$ having Hilbert function $T$, is the Zariski closure of the family $CI_T$ of graded complete intersection quotients of $R$ having Hilbert function $T$; and $G_T$ has dimension $9$.\par When $k\ge 3$ then $G_T$ is in the closure of $\Gor_{\#6a}(T)$, and $G_T$ has dimension eight.
 \par In each case $G_T\subset \overline{\Gor(T)}$ and is irreducible.
 \end{thm}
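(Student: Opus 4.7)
The plan is to analyze any $A = R/I \in G_T$ through its degree-two component $V = I_2 \in \Grass(R_2, 3)$, a net of conics, and then use the classification of nets in Figure~\ref{1fig} together with explicit one-parameter degenerations to reduce every element of $G_T$ to the Gorenstein stratum identified in the theorem.

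First I would determine which nets can occur as $I_2$ for $A \in G_T$. Since $(V) \subseteq I$, we must have $H(R/(V))_i \geq H(A)_i$ for every $i$, which for $T = (1, 3^k, 1)$ forces $H(R/(V))_i \geq 3$ for $1 \leq i \leq k$ and in particular $\ell(V) \geq 3$. A glance at Figure~\ref{1fig} shows that the open stratum of admissible nets is the $\Pgl(3)$-orbit of $V_0 = \langle xy, xz, yz\rangle$ (class \#6a) when $k \geq 3$, and the open orbit of generic complete-intersection nets when $k = 2$; every other admissible $V$ lies in the closure of that orbit. Over $V_0$ the fiber admits an easy description via Macaulay duality: one has $I_i = R_{i-2} \cdot V_0$ for $2 \leq i \leq k$, and $I_{k+1}$ is determined by the vanishing of a dual form $F = \alpha_1 X^{k+1} + \alpha_2 Y^{k+1} + \alpha_3 Z^{k+1}$ with all $\alpha_i \neq 0$, contributing a $2$-parameter family modulo scalar and the 2-dimensional torus stabilizing $V_0$. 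Combined with the $6$-dimensional orbit of $V_0$ in $\Grass(R_2, 3)$ this gives $\dim \Gor_{\#6a}(T) = 8$; the analogous count for $k = 2$, where no extra data beyond $V$ is needed because the CI ideal equals $(V)$, yields $\dim CI_T = \dim \Grass(R_2,3) = 9$.

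Next I would lift each net specialization $V_0 \rightsquigarrow V$ in the Grassmannian to a specialization of graded algebras. Concretely, given $A \in G_T$ with net $V$, construct a one-parameter family $V(t) \in \Grass(R_2, 3)$ with $V(1) = V_0$ and $V(0) = V$, together with compatible dual data $F(t)$ (for $k \geq 3$) or a perturbation of complete-intersection generators (for $k = 2$), yielding a flat family of algebras $A(t) \in G_T$ with $A(1) \in \Gor_{\#6a}(T)$ (respectively $CI_T$) and $A(0) = A$. The inclusion $G_T \subseteq \overline{\Gor_{\#6a}(T)} \subseteq \overline{\Gor(T)}$ is then immediate, as is irreducibility of $G_T$, since it is the closure of a single irreducible family parametrized by an open Zariski subset.

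The main obstacle is the lifting step. A non-Gorenstein $A \in G_T$ may possess socle elements in intermediate degrees, which translate to extra generators of $I$ in degrees strictly less than $k+1$; no analogous generators exist for a generic member of $\Gor_{\#6a}(T)$. The deformation must therefore exhibit these extra generators as limits of relations that are redundant for $t \neq 0$, all while maintaining the Hilbert function equal to $T$ along the entire family. Carrying this out uniformly across the strata below $V_0$ in Figure~\ref{1fig} (and, for $k = 2$, across all admissible CI-limit nets), and verifying flatness at each step, is where the bulk of the analysis will lie.
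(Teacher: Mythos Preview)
Your overall architecture matches the paper's: stratify $G_T$ by the net $V=I_2$, identify the open stratum (CI nets for $k=2$, class \#6a for $k\ge 3$), and show every other stratum lies in its closure. The dimension counts are correct. But there are two real gaps.

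First, the constraint ``$\ell(V)\ge 3$'' is simply false for $k=2$: when $T=(1,3,3,1)$ the only requirement on $V$ is $H(R/(V))_3\ge 1$, so every scheme length $0,1,2,3,\infty$ occurs (you seem to realize this when you name the CI stratum as open, but the derivation is misstated). For $k\ge 3$ you also omit that $\ell(V)=\infty$ (class \#2a) is admissible; there the ideal has an extra degree-$3$ generator $f\in{\sf k}[y,z]_3$ and is not captured by the ``dual data $F(t)$'' alone.

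Second, and more seriously, the lifting you propose---deform the net along $V(t)$ and carry ``compatible dual data''---is precisely the obstacle you name, and the paper does \emph{not} proceed this way. For $V$ of scheme length three the key observation (Theorem~\ref{schl3thm}) is that $I_i=(V)_i$ for $2\le i\le j-1$, so $A$ is determined by the pair $(V,\langle F\rangle)$ with $F\in((V)_j)^\perp$; one then keeps $V$ \emph{fixed} and deforms $F$ to a generic element of that three-dimensional space, which is Gorenstein (except for \#2b, handled via \#4). No lifting along a moving net is needed. For $k=2$ with $\ell(V)\in\{1,2\}$ there is no such structural shortcut, and the paper instead writes down explicit one-parameter CI deformations case by case (Sections~\ref{def2sec}--\ref{defsch3sec}), after first reducing within each fixed scheme length via Theorem~\ref{scheme2deformthm}. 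Class \#2a is handled by a separate geometric argument (Proposition~\ref{2aprop}): deform the length-$3$ scheme on the line $x=0$ to three non-collinear points of $\mathbb P^2$, landing in \#6a. Your uniform scheme would require producing flat families of algebras across scheme-length jumps of the net, which (as Remark~\ref{nondeformrem} shows) does not follow from net specialization and can fail on dimension grounds.
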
\noindent
 We show this below in Theorem \ref{schl3thm} (scheme length of $V=I_2$ is three), Theorems \ref{k=2GTthm} (case $k=2$) and \ref{kge3thm} (case $k\ge 3$).\vskip 0.2cm
 
In our second main result we show that the pattern of specializations of algebras $A=R/I$ having Hilbert function $T$ corresponds to that for the corresponding nets of conics $V=I_2$,  provided we stay within constant scheme-length families of conics. In the following Theorem $V,V^\prime$ are nets as in Figure \ref{1fig} with $V^{\prime}$ specializing to $V$ according to the arrows of Figure \ref{1fig}.
 \begin{thm}\label{2thm} Assume that the net $V\in \Grass(R_2,3)$ is in the Zariski closure of nets isomorphic to $V^\prime$, and that the scheme length $\ell(V)=\ell(V^\prime)$ and neither $V$ nor $V^\prime$ is \#2a. Let $A=R/I$ be an algebra of Hilbert function $T=(1,3^k,1)$ such that $I_2\cong V$. Then $A$ is in the closure of a family $A(x)=R/I(x)$ of algebras having $I_2(x)\cong V^\prime$, that is $A=\lim_{x\to x_0} A(x), x\in X\backslash x_0$. We may take $X$ to be a curve.
 \end{thm}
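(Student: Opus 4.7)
The plan is to construct a flat family of algebras over a curve $X$ by first deforming the net $V$, and then lifting $A$ via Macaulay duality.

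\emph{Stage 1 (family of nets).} After applying an element of $\Pgl(3)$ we may assume $I_2 = V$ on the nose. The Zariski-closure hypothesis yields a smooth pointed curve $(X, x_0)$ and a morphism $\varphi : X \to \Grass(R_2, 3)$ with $\varphi(x_0) = V$ and $\varphi(x) \in \Pgl(3) \cdot V'$ for $x \ne x_0$; set $V(x) := \varphi(x)$.

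\emph{Stage 2 (flatness of $B(x) := R/(V(x))$).} The heart of the argument is to prove that the graded Hilbert function $H(B(x))_j$ is independent of $x$ in every degree. By upper semicontinuity of $\dim (V(x))_j$, this reduces to the single check $H(R/(V))_j = H(R/(V'))_j$. For nets not of type $\#2a$, the quotient $R/(V)$ has Hilbert function of the shape $(1, 3, 3, h_3, h_4, \ldots)$ stabilizing at $\ell(V)$, so the hypothesis $\ell(V) = \ell(V')$ pins down the tail; the remaining values $h_3, h_4, \ldots$ are then read off from the explicit classification of nets in Figure~\ref{1fig} and shown to coincide along every specialization arrow inside a fixed-$\ell$ stratum. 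Once this is done, each graded annihilator $W_j(x) := ((V(x))_j)^{\perp} \subseteq S_j$ has constant dimension $H(B(x))_j$ and forms a vector subbundle of $X \times S_j \to X$.

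\emph{Stage 3 (lift via inverse systems and check of Hilbert function).} Write $I^{\perp} = R \circ F + \sum_{i=1}^m R \circ G_i$, with $F \in S_{k+1}$ dual to the top socle and $G_i \in S_{d_i}$, $d_i \le k$, dual to any lower socle generators (vacuous when $A$ is Gorenstein). Since $I \supseteq V$, one has $F \in W_{k+1}(x_0)$ and $G_i \in W_{d_i}(x_0)$. As the $W_j(x)$ are locally free on the smooth curve $X$, after possibly shrinking $X$ we extend $F$ and the $G_i$ to regular sections $F(x) \in W_{k+1}(x)$ and $G_i(x) \in W_{d_i}(x)$ with the prescribed values at $x_0$. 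Set $N(x) := R \circ F(x) + \sum_i R \circ G_i(x)$, $I(x) := \Ann_R N(x)$, $A(x) := R/I(x)$. The inclusion $I(x) \supseteq (V(x))$ is automatic. Because $H(A(x))_j = \dim N(x)_j$ is the rank of a family of linear maps depending regularly on $x$, it is lower-semicontinuous, giving $H(A(x))_j \ge T_j$ in a neighborhood of $x_0$. In top degree $N(x)_{k+1} = \langle F(x) \rangle$ is one-dimensional; in the range $1 \le j \le k$, the upper bounds $H(A(x))_j \le H(B(x))_j$ (from $I(x) \supseteq (V(x))$ together with Stage 2) and $H(A(x))_j \le \dim R_{k+1-j}$ (from the dimension of the contraction target) combine to force equality $H(A(x))_j = T_j$. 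Restricting $X$ to the nonempty open locus where $F(x) \ne 0$ and $H(A(x)) = T$, a dimension count gives $I(x)_2 = V(x) \cong V'$ for $x \ne x_0$, completing the construction.

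The principal obstacle is Stage~2. The hypothesis $\ell(V) = \ell(V')$ controls only the eventual stable value of $H(R/(V))$, so the coincidence of the full Hilbert function along each specialization must be extracted from the explicit classification of nets in Figure~\ref{1fig} and the supporting tables of \cite{AEI}. The exclusion of $\#2a$ is precisely what rules out degeneration to the infinite-scheme-length case, in which the flatness of $B(x)$ would break down.
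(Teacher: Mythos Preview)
Your argument is correct and complete, but it takes a route quite different from the paper's. The paper (Theorem~\ref{scheme2deformthm}, together with the scheme-length-three case handled in Theorem~\ref{schl3thm}) works entirely on the $R$-side: having observed that $R_j/(V_w)_j$ has constant dimension $\ell$ along the family (this is exactly your Stage~2), it simply \emph{keeps the extra ideal generators fixed}. That is, writing $I = (V, f, g, \ldots, \m^{j+1})$ with $f, g$ representing a basis of $I_j/(V)_j$, one sets $I(w) = (V(w), f, g, \ldots, \m^{j+1})$ and notes that the condition ``$f, g$ remain linearly independent modulo $(V(w))_j$'' is Zariski-open and satisfied at $w_0$. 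No inverse systems, no section-extension, no semicontinuity argument is needed; the constancy of $\dim R_j/(V(w))_j$ does all the work. Your approach dualizes: you extend the inverse-system generators $F, G_i$ to sections of the dual bundles $W_j(x)$ and then run a semicontinuity argument to recover the Hilbert function. This is more elaborate but has the virtue of making the role of the vector-bundle structure of $W_j(x)$ explicit.

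One small correction: in Stage~3 the stated bound $H(A(x))_j \le \dim R_{k+1-j}$ is not valid once lower socle generators $G_i$ with $d_i \ge j$ are present, since these contribute to $N(x)_j$ beyond the image of $R_{k+1-j}$ under contraction by $F(x)$. Fortunately this bound is redundant: the inclusion $N(x)_j \subseteq W_j(x)$ already gives $H(A(x))_j \le H(B(x))_j$, and in the relevant range $1 \le j \le k$ one has $H(B(x))_j = 3 = T_j$ (for $k=2$ this is immediate; for $k\ge 3$ the hypothesis forces $\ell=3$, whence $H(B(x))_j = 3$ for all $j\ge 1$). So the argument stands once you drop the spurious second bound.
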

 We show Theorem \ref{2thm} in Theorem \ref{schl3thm} (case scheme length of $V$ is three - including the case $k\ge 3$), and in Theorem \ref{scheme2deformthm} (case $k=2$).
 Remark \ref{nondeformrem} shows that an analogous statement to Theorem~\ref{2thm} does not hold when the scheme lengths of $V, V^\prime$ are not the same.\par
We show in Theorem \ref{small2compprop},
\begin{thm}[Smallest $G_T$ not in $\overline{\Gor(T)}$]\label{3thm} Let $5\le a\le 6$ and $T=(1,3,a,3,1)$, 
 or let $T=(1,3,6,6,3,1)$. Then $G_T$ has two irreducible components, $\overline{\Gor(T)}$, and a second, larger component $U_T$ (Definition~\ref{UTdef}), whose dimensions are given in Figure \ref{compfig}. 
 \end{thm}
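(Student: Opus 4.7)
The plan is to treat the three Hilbert functions uniformly by pairing Diesel's irreducibility theorem for $\Gor(T)$ with an explicit construction of a second component $U_T$ whose generic member is non-Gorenstein, and then showing that no further component appears. Since Diesel \cite{Di} gives that $\Gor(T)$ is irreducible of known dimension in codimension three, $\overline{\Gor(T)}$ is automatically one irreducible component of $G_T$, with the dimension recorded in Figure~\ref{compfig}; the remaining work concerns $U_T$ and exhaustivity.

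For the construction of $U_T$ (Definition~\ref{UTdef}), I would describe its generic element via Macaulay duality as the algebra whose dual module $M\subset S$ is generated by two forms $F,G$ of different degrees---$F$ in the top socle degree of $T$ and $G$ strictly below---chosen so that $\dim R\circ\langle F,G\rangle$ realizes the prescribed values of $T$ in each graded piece. Writing such $(F,G)$ explicitly in each of the three cases $T=(1,3,5,3,1)$, $(1,3,6,3,1)$, and $(1,3,6,6,3,1)$ yields a rational parametrization of $U_T$ from an open subset of a product of projective spaces, simultaneously establishing irreducibility and computing $\dim U_T$ as claimed in Figure~\ref{compfig}. The invariant separating $U_T$ from $\overline{\Gor(T)}$ is that the generic element of $U_T$ has a socle summand in degree strictly less than the top degree of $T$; since $\dim(\Soc A\cap A_i)$ is upper semicontinuous in families and vanishes identically for $i$ below the top degree on $\Gor(T)$, we obtain $U_T\not\subset\overline{\Gor(T)}$. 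The dimension inequality from Figure~\ref{compfig} then also rules out $\overline{\Gor(T)}\subset U_T$, so the two are distinct irreducible components.

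The central obstacle is exhaustivity: proving $G_T=\overline{\Gor(T)}\cup U_T$. My strategy is to stratify $G_T$ by the graded socle vector $(s_0,s_1,\ldots,s_j)$ of $A=R/I$, where $s_i=\dim(\Soc A\cap A_i)$; this vector is upper semicontinuous, so each stratum is locally closed, and its total sum equals the minimal number of Macaulay dual generators of $A$. The open stratum with socle concentrated in the top degree is exactly $\Gor(T)$, whose closure is $\overline{\Gor(T)}$. For each other socle vector compatible with $T$, I would write a direct Macaulay-dual parametrization showing that the corresponding stratum lies inside the irreducible family $U_T$. For $T=(1,3,a,3,1)$ with $a=5,6$ the list of admissible socle vectors is short and each case can be placed inside $U_T$ by inspection; the hardest case is expected to be $T=(1,3,6,6,3,1)$, where the longer Hilbert function permits several a priori socle types and one must carefully collapse them into a single irreducible family, using a bookkeeping analogous to the codimension-four stratification in \cite{IS}. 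Once each non-Gorenstein socle stratum is shown to lie in $\overline{U_T}$, and the dimensions on both sides match Figure~\ref{compfig}, the proof concludes.
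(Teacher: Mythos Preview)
Your proposal has two concrete gaps and one over-reach.

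\textbf{Misreading of $U_T$.} Definition~\ref{UTdef} sets $U_T=U_{T,1}$: the family where the top-degree dual generator satisfies $I_j^\perp=\langle L^{[j]}\rangle$ for some linear form $L$. For $T=(1,3,6,3,1)$, say, an element of $U_T$ has dual module generated by $L^4$ together with a three-dimensional space $\mathcal B\subset S_3$ containing $L^3$; that is three minimal dual generators, not two. Your ``two forms $F,G$'' description forces $\dim R_1\circ F=2$ (otherwise $H(A)_3\ne 3$ or $A$ is Gorenstein), which is the paper's $U_{T,2}$, a different locus. The paper's dimension count (e.g.\ $\dim U_T=2+\dim\Grass(9,2)=16$ for $a=6$) depends essentially on the $L^{[j]}$ description and the ensuing Grassmannian parametrization; your parametrization by pairs $(F,G)$ would give different numbers.

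\textbf{Semicontinuity runs the wrong way.} Upper semicontinuity of $\dim(\Soc A\cap A_i)$ means this dimension can only \emph{increase} under specialization. Hence limits of Gorenstein algebras may well acquire lower-degree socle, so your argument does \emph{not} yield $U_T\not\subset\overline{\Gor(T)}$. Likewise, since $\dim U_T>\dim\Gor(T)$, the dimension inequality cannot exclude $\overline{\Gor(T)}\subset\overline{U_T}$; it excludes the opposite inclusion. You have both needed ingredients but assigned them to the wrong directions. The paper argues: (i) $\dim U_T>\dim\Gor(T)$ gives $U_T\not\subset\overline{\Gor(T)}$; (ii) Lemma~\ref{closurelem} gives $\overline{U_T}\cap\Gor(T)=\emptyset$ because ``$\dim R_1\circ F_j\le 1$'' is a closed rank condition that no Gorenstein algebra with $H_1=3$ can satisfy, whence $\overline{\Gor(T)}\not\subset\overline{U_T}$.

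\textbf{Exhaustivity is not claimed.} The paper's detailed statement (Theorem~\ref{small2compprop}) and its proof establish only that $G_T$ has \emph{at least} two irreducible components; the phrasing in the Introduction is informal. Your socle-vector stratification program to show $G_T=\overline{\Gor(T)}\cup\overline{U_T}$ is therefore not required, and the paper makes no attempt at it.
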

 In Section \ref{isomsec} we study $G_T$: we classify graded algebras of Hilbert function $T=(1,3^k,1)$ up to isomorphism, over an algebraically closed field $\sf k$. We discuss briefly their specializations. Given a net $V\subset R_2$ we denote by $\Gor_V(T)$ the
 family of Gorenstein algebra quotients $A=R/I$ with $I_2=V$, of Hilbert function $T$; and by $G_V(T)$ the family of Artinian algebra quotients $A=R/I$ with $I_2=V$, of Hilbert function $T$. We denote by $\Gor_{\{V\}}(T)$ and
 $G_{\{V\}}(T)$ the corresponding families where $I_2\cong V$ under $\Pgl(3)$ action.\par
 In Section \ref{isomsec}, we show first the following result (Theorem 4.1). \vskip 0.2cm\noindent
 \begin{thm}[Dimension theorem]\label{4thm}   Let $T=(1,3^k,1), k\ge 2$. Then $G_V(T)$ and $G_{\{V\}}(T)$ are irreducible.
 We let $d= d(V)$ be the dimension of the family of nets of conics that are $\Pgl(3)$ equivalent to $V$.\begin{enumerate}[(i).]
\item Let $T=(1,3,3,1).$  Then the family of Artinian algebras $G_{\{V\}}(T)$ has dimension $d$ if $\ell=0$, dimension $d+\ell-1$ if $1\le \ell\le 3$; and has dimension $5$ if $\ell=\infty$ (the case $V=\#$2a).
\item Let $T=(1,3^k,1)$ with $k\ge 3$. Then  $G_{\{V\}}(T)$ has dimension $d+2$ if $\ell=3$ and dimension $7$ if 
$\ell=\infty$ (case V=\#2a), and is empty otherwise.
\end{enumerate}
 \end{thm}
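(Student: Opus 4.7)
The approach is to parametrize $G_V(T)$ explicitly by constructing the graded ideal $I$ one degree at a time. Each $I_i$ must contain $(V)_i=R_{i-2}V$ (of codimension $H(R/(V))_i$ in $R_i$), have dimension $\dim R_i-T_i$, and satisfy $R_1I_{i-1}\subseteq I_i$. The quantity $H(R/(V))_i$ is controlled by $\ell=\ell(V)$ via equation (\ref{leqn}); a direct check from the classification of Figure~\ref{1fig} shows that for finite $\ell$ one has $H(R/(V))_i=\ell$ for every $i\ge 3$, except in the complete-intersection case $\ell=0$ where $H(R/(V))_3=1$; and that $H(R/(V))_i=i+1$ for all $i\ge 2$ when $V=\#$2a.

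For part (i), with $T=(1,3,3,1)$, only $I_3$ requires a choice (then $I_j=R_j$ for $j\ge 4$ is forced and the ideal condition is automatic). The set of $9$-dimensional subspaces of $R_3$ containing $R_1V$ is the Grassmannian $\Gr(H_3-1,H_3)\cong \mathbb P^{H_3-1}$, where $H_3=H(R/(V))_3$. Reading off $H_3$ case by case gives $\dim G_V(T)=0,0,1,2$ for $\ell=0,1,2,3$ respectively, and $\dim G_V(T)=3$ for $V=\#$2a, matching the formulas in (i).

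For part (ii), with $T=(1,3^k,1)$ and $k\ge 3$: in degrees $3\le i\le k$ the requirement $\dim I_i=\dim R_i-3\ge \dim(V)_i$ forces $H(R/(V))_i\ge 3$, excluding finite $\ell\le 2$ and making $G_V(T)$ empty there. For $\ell=3$ the dimensions match exactly, so $I_i=(V)_i$ is forced for $3\le i\le k$; the sole remaining choice is a hyperplane $I_{k+1}\supseteq (V)_{k+1}$ of $R_{k+1}$, a $\mathbb P^2$ of dimension $2$. For $V=\#$2a set $B=R/(V)$; since $\bar x\cdot B_{\ge 1}=0$, the quotient $B$ identifies with $\mathsf k[y,z]$ in degrees $\ge 2$, and the image $J=I/(V)\subseteq \mathsf k[y,z]$ is an ideal with $\dim J_i=i-2$ for $3\le i\le k$ and $\dim J_{k+1}=k+1$. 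Using that $\mathsf k[y,z]$ is a domain, $J=(g)$ through degree $k$ for a unique (up to scalar) cubic $g\in \mathsf k[y,z]_3$, and $J_{k+1}$ is obtained by adjoining a $2$-dimensional subspace of the $3$-dimensional cokernel $\mathsf k[y,z]_{k+1}/(g)_{k+1}$. Hence $G_V(T)$ is a $\mathbb P^2$-bundle over $\mathbb P(\mathsf k[y,z]_3)=\mathbb P^3$, irreducible of dimension $5$.

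Irreducibility of $G_V(T)$ follows in every case from the parameter space being a Grassmannian or a Grassmannian bundle over a projective space; and $G_{\{V\}}(T)$, the $\Pgl(3)$-saturation of $G_V(T)$, fibres over the orbit $\Pgl(3)\cdot V$ (dimension $d$) with irreducible fibre $G_V(T)$, so is itself irreducible of dimension $d+\dim G_V(T)$. The main obstacle is the analysis of $V=\#$2a with $k\ge 3$: one uses the domain property of $\mathsf k[y,z]$ to show that $J=(g)$ throughout degrees $3\le i\le k$ (so no extra generators intrude), and checks that any choice of $2$-dimensional extension at degree $k+1$ is compatible with $I_{k+2}=R_{k+2}$ under $R_1$-multiplication, which holds automatically since $J_{k+2}=\mathsf k[y,z]_{k+2}$ is already full.
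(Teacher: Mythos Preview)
Your argument is correct and follows essentially the same route as the paper's proof of Theorem~\ref{dimthm}: fix $V$, observe that the remaining graded pieces of $I$ are parametrized by a projective space (or, in the \#2a case with $k\ge 3$, a $\mathbb P^2$-bundle over $\mathbb P^3$ coming from the choice of the cubic $f\in{\sf k}[y,z]_3$ and then the degree-$j$ part), and pass to $G_{\{V\}}(T)$ by fibering over the $\Pgl(3)$-orbit of $V$. Your write-up is in fact a bit more explicit than the paper's in two places --- you verify directly that $G_V(T)=\emptyset$ for finite $\ell\le 2$ when $k\ge 3$, and you use the domain property of ${\sf k}[y,z]$ to justify that the image ideal is principal through degree $k$ in the \#2a case --- but these are exactly the points the paper handles more tersely (via Remark~\ref{kge3rem} and the line ``$f$ can have only trivial relations with the elements of $V$''), so the two proofs coincide.
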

 We then show our main classification result for Artinian quotients of $R$. Recall that the nets of scheme length zero determine CI algebras: these are the nets of \#8b, and the three nets \#8c,\#7c, and \#6d.
 
 \begin{thm}\label{5thm} Let $T=(1,3^k,1), k\ge 2$. We determine the isomorphism classes of algebras in $G_V(T)$, the family of Artinian quotients $A=R/I$ in $G_T$ for which $I_2=V$. \par
 (i). The number of classes for $V$ of scheme lengths $\ell=2$ and $3$ satisfy
 \end{thm}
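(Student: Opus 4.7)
The plan is to reduce the classification of isomorphism classes in $G_V(T)$ to an orbit problem for the stabilizer $\Stab(V) \subset \Pgl(3)$ on an explicit projective parameter space, and then to treat each $\Pgl(3)$-orbit of nets $V$ with $\ell(V) \in \{2,3\}$ from Figure~\ref{1fig} case-by-case.

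\textbf{Parametrization.} Set $A' = R/(V)$, with $H(A') = (1, 3, 3, \ldots, \overline{\ell})$. Specifying $A = R/I \in G_V(T)$ with $I_2 = V$ is equivalent to specifying the kernel $W = \ker(A'_{k+1} \twoheadrightarrow A_{k+1})$, a codimension-one subspace of $A'_{k+1}$, subject to the open condition $R_1 \cdot W = A'_{k+2}$ (equivalently $I_{k+2} = R_{k+2}$). By Macaulay duality on the divided-power ring, this is the same as choosing a line in $(V^\perp)_{k+1} \subset S_{k+1}$, where $V^\perp$ denotes the annihilator under contraction. For $\ell = 3$, $\dim (V^\perp)_{k+1} = 3$, so $G_V(T)$ is an open subvariety of $\mathbb{P}((V^\perp)_{k+1}) \cong \mathbb{P}^2$; the Gorenstein sublocus $\Gor_V(T)$ is the further open subset where $\Ann(F)_2 = V$. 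For $\ell = 2$ (which by Theorem~\ref{4thm} forces $k = 2$), the parameter space is a $\mathbb{P}^1$.

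\textbf{Stabilizers and orbits.} For each net $V$ in the relevant classes of Figure~\ref{1fig}, I read off $\Stab(V) \subset \Pgl(3)$ directly from the normal form; these computations are needed in any case for Theorem~\ref{4thm}. The induced action of $\Stab(V)$ on $(V^\perp)_{k+1}$ is transparent because of the geometric identification: $(V^\perp)_{k+1}$ is spanned by the $(k+1)$-st divided powers of the points (or infinitely-near points) of the length-$\ell$ scheme $\mathfrak{Z}_V$, so a generator $F \in (V^\perp)_{k+1}$ is a linear combination of these "basis points," and its $\Stab(V)$-orbit is determined by combinatorial data (which basis points carry nonzero mass) together with invariant cross-ratios when $\Stab(V)$ fixes some directions in $\mathbb{P}((V^\perp)_{k+1})$. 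For the most symmetric class \#6a of scheme length $3$, $\Stab(V)$ is a torus extended by the $S_3$ permuting the three points of $\mathfrak{Z}_V$, and the orbit count reduces to enumerating multi-subsets of a 3-element set up to symmetry. For the less symmetric classes with non-reduced $\mathfrak{Z}_V$ (\#7a, \#7b, \#8a and the scheme-length-2 classes), $\Stab(V)$ contains a unipotent factor acting along the infinitely-near direction, collapsing some orbits; the bookkeeping must track the jet structure on $\mathfrak{Z}_V$.

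\textbf{Main obstacle.} The case analysis is extensive, as each relevant net of Figure~\ref{1fig} must be handled individually. For each net I will (i) write down $\Stab(V)$ explicitly, (ii) write a basis of $(V^\perp)_{k+1}$ compatible with the stratification of $\mathfrak{Z}_V$ by reduced and infinitely-near points, (iii) enumerate the $\Stab(V)$-orbits on $\mathbb{P}((V^\perp)_{k+1})$ via incidence with the distinguished sub-loci determined by partial support on $\mathfrak{Z}_V$, and (iv) verify with explicit $F$ that the listed orbits are pairwise non-isomorphic. The hardest step will be, for the intermediate cases, separating orbits that agree on support-type invariants but are distinguished by stabilizer-invariant cross-ratios; this requires choosing the invariants carefully so that they are visibly preserved by $\Stab(V)$ and recoverable from the algebra $A$ (e.g.\ from the structure of the Macaulay dual generator up to scalar). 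Summing the orbit counts across the relevant nets yields the numerical statement of part~(i).
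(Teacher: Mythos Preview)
Your overall strategy---reduce to the action of $\Stab(V)\subset\Pgl(3)$ on the projective parameter space $\mathbb{P}\bigl((V^\perp)_{j}\bigr)$ (or on $\mathbb{P}(R_3/R_1V)$ when $\ell=2$) and enumerate orbits net-by-net---is exactly what the paper does in Theorems~\ref{sch2algthm} and~\ref{GTclasseslength3}. So at the level of method there is no difference.

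There are, however, two concrete slips you should correct before carrying out the case analysis.

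First, your list of ``less symmetric classes with non-reduced $\mathfrak{Z}_V$'' is wrong. You write ``\#7a, \#7b, \#8a and the scheme-length-2 classes,'' but \#7b and \#8a have scheme length~$1$ and are irrelevant to part~(i) of the theorem (for $\ell\le 1$ the algebra is simply $R/(V,\m^4)$, a single class per net). The scheme-length-$3$ nets other than \#6a are \#5a, \#4, and \#2b; these are the ones whose $\mathfrak{Z}_V$ is non-reduced and whose stabilizers carry the unipotent factor you describe. The scheme-length-$2$ nets are \#7a, \#6b, \#6c, \#5b. You need to run your orbit computation on precisely these eight classes. In particular, for \#6b the stabilizer acts only by overall scaling on $R_3/R_1V\cong\langle y^3,z^3\rangle$, so the orbit space is a full $\mathbb{P}^1$: this is where the $\infty$ in the table comes from, and your plan sketch reads as though you expect finitely many orbits throughout.

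Second, the ``open condition $R_1\cdot W=A'_{k+2}$'' you impose is not needed and would in fact discard legitimate isomorphism classes. The ideal is $I=(V)+W+\m^{j+1}$, and one always adjoins $\m^{j+1}$; every choice of codimension-one $W\subset A'_{j}$ (equivalently every $[F]\in\mathbb{P}((V^\perp)_j)$) gives an algebra in $G_V(T)$. The classes where your open condition fails are exactly those requiring an extra generator in degree $j+1$ (e.g.\ \#6a.iii, \#5a.v--vi, \#4.iii, \#2b.iii in the paper), and they must be counted.
\vskip 0.2cm
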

 \qquad\qquad\qquad \qquad\qquad Cases $\ell=2$.  \qquad\quad$\, \vert$\quad \qquad Cases $\ell=3$.\vskip 0.2cm\par
 $\begin{array}{c||cccc|cccc}
 \text{ Class of $I_2$}&\#7a&\#6b&\#6c&\#5b&\#6a&\#5&\#4&\#2b\\\hline
 \text{ \# Isom classes} &6&\infty&3&3 &3&7&3&3.\\
  \end{array}
 $\vskip 0.2cm\par
 There are among the nets $V$ of scheme length 3, exactly four isomorphism classes of Gorenstein algebras in $G_V(T)$, namely
 \#6a.i,\#5a.i, \par\noindent \#5a.ii, and \#4.iii., (see the Betti C table in Figure \ref{BettiCfig}). There are no further non-CI Gorenstein classes of algebras in $G_T$.\footnote{The notation such as \#6a.i for the isomorphism classes is found in Theorem \ref{GTclasseslength3} and the equations listed there.}
 \vskip 0.3cm
 (ii). Classes for the $V=xR_1$ (\#2a) of scheme length $\infty$:\par
 When $T=(1,3,3,1)$: there are 3 classes. For $T=(1,3^k,1), k\ge 3$ \vskip 0.2cm
 and  $I=(V,f,f_5,f_6)$ with $ f\in I_3; f_5,f_6\in I_j$ we have
  for\par $ f=yz(y-z):   \infty^2, \infty$; for $  f=yz^2$:  5 classes; for $f=z^3$: 4 classes.
\vskip 0.2cm\par\noindent
{\bf Note.} Here $\infty^2$ means a two-parameter family of classes, see Corollary \ref{6cor}. Theorem \ref{5thm} summarizes the results of Theorems \ref{sch2algthm}, \ref{GTclasseslength3} and \ref{2aAthm} for $V$ of scheme length two, three and $\infty$, respectively. We also specify the Betti tables for the classes for $V$ of scheme lengths two and three.\vskip 0.2cm
  \par Recall that the family of nets having the form \#8b in Figure \ref{1fig} is a continuous family of complete intersections, whose isomorphism classes are determined by their ${\sf j}$-invariant ${\sf j}(\lambda)$ (see \cite[Footnote 6]{AEI}).\vskip 0.2cm\par\noindent
 {\bf Question.} Fix any net $V\subset R_3$. Do we have continuous families of Artinian algebras $A=R/I$ in $G_T$ with $I_2=V$? \par We answer this:\par
 \setcounter{cor}{5}
 \begin{cor}\label{6cor}  Aside of the complete intersection class of nets \#8b, there are three continuous families of isomorphism classes in these $G_T$. First, when $T=(1,3,3,1)$ and $V\cong \langle x^2,xy,yz+z^2\rangle$ (\#6b),  Then, when $T=(1,3^k,1), k\ge 3$, $V\cong xR_1$ (\#2a),  and $I$ contains $f=yz(y-z)$ we have the two continuous families $I(a)$ and $I(a,b)$ where $I(a,b)=I(a^{-1},a^{-1}b)$ if $a\not=0$ (Equations \eqref{Wab2eq}, \eqref{Wabsigmaeq},\eqref{waeqn}). For all other nets $V$ there are only a finite number of non-isomorphic Artinian algebras having $I_2=V$.
 \end{cor}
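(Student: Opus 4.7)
The plan is a direct case-by-case analysis based on the isomorphism enumerations of Theorem~\ref{5thm}, running over each $\PGl(3)$-orbit of nets $V$ from Figure~\ref{1fig} and recording which yield positive-dimensional moduli of Artinian quotients $A=R/I$ with $I_2=V$ and which yield only finitely many classes. The CI nets \#8b give the well-known one-parameter $\mathsf{j}$-invariant family and are excluded by hypothesis; one then has to show that the only other orbits producing continuous families are \#6b (for $T=(1,3,3,1)$) and \#2a with cubic $yz(y-z)$ (for $k\ge 3$), exactly the three cases named.

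For scheme length $\ell\in\{2,3\}$ the dichotomy is read off directly from the tables in Theorem~\ref{5thm}(i): among the eight relevant orbits the only infinite entry is $\infty$ at \#6b in the $\ell=2$ row, producing the first claimed continuous family for $T=(1,3,3,1)$, while all four entries at $\ell=3$ are finite. The remaining CI orbits at $\ell=0$ (namely \#8c, \#7c, \#6d) and the $\ell=1$ orbits yield only finitely many classes by the structural results of Section~\ref{isomsec}, contributing nothing new.

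The last case is $V=xR_1$ (\#2a, $\ell=\infty$), handled by Theorem~\ref{5thm}(ii). When $T=(1,3,3,1)$ there are three classes, nothing continuous. When $T=(1,3^k,1)$ with $k\ge 3$, the cubic $f\in I_3$ falls, up to the $\PGl(3)$-stabilizer of $V$, into one of $yz(y-z)$, $yz^2$, or $z^3$; the last two give $5$ and $4$ classes respectively, while $f=yz(y-z)$ produces exactly the two parametric families $I(a)$ (equation \eqref{waeqn}) and $I(a,b)$ (equations \eqref{Wab2eq}, \eqref{Wabsigmaeq}), giving the remaining two continuous families.

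The only nontrivial point is the identification $I(a,b)=I(a^{-1},a^{-1}b)$ for $a\neq 0$, asserting that the $(a,b)$-parameter plane is a generic double cover of this component of moduli. This will be the main obstacle: I would establish it by exhibiting an explicit element of the $\PGl(3)$-stabilizer of the pair $(xR_1,\,yz(y-z))$ that, pulled back to the defining generators, swaps $a\leftrightarrow a^{-1}$ and $b\leftrightarrow a^{-1}b$, and then checking that no further identification occurs, so that the family is genuinely two-dimensional. With that verified, the corollary follows by collating the finite/infinite counts of Theorem~\ref{5thm}.
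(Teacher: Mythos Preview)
Your approach is essentially the same as the paper's: both reduce the corollary to the classification carried out in Theorem~\ref{5thm} (equivalently, Theorems~\ref{sch2algthm}, \ref{GTclasseslength3}, \ref{2aAthm}). The paper's own proof is even terser than yours, simply pointing to Conclusion~\ref{6bconclude} for the \#6b family and Theorem~\ref{2aAthm} for the \#2a families; your explicit case-by-case reading of the finite/infinite counts is exactly what those citations unpack.

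One caveat on your final paragraph. The identification $I(a,b)=I(a^{-1},a^{-1}b)$ is already Equation~\eqref{Wabsigmaeq}, obtained from the involution $\sigma:y\leftrightarrow z$, so you are right about the stabilizer element. However, your plan to then check ``that no further identification occurs'' is both unnecessary and, taken literally, would fail: the corollary does not assert that $\sigma$ gives the \emph{only} identification, and in fact the stabilizer of the pair $(xR_1,\,yz(y-z))$ in $\Pgl(3)$ acts on the $(y,z)$-plane through the full $S_3$ permuting the three roots of $yz(y-z)$, so there are further discrete identifications you would discover (for instance $\tau:y\mapsto z-y,\ z\mapsto z$ gives another relation on $(a,b)$). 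What actually matters for the ``genuinely two-dimensional'' claim is only that this stabilizer is \emph{finite}, so the $(a,b)$-family descends to a two-parameter family of isomorphism classes; the paper gets this from the dimension count in Figure~\ref{2aequivdimfig} rather than by enumerating identifications. With that adjustment your argument is complete.
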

 \begin{proof} See Conclusion \ref{6bconclude} for \# 6b and Theorem \ref{2aAthm} for \#2a.  
 \end{proof}
 In Section \ref{prelimsec} we recall some definitions and results about nets of conics. Section 3 is devoted to the deformations of graded algebras to graded Gorenstein algebras; Section \ref{isomsec} is devoted to the classification up to isomorphism of graded algebra quotients of $R$ having Hilbert function $T=(1,3^k,1), k\ge 2$.
 In Section \ref{bettisec} we consider some questions about limits of Betti tables.
\subsubsection{Glossary of terms}
 We list some key terms used. 
 All algebras considered are graded.
  \begin{itemize}
\item A Hilbert function $T$, usually of the form $T=(1,3^k,1)=(1,3,3,\cdots, 3,1)$ with $k$ occurences of $3$.
\item The family $\Gor(T)$ of graded Artinian Gorenstein (AG) algebras $A$, quotients of $R={\sf k}[x,y,z]$, having Hilbert function $H(A)=T$.  This is known to be irreducible.
\item The family $G_T$ of all graded quotients of $R$ having Hilbert function $T$.
\item $\Gor_{\# t}$: the family of AG algebras associated with the entry $t$ of Figure \ref{1fig}.
\item $\CI_T$ when $k=2$: the family of complete intersection quotients of $R$ having Hilbert function $T$.
\item Let $V$ be a net of conics in $R_2$. Then we have (Definition \ref{GVdef}) \par
- $\Gor_V(T)$ is the family of graded AG quotients $A=R/I$ with $I_2=V$ and Hilbert function $H(A)=T$.\par
- $G_V(T)$ is the family of graded Artinian algebra quotients of $R$ having $I_2=V$ and Hilbert function $T$.\par
- $\Gor_{\{V\}}(T)$ and $G_{\{V\}}(T)$ are the corresponding families where $I_2=V$ under $\Pgl(3)$ action.
\end{itemize}
 For example $\Gor_{6a}(T)=\Gor_{\{V\}}(T)$ for $ V=\langle xy, yz,xz \rangle$ (\#6a of Figure \ref{1fig}).
  \tableofcontents
\section{Preliminaries.}\label{prelimsec}
\subsection{Nets of conics.}\label{netssec}
We adapt these notions from \cite{AEI}. Figure \ref{1fig} below combines information
from  \cite[Tables 1,5]{AEI}. Recall that $\Pgl(3)$ acts on the $\sf k$-vector space $\langle x,y,z\rangle$, hence on $R={\sf k}[x,y,z]$. The following definition is so the reader can connect better the Figure \ref{1fig} here with \cite[Table 1]{AEI}; the concepts such as ``associated cubic'' are not essential here - we include them to make the connection with previous work, with the idea they might be potentially relevant to a reader.
\begin{definition} Let $V\subset R_2$ be a subvector space of $\sf k$-dimension three.
The \emph{scheme length} $\ell(V)$ of $V$ is the length of the scheme $\mathfrak Z_V=\Proj(R/(V))$, or zero if $R/(V)$ is a complete intersection (CI), or $\infty$ if $V\cong xR_1$, in which case the associated scheme $\Z_V$ is a line.\par
The \emph{associated cubic} $\Gamma(V)$ of the net of conics $V=\langle v_1,v_2,v_3\rangle$ is the cubic
 $f(a_1,a_2,a_3)$ such that $f=0$ is the locus of elements $a_1v_1+a_2v_2+a_3v_3$ that are decomposable - that is - may be factored into a product of linear forms.
 Here $a_1,a_2,a_3\in R_1$ - are linear.\par
  The \emph{double line} subset $D_2(V)\subset \Gamma(V)$ denotes the set of perfect squares, that is the elements of V having the form $L^2$ for some $L\in R_1$.
 \end{definition}\vskip 0.2cm\noindent
    {\it Organization of  Figure \ref{1fig}:}\par \label{organize1fig}
   In referencing the isomorphism classes of nets in Figure \ref{1fig}, we will use \#7a,\#7b,\#7c to denote respectively the nets having orbit dimension 7, leftmost, middle, and right - so \#7a is the net $\langle
x^2+yz,xy,xz\rangle$, or, used loosely, any net that is $\Pgl(3)$-equivalent to it.~\footnote{The nets in \#8b correspond to $J\phi$ where $\phi$ is a smooth cubic not isomorphic to the harmonic cubic $x^3+y^3+z^3$. Here $\omega$ is a primitive cube root of one. The omitted values $-1,-\omega,-\omega^2$ correspond to singular cubics $\phi$  giving rise to the net in \#6a. The omitted values $0,2,2\omega,2\omega^2$ give a cubic isomorphic to the harmonic cubic. The ${\sf j}(\lambda)$ in
Figure \ref{1fig} \#8b is the ${\sf j}$-invariant giving the isomorphism class of the cubic.  For further details see \cite[Table 1]{AEI}.}\vskip 0.2cm\par\noindent

 In Figure \ref{1fig} we give for each $\Pgl(3)$ orbit of nets of conics
\begin{enumerate}[i.]
\item a basis for a net belonging to it,
\item a diagram indicating the isomorphism type of its associated cubic $\Gamma(V)$, where each point of $\Gamma(V)\cap D_2$ is marked by a circle around it,
\item the scheme length $\ell(V)$ for $V$ in the orbit, as an italicized integer $\it 0,1,2$ or $\it 3$ to the right of the diagram, or as $\infty$ for the net $V$ of \#2a. (This information is not in \cite[Table 1]{AEI}, and we use scheme length extensively here).
\item the dimension of the orbit is indicated by the integer in the left column. 
\end{enumerate}
The arrows oriented downward $V\to V'$ indicate that the orbit of $V'$ is included in the Zariski closure of the orbit of $V$.  The arrows downward from the one-parameter family $V_{\sf j}\to V'$ indicate that the orbit of $V'$ is in the closure of each orbit $V_{\sf j}$, with $\sf j$, the invariant of the cubic, held constant. See \cite[Footnote 6]{AEI} for the $\sf j$ invariant in terms of $\lambda$. \vskip 0.2cm

 \begin{figure}[p]
 \vskip -1.2cm
\tikzset{every picture/.style={line width=0.75pt}}         

\begin{tikzpicture}[x=0.75pt,y=0.75pt,yscale=-0.9,xscale=0.9]

%Shape: Free Drawing [id:dp6557036550611557] 
\draw  [color={rgb, 255:red, 0; green, 0; blue, 0 }  ][line width=0.75] [line join = round][line cap = round] (118.5,46.63) .. controls (108.2,54.88) and (92.71,69.4) .. (81.5,75.63) .. controls (74.02,79.79) and (62.94,77.28) .. (61.5,68.63) .. controls (60.74,64.05) and (64.91,57.47) .. (69.5,56.63) .. controls (77.34,55.21) and (98.25,61.88) .. (106.5,64.63) .. controls (109.05,65.48) and (120.5,68.03) .. (120.5,69.63) ;
%Straight Lines [id:da5520728067982482] 
\draw    (87.78,108.82) -- (87.07,177.48) ;
\draw [shift={(87.05,179.48)}, rotate = 270.59] [color={rgb, 255:red, 0; green, 0; blue, 0 }  ][line width=0.75]    (10.93,-3.29) .. controls (6.95,-1.4) and (3.31,-0.3) .. (0,0) .. controls (3.31,0.3) and (6.95,1.4) .. (10.93,3.29)   ;
%Shape: Circle [id:dp886302734700201] 
\draw   (66,219.32) .. controls (66,205.51) and (77.19,194.32) .. (91,194.32) .. controls (104.81,194.32) and (116,205.51) .. (116,219.32) .. controls (116,233.12) and (104.81,244.32) .. (91,244.32) .. controls (77.19,244.32) and (66,233.12) .. (66,219.32) -- cycle ;
%Straight Lines [id:da7165757519955402] 
\draw    (31,222.57) -- (150.5,223.07) ;
%Straight Lines [id:da06199153609913943] 
\draw    (86.78,276.82) -- (87.04,339.48) ;
\draw [shift={(87.05,341.48)}, rotate = 269.76] [color={rgb, 255:red, 0; green, 0; blue, 0 }  ][line width=0.75]    (10.93,-3.29) .. controls (6.95,-1.4) and (3.31,-0.3) .. (0,0) .. controls (3.31,0.3) and (6.95,1.4) .. (10.93,3.29)   ;
%Straight Lines [id:da24864906495821437] 
\draw    (78.72,351.7) -- (122.72,421.7) ;
%Straight Lines [id:da9827765152519753] 
\draw    (49.37,412.7) -- (130.72,412.7) ;
%Straight Lines [id:da21815236528141368] 
\draw    (53.37,421.7) -- (91.37,351.7) ;
%Shape: Circle [id:dp792616711148017] 
\draw   (305.23,64.2) .. controls (305.23,55.43) and (312.34,48.32) .. (321.12,48.32) .. controls (329.89,48.32) and (337,55.43) .. (337,64.2) .. controls (337,72.97) and (329.89,80.08) .. (321.12,80.08) .. controls (312.34,80.08) and (305.23,72.97) .. (305.23,64.2) -- cycle ;
%Curve Lines [id:da25253573944875596] 
\draw    (343.9,107.08) .. controls (383.9,77.08) and (312.9,48.08) .. (352.9,18.08) ;
%Shape: Free Drawing [id:dp07039864808695673] 
\draw  [color={rgb, 255:red, 0; green, 0; blue, 0 }  ][line width=0.75] [line join = round][line cap = round] (587.5,46.63) .. controls (577.2,54.88) and (561.71,69.4) .. (550.5,75.63) .. controls (543.02,79.79) and (531.94,77.28) .. (530.5,68.63) .. controls (529.74,64.05) and (533.91,57.47) .. (538.5,56.63) .. controls (546.34,55.21) and (567.25,61.88) .. (575.5,64.63) .. controls (578.05,65.48) and (589.5,68.03) .. (589.5,69.63) ;
%Shape: Circle [id:dp3621547273126089] 
\draw   (563.23,62.7) .. controls (563.23,59.17) and (566.09,56.32) .. (569.62,56.32) .. controls (573.14,56.32) and (576,59.17) .. (576,62.7) .. controls (576,66.23) and (573.14,69.08) .. (569.62,69.08) .. controls (566.09,69.08) and (563.23,66.23) .. (563.23,62.7) -- cycle ;
%Shape: Free Drawing [id:dp9446124461862293] 
\draw  [color={rgb, 255:red, 0; green, 0; blue, 0 }  ][line width=0.75] [line join = round][line cap = round] (306.53,223.98) .. controls (318.24,223.98) and (329.89,216.2) .. (336.15,210.96) .. controls (339.61,208.07) and (348.19,201.35) .. (348.19,199.18) ;
%Shape: Circle [id:dp5517908133265718] 
\draw   (298.83,222.39) .. controls (299.21,217.38) and (303.58,213.63) .. (308.59,214.01) .. controls (313.59,214.4) and (317.34,218.77) .. (316.96,223.78) .. controls (316.58,228.78) and (312.21,232.53) .. (307.2,232.15) .. controls (302.19,231.76) and (298.44,227.39) .. (298.83,222.39) -- cycle ;
%Shape: Circle [id:dp09051476407332915] 
\draw   (535,219.32) .. controls (535,205.51) and (546.19,194.32) .. (560,194.32) .. controls (573.81,194.32) and (585,205.51) .. (585,219.32) .. controls (585,233.12) and (573.81,244.32) .. (560,244.32) .. controls (546.19,244.32) and (535,233.12) .. (535,219.32) -- cycle ;
%Straight Lines [id:da7645279711850463] 
\draw    (499.05,221.82) -- (619.05,220.82) ;
%Shape: Circle [id:dp7905110111889331] 
\draw   (253.28,410.32) .. controls (253.28,406.79) and (256.14,403.93) .. (259.67,403.93) .. controls (263.19,403.93) and (266.05,406.79) .. (266.05,410.32) .. controls (266.05,413.84) and (263.19,416.7) .. (259.67,416.7) .. controls (256.14,416.7) and (253.28,413.84) .. (253.28,410.32) -- cycle ;
%Shape: Circle [id:dp6058970432344865] 
\draw   (529.62,221.32) .. controls (529.62,217.79) and (532.47,214.93) .. (536,214.93) .. controls (539.53,214.93) and (542.38,217.79) .. (542.38,221.32) .. controls (542.38,224.84) and (539.53,227.7) .. (536,227.7) .. controls (532.47,227.7) and (529.62,224.84) .. (529.62,221.32) -- cycle ;
%Shape: Circle [id:dp7907416167779688] 
\draw   (577.62,221.32) .. controls (577.62,217.79) and (580.47,214.93) .. (584,214.93) .. controls (587.53,214.93) and (590.38,217.79) .. (590.38,221.32) .. controls (590.38,224.84) and (587.53,227.7) .. (584,227.7) .. controls (580.47,227.7) and (577.62,224.84) .. (577.62,221.32) -- cycle ;
%Shape: Circle [id:dp970906113738064] 
\draw   (235,384.32) .. controls (235,370.51) and (246.19,359.32) .. (260,359.32) .. controls (273.81,359.32) and (285,370.51) .. (285,384.32) .. controls (285,398.12) and (273.81,409.32) .. (260,409.32) .. controls (246.19,409.32) and (235,398.12) .. (235,384.32) -- cycle ;
%Straight Lines [id:da9113375230031283] 
\draw    (199.05,410.82) -- (319.05,409.82) ;
%Straight Lines [id:da13576952159054612] 
\draw    (326.78,154.82) -- (327.03,186.48) ;
\draw [shift={(327.05,188.48)}, rotate = 269.55] [color={rgb, 255:red, 0; green, 0; blue, 0 }  ][line width=0.75]    (10.93,-3.29) .. controls (6.95,-1.4) and (3.31,-0.3) .. (0,0) .. controls (3.31,0.3) and (6.95,1.4) .. (10.93,3.29)   ;
%Straight Lines [id:da4592372946275922] 
\draw    (554.72,349.7) -- (598.72,419.7) ;
%Straight Lines [id:da0313824536993742] 
\draw    (525.37,410.7) -- (606.72,410.7) ;
%Straight Lines [id:da7230776780332654] 
\draw    (529.37,419.7) -- (567.37,349.7) ;
%Straight Lines [id:da014117835169314197] 
\draw    (558.78,105.82) -- (559.04,175.48) ;
\draw [shift={(559.05,177.48)}, rotate = 269.79] [color={rgb, 255:red, 0; green, 0; blue, 0 }  ][line width=0.75]    (10.93,-3.29) .. controls (6.95,-1.4) and (3.31,-0.3) .. (0,0) .. controls (3.31,0.3) and (6.95,1.4) .. (10.93,3.29)   ;
%Straight Lines [id:da9955477867420984] 
\draw    (417.43,352.07) -- (417.43,422.07) ;
%Straight Lines [id:da9890234810992349] 
\draw    (375.62,376.98) -- (460.78,377.05) ;
%Shape: Circle [id:dp8618428728180664] 
\draw   (381.14,381.3) .. controls (381.53,376.3) and (385.9,372.55) .. (390.91,372.93) .. controls (395.91,373.32) and (399.66,377.69) .. (399.28,382.69) .. controls (398.89,387.7) and (394.52,391.45) .. (389.52,391.07) .. controls (384.51,390.68) and (380.76,386.31) .. (381.14,381.3) -- cycle ;
%Shape: Circle [id:dp32242136349752504] 
\draw   (434.98,381.3) .. controls (435.36,376.3) and (439.73,372.55) .. (444.74,372.93) .. controls (449.75,373.32) and (453.5,377.69) .. (453.11,382.69) .. controls (452.73,387.7) and (448.36,391.45) .. (443.35,391.07) .. controls (438.34,390.68) and (434.59,386.31) .. (434.98,381.3) -- cycle ;
%Shape: Circle [id:dp4248721086220343] 
\draw   (555.62,360.32) .. controls (555.62,356.79) and (558.47,353.93) .. (562,353.93) .. controls (565.53,353.93) and (568.38,356.79) .. (568.38,360.32) .. controls (568.38,363.84) and (565.53,366.7) .. (562,366.7) .. controls (558.47,366.7) and (555.62,363.84) .. (555.62,360.32) -- cycle ;
%Shape: Circle [id:dp0377014093801783] 
\draw   (587.62,410.32) .. controls (587.62,406.79) and (590.47,403.93) .. (594,403.93) .. controls (597.53,403.93) and (600.38,406.79) .. (600.38,410.32) .. controls (600.38,413.84) and (597.53,416.7) .. (594,416.7) .. controls (590.47,416.7) and (587.62,413.84) .. (587.62,410.32) -- cycle ;
%Shape: Circle [id:dp9716044620798596] 
\draw   (528.62,410.32) .. controls (528.62,406.79) and (531.47,403.93) .. (535,403.93) .. controls (538.53,403.93) and (541.38,406.79) .. (541.38,410.32) .. controls (541.38,413.84) and (538.53,416.7) .. (535,416.7) .. controls (531.47,416.7) and (528.62,413.84) .. (528.62,410.32) -- cycle ;
%Straight Lines [id:da5319079326658308] 
\draw    (113.78,109.82) -- (282.05,205.83) ;
\draw [shift={(283.78,206.82)}, rotate = 209.71] [color={rgb, 255:red, 0; green, 0; blue, 0 }  ][line width=0.75]    (10.93,-3.29) .. controls (6.95,-1.4) and (3.31,-0.3) .. (0,0) .. controls (3.31,0.3) and (6.95,1.4) .. (10.93,3.29)   ;
%Straight Lines [id:da48289903830953385] 
\draw    (540.78,105.82) -- (365.52,205.83) ;
\draw [shift={(363.78,206.82)}, rotate = 330.29] [color={rgb, 255:red, 0; green, 0; blue, 0 }  ][line width=0.75]    (10.93,-3.29) .. controls (6.95,-1.4) and (3.31,-0.3) .. (0,0) .. controls (3.31,0.3) and (6.95,1.4) .. (10.93,3.29)   ;
%Straight Lines [id:da8417947697817048] 
\draw    (109.78,278.82) -- (220.11,351.71) ;
\draw [shift={(221.78,352.82)}, rotate = 213.45] [color={rgb, 255:red, 0; green, 0; blue, 0 }  ][line width=0.75]    (10.93,-3.29) .. controls (6.95,-1.4) and (3.31,-0.3) .. (0,0) .. controls (3.31,0.3) and (6.95,1.4) .. (10.93,3.29)   ;
%Straight Lines [id:da06925509672765673] 
\draw    (316.78,277.82) -- (268.92,347.17) ;
\draw [shift={(267.78,348.82)}, rotate = 304.61] [color={rgb, 255:red, 0; green, 0; blue, 0 }  ][line width=0.75]    (10.93,-3.29) .. controls (6.95,-1.4) and (3.31,-0.3) .. (0,0) .. controls (3.31,0.3) and (6.95,1.4) .. (10.93,3.29)   ;
%Straight Lines [id:da026891692423567037] 
\draw    (347.78,279.05) -- (402.56,350.23) ;
\draw [shift={(403.78,351.82)}, rotate = 232.42] [color={rgb, 255:red, 0; green, 0; blue, 0 }  ][line width=0.75]    (10.93,-3.29) .. controls (6.95,-1.4) and (3.31,-0.3) .. (0,0) .. controls (3.31,0.3) and (6.95,1.4) .. (10.93,3.29)   ;
%Straight Lines [id:da31706264469453005] 
\draw    (129,279.32) -- (385.86,354.26) ;
\draw [shift={(387.78,354.82)}, rotate = 196.26] [color={rgb, 255:red, 0; green, 0; blue, 0 }  ][line width=0.75]    (10.93,-3.29) .. controls (6.95,-1.4) and (3.31,-0.3) .. (0,0) .. controls (3.31,0.3) and (6.95,1.4) .. (10.93,3.29)   ;
%Straight Lines [id:da42376223364573495] 
\draw    (561.78,276.82) -- (562.04,339.48) ;
\draw [shift={(562.05,341.48)}, rotate = 269.76] [color={rgb, 255:red, 0; green, 0; blue, 0 }  ][line width=0.75]    (10.93,-3.29) .. controls (6.95,-1.4) and (3.31,-0.3) .. (0,0) .. controls (3.31,0.3) and (6.95,1.4) .. (10.93,3.29)   ;
%Straight Lines [id:da3497576308243896] 
\draw    (546.78,276.82) -- (435.45,350.71) ;
\draw [shift={(433.78,351.82)}, rotate = 326.43] [color={rgb, 255:red, 0; green, 0; blue, 0 }  ][line width=0.75]    (10.93,-3.29) .. controls (6.95,-1.4) and (3.31,-0.3) .. (0,0) .. controls (3.31,0.3) and (6.95,1.4) .. (10.93,3.29)   ;
%Straight Lines [id:da2145862906333974] 
\draw    (523.78,278.82) -- (296.7,347.24) ;
\draw [shift={(294.78,347.82)}, rotate = 343.23] [color={rgb, 255:red, 0; green, 0; blue, 0 }  ][line width=0.75]    (10.93,-3.29) .. controls (6.95,-1.4) and (3.31,-0.3) .. (0,0) .. controls (3.31,0.3) and (6.95,1.4) .. (10.93,3.29)   ;
%Straight Lines [id:da5749501932161329] 
\draw    (158.43,535.07) -- (158.43,605.07) ;
%Straight Lines [id:da7040728752510611] 
\draw    (137.62,559.98) -- (205.78,560.05) ;
%Straight Lines [id:da7345457282417689] 
\draw    (136.52,570.07) -- (206.35,570.07) ;
%Shape: Circle [id:dp1434934898050576] 
\draw   (181.98,564.3) .. controls (182.36,559.3) and (186.73,555.55) .. (191.74,555.93) .. controls (196.75,556.32) and (200.5,560.69) .. (200.11,565.69) .. controls (199.73,570.7) and (195.36,574.45) .. (190.35,574.07) .. controls (185.34,573.68) and (181.59,569.31) .. (181.98,564.3) -- cycle ;
%Straight Lines [id:da521878199255033] 
\draw    (92.78,455.82) -- (143.67,532.15) ;
\draw [shift={(144.78,533.82)}, rotate = 236.31] [color={rgb, 255:red, 0; green, 0; blue, 0 }  ][line width=0.75]    (10.93,-3.29) .. controls (6.95,-1.4) and (3.31,-0.3) .. (0,0) .. controls (3.31,0.3) and (6.95,1.4) .. (10.93,3.29)   ;
%Straight Lines [id:da7645355778814918] 
\draw    (243.78,457.82) -- (178.13,530.33) ;
\draw [shift={(176.78,531.82)}, rotate = 312.16] [color={rgb, 255:red, 0; green, 0; blue, 0 }  ][line width=0.75]    (10.93,-3.29) .. controls (6.95,-1.4) and (3.31,-0.3) .. (0,0) .. controls (3.31,0.3) and (6.95,1.4) .. (10.93,3.29)   ;
%Straight Lines [id:da6313749156282243] 
\draw    (409.78,459.82) -- (208.67,531.15) ;
\draw [shift={(206.78,531.82)}, rotate = 340.47] [color={rgb, 255:red, 0; green, 0; blue, 0 }  ][line width=0.75]    (10.93,-3.29) .. controls (6.95,-1.4) and (3.31,-0.3) .. (0,0) .. controls (3.31,0.3) and (6.95,1.4) .. (10.93,3.29)   ;
%Straight Lines [id:da1340975975935863] 
\draw    (460.43,532.07) -- (460.43,602.07) ;
%Straight Lines [id:da10782294458741026] 
\draw    (438.52,567.07) -- (508.35,567.07) ;
%Shape: Circle [id:dp03892988870611436] 
\draw   (484.98,566.3) .. controls (485.36,561.3) and (489.73,557.55) .. (494.74,557.93) .. controls (499.75,558.32) and (503.5,562.69) .. (503.11,567.69) .. controls (502.73,572.7) and (498.36,576.45) .. (493.35,576.07) .. controls (488.34,575.68) and (484.59,571.31) .. (484.98,566.3) -- cycle ;
%Straight Lines [id:da38001302486279975] 
\draw    (281.78,457.82) -- (445.95,530.01) ;
\draw [shift={(447.78,530.82)}, rotate = 203.74] [color={rgb, 255:red, 0; green, 0; blue, 0 }  ][line width=0.75]    (10.93,-3.29) .. controls (6.95,-1.4) and (3.31,-0.3) .. (0,0) .. controls (3.31,0.3) and (6.95,1.4) .. (10.93,3.29)   ;
%Straight Lines [id:da7628746933358159] 
\draw    (567,461.32) -- (473.42,526.67) ;
\draw [shift={(471.78,527.82)}, rotate = 325.07] [color={rgb, 255:red, 0; green, 0; blue, 0 }  ][line width=0.75]    (10.93,-3.29) .. controls (6.95,-1.4) and (3.31,-0.3) .. (0,0) .. controls (3.31,0.3) and (6.95,1.4) .. (10.93,3.29)   ;
%Straight Lines [id:da5637553667874406] 
\draw    (427,458.32) -- (455.01,524.97) ;
\draw [shift={(455.78,526.82)}, rotate = 247.21] [color={rgb, 255:red, 0; green, 0; blue, 0 }  ][line width=0.75]    (10.93,-3.29) .. controls (6.95,-1.4) and (3.31,-0.3) .. (0,0) .. controls (3.31,0.3) and (6.95,1.4) .. (10.93,3.29)   ;
%Straight Lines [id:da049446553536767746] 
\draw    (284.62,695.98) -- (352.78,696.05) ;
%Straight Lines [id:da7101766378146243] 
\draw    (283.52,706.07) -- (353.35,706.07) ;
%Shape: Circle [id:dp41522558340113247] 
\draw   (306.98,700.3) .. controls (307.36,695.3) and (311.73,691.55) .. (316.74,691.93) .. controls (321.75,692.32) and (325.5,696.69) .. (325.11,701.69) .. controls (324.73,706.7) and (320.36,710.45) .. (315.35,710.07) .. controls (310.34,709.68) and (306.59,705.31) .. (306.98,700.3) -- cycle ;
%Straight Lines [id:da0916801581328387] 
\draw    (283.62,700.98) -- (351.78,701.05) ;
%Straight Lines [id:da47006116085316096] 
\draw    (175.78,639.82) -- (275.29,693.86) ;
\draw [shift={(277.05,694.82)}, rotate = 208.51] [color={rgb, 255:red, 0; green, 0; blue, 0 }  ][line width=0.75]    (10.93,-3.29) .. controls (6.95,-1.4) and (3.31,-0.3) .. (0,0) .. controls (3.31,0.3) and (6.95,1.4) .. (10.93,3.29)   ;
%Straight Lines [id:da4101402476384918] 
\draw    (458,637.32) -- (354.84,688.92) ;
\draw [shift={(353.05,689.82)}, rotate = 333.42] [color={rgb, 255:red, 0; green, 0; blue, 0 }  ][line width=0.75]    (10.93,-3.29) .. controls (6.95,-1.4) and (3.31,-0.3) .. (0,0) .. controls (3.31,0.3) and (6.95,1.4) .. (10.93,3.29)   ;
%Shape: Arc [id:dp7551841291289472] 
\draw  [draw opacity=0] (495.58,794.14) .. controls (495.79,795.29) and (495.93,796.48) .. (495.98,797.69) .. controls (496.51,810.29) and (487.82,820.89) .. (476.58,821.36) .. controls (465.33,821.84) and (455.78,812.01) .. (455.25,799.41) .. controls (455.15,796.95) and (455.39,794.57) .. (455.94,792.32) -- (475.61,798.55) -- cycle ; \draw   (495.58,794.14) .. controls (495.79,795.29) and (495.93,796.48) .. (495.98,797.69) .. controls (496.51,810.29) and (487.82,820.89) .. (476.58,821.36) .. controls (465.33,821.84) and (455.78,812.01) .. (455.25,799.41) .. controls (455.15,796.95) and (455.39,794.57) .. (455.94,792.32) ;  
%Straight Lines [id:da08638150990222171] 
\draw    (353,738.65) -- (461.19,781.41) ;
\draw [shift={(463.05,782.15)}, rotate = 201.57] [color={rgb, 255:red, 0; green, 0; blue, 0 }  ][line width=0.75]    (10.93,-3.29) .. controls (6.95,-1.4) and (3.31,-0.3) .. (0,0) .. controls (3.31,0.3) and (6.95,1.4) .. (10.93,3.29)   ;
%Straight Lines [id:da9560510377370326] 
\draw    (283,743.65) -- (190.93,776.48) ;
\draw [shift={(189.05,777.15)}, rotate = 340.38] [color={rgb, 255:red, 0; green, 0; blue, 0 }  ][line width=0.75]    (10.93,-3.29) .. controls (6.95,-1.4) and (3.31,-0.3) .. (0,0) .. controls (3.31,0.3) and (6.95,1.4) .. (10.93,3.29)   ;
%Shape: Circle [id:dp25123512132820947] 
\draw   (451.98,566.3) .. controls (452.36,561.3) and (456.73,557.55) .. (461.74,557.93) .. controls (466.75,558.32) and (470.5,562.69) .. (470.11,567.69) .. controls (469.73,572.7) and (465.36,576.45) .. (460.35,576.07) .. controls (455.34,575.68) and (451.59,571.31) .. (451.98,566.3) -- cycle ;
%Shape: Circle [id:dp4038890100058177] 
\draw   (159.98,806.3) .. controls (160.36,801.3) and (164.73,797.55) .. (169.74,797.93) .. controls (174.75,798.32) and (178.5,802.69) .. (178.11,807.69) .. controls (177.73,812.7) and (173.36,816.45) .. (168.35,816.07) .. controls (163.34,815.68) and (159.59,811.31) .. (159.98,806.3) -- cycle ;
%Straight Lines [id:da1454452488798058] 
\draw    (170.67,826.51) -- (193.56,788.88) ;
%Straight Lines [id:da38542646785840573] 
\draw    (145.11,824.44) -- (168,786.82) ;
%Straight Lines [id:da06637025077959247] 
\draw    (151.53,825.12) -- (174.42,787.49) ;
%Straight Lines [id:da023485563406554943] 
\draw    (157.6,825.81) -- (180.49,788.19) ;
%Straight Lines [id:da9137025382247276] 
\draw    (163.67,827.51) -- (186.56,789.88) ;
%Straight Lines [id:da8307509570417455] 
\draw    (458.87,823.31) -- (481.76,785.68) ;
%Straight Lines [id:da21327421730664575] 
\draw    (433.31,821.24) -- (456.2,783.62) ;
%Straight Lines [id:da7585001101301488] 
\draw    (439.73,821.92) -- (462.62,784.29) ;
%Straight Lines [id:da7306879110033135] 
\draw    (445.8,822.61) -- (468.69,784.99) ;
%Straight Lines [id:da5566346892951858] 
\draw    (451.87,824.31) -- (474.76,786.68) ;
%Straight Lines [id:da3794418120114782] 
\draw    (490.87,826.31) -- (513.76,788.68) ;
%Straight Lines [id:da6195413359824145] 
\draw    (465.31,824.24) -- (488.2,786.62) ;
%Straight Lines [id:da8632308057516093] 
\draw    (471.73,824.92) -- (494.62,787.29) ;
%Straight Lines [id:da6772415987849666] 
\draw    (477.8,825.61) -- (500.69,787.99) ;
%Straight Lines [id:da5319600228424098] 
\draw    (483.87,827.31) -- (506.76,789.68) ;
%Straight Lines [id:da6361715538340517] 
\draw    (375.62,387.98) -- (460.78,388.05) ;
%Shape: Free Drawing [id:dp8338061613750303] 
\draw  [color={rgb, 255:red, 0; green, 0; blue, 0 }  ][line width=0.75] [line join = round][line cap = round] (348.6,245.73) .. controls (342.2,236.8) and (329.89,231.31) .. (322.46,228.83) .. controls (318.37,227.46) and (308.53,223.85) .. (306.88,224.79) ;

% Text Node
\draw (-18,66.4) node [anchor=north west][inner sep=0.75pt]    {$8$};
% Text Node
\draw (-19,218.4) node [anchor=north west][inner sep=0.75pt]    {$7$};
% Text Node
\draw (-14,396.4) node [anchor=north west][inner sep=0.75pt]    {$6$};
% Text Node
\draw (-17,558.4) node [anchor=north west][inner sep=0.75pt]    {$5$};
% Text Node
\draw (-17,700.4) node [anchor=north west][inner sep=0.75pt]    {$4$};
% Text Node
\draw (-15,811.2) node [anchor=north west][inner sep=0.75pt]    {$2$};
% Text Node
\draw (24,85.72) node [anchor=north west][inner sep=0.75pt]    {$xy,\ x^{2} +yz,y^{2} +xz$};
% Text Node
\draw (31,249.72) node [anchor=north west][inner sep=0.75pt]    {$x^{2} +yz,xy,xz$};
% Text Node
\draw (489,84.72) node [anchor=north west][inner sep=0.75pt]    {$z^{2} ,x^{2} -yz ,y^{2} -xz$};
% Text Node
\draw (227,103.72) node [anchor=north west][inner sep=0.75pt]    {$x^{2} +\lambda yz,y^{2} +\lambda xz,z^{2} +\lambda xy$};
% Text Node
\draw (243,124.72) node [anchor=north west][inner sep=0.75pt]    {$\lambda \neq -1,-\omega,-\omega^{2} ,0,2,2\omega,2\omega^{2}$};
% Text Node
\draw (374,61.72) node [anchor=north west][inner sep=0.75pt]    {${\sf j}( \lambda )$};
% Text Node
\draw (263,249.72) node [anchor=north west][inner sep=0.75pt]    {$x^{2} +yz,xy,z^{2}$};
% Text Node
\draw (506,249.72) node [anchor=north west][inner sep=0.75pt]    {$x^{2} +yz,y^{2} ,z^{2}$};
% Text Node
\draw (46,432.72) node [anchor=north west][inner sep=0.75pt]    {$xy,yz,xz$};
% Text Node
\draw (205,430.72) node [anchor=north west][inner sep=0.75pt]    {$x^{2} ,( y+z) z,xy$};
% Text Node
\draw (361,431.72) node [anchor=north west][inner sep=0.75pt]    {$xz,x^{2} +z^{2} ,yz$};
% Text Node
\draw (528,432.72) node [anchor=north west][inner sep=0.75pt]    {$x^{2} ,y^{2} ,z^{2}$};
% Text Node
\draw (123,611.72) node [anchor=north west][inner sep=0.75pt]    {$xy,xz,z^{2}$};
% Text Node
\draw (420,612.72) node [anchor=north west][inner sep=0.75pt]    {$y^{2} ,z^{2},xy$};
% Text Node
\draw (260,716.72) node [anchor=north west][inner sep=0.75pt]    {$x^{2} +yz,xy,y^{2}$};
% Text Node
\draw (417,837.4) node [anchor=north west][inner sep=0.75pt]    {$x^{2} ,y^{2} ,( x+y)^{2}$};
% Text Node
\draw (126,835.37) node [anchor=north west][inner sep=0.75pt]    {$x^{2} ,xy,xz$};
% Text Node
\draw (135,200.72) node [anchor=north west][inner sep=0.75pt]    {\it 2};
% Text Node
\draw (300,388.72) node [anchor=north west][inner sep=0.75pt]     {\it 2};
% Text Node
\draw (470,373.72) node [anchor=north west][inner sep=0.75pt]      {\it 2};
% Text Node
\draw (516,557.72) node [anchor=north west][inner sep=0.75pt]      {\it 2};
% Text Node
\draw (114,370.72) node [anchor=north west][inner sep=0.75pt]    {\it 3};
% Text Node
\draw (219,557.72) node [anchor=north west][inner sep=0.75pt]     {\it 3};
% Text Node
\draw (369,690.72) node [anchor=north west][inner sep=0.75pt]     {\it 3};
% Text Node
\draw (533,795.72) node [anchor=north west][inner sep=0.75pt]    {\it 3};
% Text Node
\draw (140,45.72) node [anchor=north west][inner sep=0.75pt]     {\it 1};
% Text Node
\draw (360,218.72) node [anchor=north west][inner sep=0.75pt]    {\it 1};
% Text Node
\draw (379,25.72) node [anchor=north west][inner sep=0.75pt]    {\it 0};
% Text Node
\draw (591,367.72) node [anchor=north west][inner sep=0.75pt]   {\it 0};
% Text Node
\draw (600,200.72) node [anchor=north west][inner sep=0.75pt]   {\it 0};
% Text Node
\draw (603,48.72) node [anchor=north west][inner sep=0.75pt]    {\it 0};
% Text Node
\draw (214,800.72) node [anchor=north west][inner sep=0.75pt]    {$\infty $};

 \thispagestyle{empty}
\end{tikzpicture}
\caption{Specializations of nets of conics and scheme length.}\label{1fig}
\end{figure}
\vskip 0.2cm
\subsubsection{Hilbert functions $H(R/(V))$ for $ V$ a net of conics in $\mathbb P^2$.}
The Lemma \ref{regslem} below is a consequence of the G. Gotzmann theorems on the Hilbert scheme - see \cite{Gotz} and the exposition in \cite[Appendix C]{IK}. We explain the concept briefly.
\begin{lemma}\label{Maclem}[Macaulay inequality] Recall that $ R_j$ is the vector space of degree-$j$ forms in $R={\sf k}[x_1,\ldots,x_r]$.  Let $d\le \dim_{\sf k}R_j$, and denote by $V_0(r,j,d)$ the span of the first $d$ monomials of $R_j$ in lexicographic order. Set $f(r,j,d)=\dim_{\sf k}R_1V_0(r,j,d)$. Then for any vector subspace $V\subset R_j$ with $\dim V=d$, we have
\begin{equation}\label{Macaulayeq} \dim_{\sf k}R_1V\ge f(r,j,d).
\end{equation}
\end{lemma}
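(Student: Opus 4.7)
The plan is to reduce the inequality for a general subspace $V\subset R_j$ of dimension $d$ to the case where $V$ is a monomial subspace, and then to invoke the classical extremality of lex-segments.

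First I would fix the lexicographic monomial order on $R$ and, after applying a generic change of coordinates on $R_1$ if required, pass to the initial monomial subspace $\init(V)\subset R_j$, spanned by the leading monomials of the nonzero elements of $V$. Standard Gr\"obner-basis arguments give $\dim_{\kk}\init(V)=\dim_{\kk}V=d$. Moreover, for any $x_i\in R_1$ and $v\in V$ one has $x_i\cdot \init(v)=\init(x_iv)\in \init(R_1V)$, so that $R_1\cdot \init(V)\subseteq \init(R_1V)$ and hence
\begin{equation*}
\dim_{\kk}R_1\cdot \init(V)\;\le\;\dim_{\kk}\init(R_1V)\;=\;\dim_{\kk}R_1V.
\end{equation*}
Since the right-hand side $f(r,j,d)$ of \eqref{Macaulayeq} depends only on $d$, it is enough to prove the inequality when $V$ itself is spanned by monomials.

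Second, among monomial subspaces $W\subset R_j$ of dimension $d$, the inequality $\dim_{\kk}R_1W\ge f(r,j,d)$ is precisely the classical Macaulay--Gotzmann extremality statement: the lex-segment $V_0(r,j,d)$ minimizes $\dim_{\kk}R_1W$. This can be shown by Macaulay's original compression argument, iteratively replacing $W$ by a lex-closer monomial subspace of the same dimension without increasing $\dim R_1W$, until $V_0(r,j,d)$ is reached; or, more cleanly, it follows from the Gotzmann persistence theorem, which identifies the lex-segment as the extremal subspace governing the minimal growth $d\mapsto \dim R_1V$. For this step I would quote \cite{Gotz} and the detailed exposition in \cite[Appendix~C]{IK}, rather than reproducing the combinatorics.

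The main obstacle is Step 2: the compression/persistence argument singling out lex-segments as extremal. Since this is a well-documented cornerstone of the theory of Hilbert functions, I would not reprove it here but rely on the cited references. Step 1, the reduction to monomial subspaces via initial ideals, is then routine and essentially dimension-counting.
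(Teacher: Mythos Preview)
The paper does not actually prove this lemma; it is stated as the classical Macaulay inequality, with a pointer to \cite{Gotz} and the exposition in \cite[Appendix~C]{IK}, and then used as a black box. So there is no ``paper's proof'' to compare against.

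Your sketch is the standard route and is correct. Two small remarks. First, in Step~1 the generic change of coordinates is unnecessary: for any fixed monomial order, Gaussian elimination on a basis of $V$ already gives $\dim_{\sf k}\init(V)=d$, and the containment $R_1\cdot\init(V)\subseteq \init(R_1V)$ holds because multiplication by a variable commutes with taking leading monomials. (Generic coordinates would only be needed if you wanted $\init(V)$ to be Borel-fixed, which is not required here.) Second, Step~2 --- the extremality of the lex-segment among monomial subspaces of fixed dimension --- is precisely Macaulay's theorem in its graded-piece form; citing \cite{Gotz} or \cite[Appendix~C]{IK} is exactly what the paper itself does, so this is appropriate rather than a gap.
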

 To state the Gotzmann theorem, in the form that we will use to delimit the possible Hilbert functions of $R/(V)$ and of related algebras, we need some definitions.
Recall that we denote by $\overline{a}$ the infinite sequence $(a,a,\ldots )$ of $a's$. \par
The \emph{Castelnuovo-Mumford regularity degree} of a finitely-generated $R$-module $M$ is the smallest degree $m$ such the $M$ is $m-$regular; $M$ is $m$-regular if  $\Ext^j(M,R)_n=0$ for all $j$ and all $n\le -m-j-1$ (\cite[Proposition 20.16]{Ei}). These definitions become simpler when $A$ is Artinian or has dimension one.

 \begin{definition}\label{regdef} The \emph{regularity degree} of an Artinan algebra $A$ is the socle degree of $A$. A graded algebra $A$ has dimension one when $H(A)=(1,\ldots ,s_u,s,\overline{s})$: here, the regularity degree of $A$ is the smallest degree $u$ for which $H(A)_u=s$.  More generally, \par The \emph{Hilbert polynomial} $\mathfrak h_A(t)$ of an Artinian algebra is its length $|A|$; that of a dimension one algebra is $s$ above.\par
 The \emph{Gotzmann regularity} degree of a polynomial $h(t)$ is the smallest degree $\mathfrak r(h)$ such that all algebras of Hilbert polynomial $h(t)$ are regular in that degree. 
 \end{definition}
 We now define the ancestor ideal $\overline{V}$ of a vector space of forms $V\subset R_j$. {Recall that we denote by $(V)$ the ideal of $R$ generated by $V$.
 \begin{definition}\label{ancestordef}\cite{I3} Let $R={\sf k}[x_1,\ldots,x_r]$ and $V\subset R_j$ be a subvector space. Then the \emph{ancestor ideal} $\overline{V}$ of $V$ in $R$ satisfies
\begin{align}\label{anc1eq}
\overline{V}&=\oplus _{i=1}^j \overline{V}_i+(V) \text { where for $1\le i\le j$ }\notag\\
\overline{V}_i&=V:R_{j-i}=\{h\in R_i \mid  R_{j-i}h\subset V\}.
\end{align}
\end{definition}
 The Gotzmann theorem  (\cite{Gotz},\cite[Appendix C]{IK}) states \par
\begin{proposition}\label{Gotzmannprop}
Assume that  $V\subset R_j$ satisfies $\dim_{\sf k}R_1V=f(r,j,d)$. Then \par
(i). Persistence:  $\dim R_2V$ satisfies 
$$\dim R_2V=f(r,j,d_1), \text{ where } d_1=f(r,j,d).$$
\par
(ii). Scheme $\Z_V$: The ancestor ideal $J=\overline{V}$ (Definition \ref{ancestordef}) - which agrees with the ideal $(V)$ in degree at least $j$ - defines a subscheme $\Z_V=\Proj(R/J)\subset \mathbb P^{r-1}$ whose Hilbert polynomial $p_\Z$ is that giving $\dim_{\sf k} R_i/R_{i-j}V$ for $i$ at least the \emph{Gotzmann regularity degree} $\mathfrak r(p_\Z)$ of $p_\Z$.
\end{proposition}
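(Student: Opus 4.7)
The plan is to deduce both assertions from Gotzmann's classical persistence and regularity theorems, whose statements are reviewed in \cite[Appendix C]{IK}. The hypothesis $\dim_{\sf k}R_1V=f(r,j,d)$ is equality in the Macaulay bound of Lemma \ref{Maclem}, and my strategy will be to exploit this extremality by comparing $V$ with the lex-segment $V_0=V_0(r,j,d)$ that realizes the bound.

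First I will establish persistence (i). After a generic change of coordinates the generic initial ideal $\mathrm{gin}(V)$ is Borel-fixed, and equality in Macaulay's bound forces its degree-$j$ component to coincide with $V_0$. For the lex-segment one computes $\dim R_2V_0$ directly: writing the Macaulay expansion $d=\binom{k_j}{j}+\binom{k_{j-1}}{j-1}+\cdots+\binom{k_s}{s}$, one has $f(r,j,d)=\binom{k_j+1}{j+1}+\binom{k_{j-1}+1}{j}+\cdots+\binom{k_s+1}{s+1}$, and iterating the same recipe yields $\dim R_2V_0=f(r,j,d_1)$ with $d_1=f(r,j,d)$. Since passage to $\mathrm{gin}$ preserves the relevant Hilbert-function data of the ideals generated by $V$, this gives the claimed equality for $V$ itself.

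Next I will derive the scheme assertion (ii). Iterating (i) in all higher degrees shows that $\dim R_iV$ agrees with the value predicted by the lex-segment for every $i\geq 1$, so $i\mapsto \dim R_i-\dim R_{i-j}V$ is eventually a polynomial $p_\Z(t)$ of degree equal to $\dim\Z_V$. Gotzmann's regularity theorem identifies the smallest degree $\mathfrak r(p_\Z)$ at which this polynomial matches the Hilbert function, a quantity read off from the Macaulay expansion of $p_\Z$. The ancestor ideal $\overline{V}$ fills in the degrees $<j$ via the colons $V:R_{j-i}$ while coinciding with $(V)$ in degrees $\geq j$ by definition, so $\mathrm{Proj}(R/\overline{V})$ is exactly the scheme $\Z_V$ with Hilbert polynomial $p_\Z$.

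The main obstacle is the persistence step, which is the combinatorial heart of Gotzmann's theorem. A fully self-contained proof would require several pages of bookkeeping with Macaulay binomial expansions combined with the Borel-fixedness of generic initial ideals (most cleanly organised through the Eliahou--Kervaire resolution of stable monomial ideals). For the purposes of this paper I plan simply to cite the statements as assembled in \cite[Appendix C]{IK}, so that in the body it suffices to verify the numerical hypothesis $\dim R_1V=f(r,j,d)$ for the nets of conics $V\subset R_2$ under consideration.
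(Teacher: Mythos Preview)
The paper does not prove this proposition at all: it is presented as a statement of Gotzmann's theorem, with nothing beyond the citations to \cite{Gotz} and \cite[Appendix~C]{IK}. Your concluding plan---to simply invoke the statements assembled in \cite[Appendix~C]{IK}---is therefore precisely what the paper does, and the preceding sketch is superfluous for the paper's purposes.

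One caution about the sketch itself. The step ``equality in Macaulay's bound forces the degree-$j$ component of $\mathrm{gin}(V)$ to coincide with the lex segment $V_0$'' is not something you can assert without further argument, and it is not how the standard proofs of Gotzmann persistence proceed. Green's hyperplane-restriction proof and Gotzmann's original argument both establish persistence for Borel-fixed (or arbitrary) $V$ directly, without ever reducing to the lex case; indeed, if extremal growth immediately forced $\mathrm{gin}(V)_j$ to be lex, persistence would be an immediate corollary rather than the substantive theorem it is. So if you ever want to upgrade the sketch to a real proof rather than a citation, that particular reduction would itself need justification (or replacement by one of the standard arguments).
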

 For the next Lemma see \cite[Proposition~C.32]{IK}. 
\begin{lemma}[The Gotzmann regularity degree of the constant Hilbert polynomial]\label{regslem} Let $R={\sf k}[x_1,\ldots,x_r]$ be a polynomial ring, let $s$ be a positive integer. Assume that the ideal $I\subset R$, satisfies
\begin{equation*}
 H(R/I)_i=H(R/I)_{i+1}=s \text { for some integer } i\ge s. 
 \end{equation*}
 Then $H(R/(I_i))_k=s $ for $k\ge i$, where $(I_i)$ denotes the ideal generated by $I_i$.\par
 Equivalently, the Gotzmann regularity degree of the 
 constant Hilbert polynomial $p_\Z=s$ is $s$.
\end{lemma}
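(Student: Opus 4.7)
Writing $V:=I_i\subset R_i$ and $J=(I_i)$, we have $J_k=R_{k-i}V$ for $k\ge i$, so the conclusion is equivalent to $\dim_{\sf k}R_{k-i}V=\dim R_k-s$ for every $k\ge i$. The plan is to show that $V$ attains the Macaulay minimum growth at the step $i\to i+1$, and then to iterate via Gotzmann persistence (Proposition~\ref{Gotzmannprop}(i)).

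The combinatorial core of the argument is the identity
\[
f(r,j,\dim R_j-s)=\dim R_{j+1}-s\qquad\text{for every }j\ge s.
\]
I would establish this by describing the lex-segment $V_0(r,j,\dim R_j-s)$ explicitly: when $j\ge s$, its complement in $R_j$ is spanned by the $s$ lex-smallest monomials of degree $j$, all of which lie in ${\sf k}[x_{r-\ell},\dots,x_r]$ for a small $\ell=\ell(s)$, and a direct check on these finitely many monomials shows that $R_1\cdot V_0$ has codimension exactly $s$ in $R_{j+1}$. This is the combinatorial content of ``the Gotzmann regularity of the constant polynomial $s$ equals $s$'' and is the principal obstacle in the proof; everything else flows from it.

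With this identity in hand, apply it at $j=i$. On the one hand, $R_1V\subset I_{i+1}$ together with the hypothesis $H(R/I)_{i+1}=s$ give $\dim_{\sf k}R_1V\le\dim R_{i+1}-s$; on the other, Lemma~\ref{Maclem} and the identity give $\dim_{\sf k}R_1V\ge f(r,i,\dim R_i-s)=\dim R_{i+1}-s$. Equality therefore holds, so $V$ attains the Macaulay minimum at the first step. Iterated Gotzmann persistence, combined with the displayed identity applied at each successive degree $j=i,i+1,i+2,\dots$ (each of which is $\ge s$), then yields $\dim_{\sf k}R_{k-i}V=\dim R_k-s$ for all $k\ge i$, i.e.\ $H(R/(I_i))_k=s$ as required. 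The equivalent reformulation about the Gotzmann regularity of $p_\Z=s$ is then immediate: any graded quotient $R/I$ with constant Hilbert polynomial $s$ eventually satisfies $H(R/I)_j=H(R/I)_{j+1}=s$, and the argument above shows that this stabilization is forced as soon as $j\ge s$.
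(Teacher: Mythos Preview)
The paper does not actually prove this lemma; it simply refers the reader to \cite[Proposition~C.32]{IK}. Your proposal therefore supplies what the paper omits, and the argument you give is correct in substance.

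Your route---bound $\dim_{\sf k}R_1V$ from above via $R_1V\subset I_{i+1}$, from below via Lemma~\ref{Maclem}, match the two using the identity $f(r,j,\dim R_j-s)=\dim R_{j+1}-s$ for $j\ge s$, and then iterate with Gotzmann persistence (Proposition~\ref{Gotzmannprop}(i))---is exactly the standard one, and it is precisely what underlies the cited result in \cite{IK}. Your description of the lex-segment complement is accurate: for $j\ge s$ the $s$ lex-smallest monomials of degree $j$ are $x_r^j,\,x_{r-1}x_r^{j-1},\ldots,x_{r-1}^{s-1}x_r^{j-s+1}$, and a direct check shows the complement of $R_1V_0(r,j,\dim R_j-s)$ in $R_{j+1}$ is again of size $s$. (At $j=s-1$ the complement jumps to size $s+1$, confirming that the threshold $j\ge s$ is sharp.)

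One small caveat: your final paragraph, deriving the ``equivalent'' statement that the Gotzmann regularity degree of the constant polynomial $s$ equals $s$, is a bit elliptical. What you have shown is that once $H(R/I)$ hits $s$ at two consecutive degrees $\ge s$, the ideal $(I_i)$ has constant Hilbert function $s$ thereafter; to get from this to the regularity statement one should pass to the saturation (or ancestor ideal) and invoke Proposition~\ref{Gotzmannprop}(ii). Since the paper itself asserts the equivalence without further comment, and only the first form of the lemma is used later, this is a minor point.
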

This result assures that the sequence $H(R/(V))=(1,3,3,2,2,\overline {1})$ cannot occur as the Hilbert function for $A=R/(V)$ for a net $V$ of ternary conics, as $(V)_3$ would be defined by a length 2 scheme so $(V)_{\ge 3}=(J_\Z)_{\ge 3}$ and we would have $H(R/(V))=(1,3,3,\overline{2})$.
\par We next recall the following semicontinuity result.
\begin{lemma}\label{openlem} Let $V(w), w\in \mathcal W$ be a family of constant-dimension $d$ subspaces of $R_j$. Then the condition $\dim_{\sf k}R_1V > k$ is an open condition on the family $V(w)$. Also, the Hilbert function of $H(R/(V))$ is semicontinuous.
\end{lemma}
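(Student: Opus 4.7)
The plan is to reduce both assertions to the standard algebraic-geometry fact that the rank of a family of linear maps is lower semicontinuous, i.e.\ for each $k$ the locus where the rank is at least $k+1$ is Zariski open.

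First I would work locally on $\mathcal W$: since each $V(w)$ is a point of the Grassmannian $\Grass(R_j,d)$, around any $w_0 \in \mathcal W$ one can choose an algebraic basis $v_1(w),\ldots,v_d(w)$ of $V(w)$ whose coordinates in a fixed basis of $R_j$ are regular functions of $w$ (this is just the standard affine-chart presentation of the Grassmannian). With a fixed basis of $R_{j+1}$ also chosen, the multiplication map $\mu_w \colon R_1 \otimes V(w) \to R_{j+1}$ is then represented by a matrix $M(w)$ with polynomial entries, and $\dim_{\sf k} R_1 V(w) = \rk M(w)$. The set $\{w : \rk M(w) > k\}$ is precisely the complement in $\mathcal W$ of the vanishing locus of the ideal of $(k{+}1)\times(k{+}1)$ minors of $M(w)$, hence Zariski open, which proves the first assertion.

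For semicontinuity of $H(R/(V))$ I would run the same argument in each degree. For every $i$ we have
\[ H(R/(V(w)))_i = \dim_{\sf k} R_i - \dim_{\sf k} (V(w))_i, \]
with $(V(w))_i = 0$ for $i<j$ and $(V(w))_i = R_{i-j} V(w)$ for $i\geq j$. Applying the preceding argument with $R_1$ replaced by $R_{i-j}$, the function $w \mapsto \dim_{\sf k} R_{i-j} V(w)$ is lower semicontinuous on $\mathcal W$, so $w \mapsto H(R/(V(w)))_i$ is upper semicontinuous in each degree $i$, which is the usual sense of semicontinuity for Hilbert functions.

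There is no genuine obstacle; the argument amounts to the standard upper semicontinuity of fiber dimensions for the cokernel of a morphism of locally free sheaves. The only two points requiring a bit of care are the local choice of an algebraic basis of $V(w)$ (handled by passing to a Grassmannian chart containing $w_0$) and the observation that we only need to control $H$ one degree at a time, so no uniform statement across all degrees is required.
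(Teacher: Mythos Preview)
Your proof is correct and follows essentially the same approach as the paper: represent $R_1V(w)$ as the image of a matrix with regular entries and invoke the openness of the rank condition, then iterate over degrees for the Hilbert function. You are simply more explicit than the paper about choosing a local algebraic basis for $V(w)$ via a Grassmannian chart, a detail the paper's two-sentence proof leaves implicit.
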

\begin{proof}The condition $\dim_{\sf k}R_1V > k$ requires that the $rd\times r_{j+1}$ matrix defining the span of $R_1V=\langle x_1 V,\ldots, x_r V\rangle$ in
$R_{j+1}$ has rank greater than $k$ - an open condition. The same can be said for $R_iV(w)$, so  the Hilbert function $H(R/(V(w))), w\in \mathcal W$ is semicontinuous. 
\end{proof}
The following result is well-known.
\begin{lemma}  A complete intersection of generator degrees $(2,2,2)$ has Hilbert function $(1,3,3,1,0)$.
No ideal $I=(V)$ generated by a net of conics can have a different Hilbert function ending in zero.
\end{lemma}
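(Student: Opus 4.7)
The plan is to establish the first sentence by a Koszul computation and then reduce the second sentence to it, by showing that any net $V$ with $H(R/(V))$ eventually zero must already be a complete intersection.

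For the first sentence, I would invoke the standard Koszul resolution. If $q_1,q_2,q_3$ is a regular sequence of quadrics spanning $V$, then the Koszul complex is exact, giving the minimal free resolution
\[
0 \to R(-6) \to R(-4)^3 \to R(-2)^3 \to R \to R/(V) \to 0.
\]
Additivity of the Hilbert series then yields
\[
H_{R/(V)}(t) = \frac{(1-t^2)^3}{(1-t)^3} = (1+t)^3 = 1 + 3t + 3t^2 + t^3,
\]
so the Hilbert function is $(1,3,3,1,0)$.

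For the second sentence, suppose $V$ is a net of conics such that the sequence $H(R/(V))$ is eventually zero. Then $R/(V)$ is a finite-dimensional $\sf k$-algebra, hence Artinian, so the ideal $(V)$ is $\m$-primary and therefore of height $3$ in the $3$-dimensional Cohen--Macaulay ring $R = {\sf k}[x,y,z]$. The ideal $(V)$ is generated by the three quadrics forming any basis of $V$, and in a Cohen--Macaulay ring an ideal generated by as many elements as its height is automatically a complete intersection: the three quadrics form a regular sequence. The first step then applies and forces $H(R/(V)) = (1,3,3,1,0)$.

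I do not anticipate a real obstacle here: the lemma is a repackaging of the Koszul resolution together with the standard commutative algebra fact that in a Cohen--Macaulay ring, $d$ elements generating an ideal of height $d$ form a regular sequence. The only point that would deserve a sentence of comment is the passage from ``Hilbert function eventually zero'' to ``Artinian,'' which is immediate since the graded pieces of $R/(V)$ are finite-dimensional.
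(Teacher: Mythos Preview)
Your proof is correct. The paper itself gives no proof of this lemma, introducing it only with the phrase ``The following result is well-known,'' so there is nothing to compare against; your argument via the Koszul resolution for the first sentence and the Cohen--Macaulay height-equals-number-of-generators criterion for the second is exactly the standard justification the paper is implicitly invoking.
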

Thus, for example, $H(R/(V))=(1,3,3,2,1,0)$ cannot occur. 
\begin{lemma}[Hilbert functions possible for a net of ternary cubics]\label{elimlem} Let $V$ be a net of conics in $R={\sf k}[x,y,z]$. Then the Hilbert function
$H(R/(V))$ determined by the ideal generated by $V$ must be one of the following:
\begin{equation}\label{HFlisteq}
 (1,3,3,1,0), (1,3,3,\overline{1}), (1,3,3,\overline{2}),(1,3,3,\overline{3}) \text { or } (1,3,3,4,5,\ldots),
\end{equation}
the last only occurring for a net isomorphic to $V=xR_1=\langle x^2,xy,xz\rangle$ (\#2a).
The sequences $(1,3,3,2,\overline{1}),  (1,3,3,3,2,\overline{1}), (1,3,3,3,\overline{2})$ do not occur as Hilbert functions $H(R/(V))$. The sequence ending in $\overline{1}$ occurs only for the $\Pgl(3)$ isomorphism classes \#8a, \#7b in Figure \ref{1fig}; that ending in $\overline{2}$ occurs only for \#7a,6b,6c,5b  in  Figure \ref{1fig}; and that ending in $\overline{3}$
occurs only for the isomorphism classes \#6a,5a,4,2b.
\end{lemma}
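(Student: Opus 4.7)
The plan is to analyse $H_k=H(R/(V))_k$ degree by degree, combining Macaulay's inequality (Lemma~\ref{Maclem}), Gotzmann persistence (Proposition~\ref{Gotzmannprop}), and the stabilization statement Lemma~\ref{regslem}, and then match each allowed tail against the orbit representatives of Figure~\ref{1fig}.

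First, $H_0=1$ and $H_1=3$ since $(V)$ is generated in degree two, and $H_2=6-3=3$ since $\dim V=3$. Macaulay applied to $V\subset R_2$ with $d=3$ gives $\dim R_1V\geq f(3,2,3)=6$, while $\dim R_1V\leq 3\cdot 3=9$ trivially, so $H_3\in\{1,2,3,4\}$. The extremal value $H_3=4$ is exactly the Macaulay-extremal case $\dim R_1V=6$; Proposition~\ref{Gotzmannprop} then forces persistence at every subsequent degree, yielding $H_k=k+1$ for $k\geq 2$, i.e.\ $H=(1,3,3,4,5,\ldots)$, and identifies $V$ up to $\Pgl(3)$ with the lex segment $\langle x^2,xy,xz\rangle=xR_1$, the orbit \#2a.

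When $H_3\in\{1,2,3\}$, the scheme $\Z_V$ is zero-dimensional of length $\ell=\ell(V)\leq 3$. I would iterate Macaulay on $W_k=VR_{k-2}\subset R_k$ to control $H_{k+1}$ from above by $H_k$, and then invoke Lemma~\ref{regslem}: once $H_i=H_{i+1}=s$ with $i\geq s$, the function stabilizes at $s$. Comparing with the saturation of $(V)$ gives $\lim_k H_k=\ell$. Together with the preceding CI Lemma (which produces exactly $(1,3,3,1,0)$ for complete intersections, and forbids any other tail ending in $0$), this narrows the possible sequences to $(1,3,3,1,0)$, $(1,3,3,\overline{1})$, $(1,3,3,\overline{2})$, $(1,3,3,\overline{3})$, plus the already-handled $(1,3,3,4,5,\ldots)$ for \#2a.

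The main obstacle is ruling out the forbidden intermediate drops $(1,3,3,2,\overline{1})$, $(1,3,3,3,2,\overline{1})$, $(1,3,3,3,\overline{2})$. My plan is to apply Gotzmann persistence not to $V$ itself but to the subspace $W_3=VR_1\subset R_3$ of dimension $10-H_3\in\{7,8,9\}$: a strict drop $H_{k+1}<H_k$ with positive stable value would force $W_k$ into a Macaulay-extremal position, and persistence would then force $\dim W_{k+j}$ to follow the Macaulay growth of the corresponding lex segment of $R_3$. Those growth sequences have quotient Hilbert function stabilizing at $3$, $2$, $1$ for $\dim W_3=7,8,9$ respectively, never dropping below the value at the first extremal step, which contradicts the hypothesised smaller tail. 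A direct verification is also available: a length-$\ell$ saturated ideal $J\supseteq(V)$ satisfies $\dim J_k=\binom{k+2}{2}-\ell$ for $k\geq \ell$ (Lemma~\ref{regslem}), so $H_k\geq\ell$ at those degrees, and this matches the Macaulay upper bound only with $H_k=\ell$.

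Finally, the orbit-by-orbit attribution (tails $\overline{1},\overline{2},\overline{3}$ assigned to \#8a,\#7b; \#7a,\#6b,\#6c,\#5b; and \#6a,\#5a,\#4,\#2b respectively) reduces to computing $\dim R_1V$ on the representatives listed in Figure~\ref{1fig} and cross-referencing with the scheme lengths displayed there: e.g.\ for \#8a one checks $\dim R_1\langle xy,x^2+yz,y^2+xz\rangle=9$ (so $H_3=1$, tail $\overline{1}$), for \#6a one checks $\dim R_1\langle xy,xz,yz\rangle=7$ (so $H_3=3$, tail $\overline{3}$), and so on for the remaining representatives.
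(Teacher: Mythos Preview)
Your overall framework (Macaulay bound on $H_3$, Gotzmann persistence for the $H_3=4$ case identifying \#2a, the CI lemma for the tail $0$) is fine and does produce the candidate list \eqref{HFlisteq}. The gap is in the paragraph where you rule out $(1,3,3,2,\overline{1})$, $(1,3,3,3,2,\overline{1})$, $(1,3,3,3,\overline{2})$. The assertion that ``a strict drop $H_{k+1}<H_k$ with positive stable value would force $W_k$ into a Macaulay-extremal position'' is not correct: Macaulay's inequality is a lower bound on $\dim R_1W_k$, so it gives an \emph{upper} bound on $H_{k+1}$, and a strict drop is perfectly compatible with $W_k$ being far from lex-extremal. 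Concretely, for $(1,3,3,2,\overline{1})$ one has $\dim W_3=8$, and $f(3,3,8)=13$, so Macaulay only says $H_4\le 2$; both $H_4=1$ and $H_4=2$ are allowed, and nothing in Gotzmann persistence is triggered. Your ``direct verification'' that $H_k\ge\ell$ for $k\ge\ell$ is true but also does not help: for $(1,3,3,2,\overline{1})$ with $\ell=1$ the inequality $H_3\ge 1$, $H_4\ge 1$ is satisfied, and for $(1,3,3,3,\overline{2})$ with $\ell=2$ the inequality $H_3\ge 2$, $H_4\ge 2$ is again satisfied. So neither argument excludes these sequences.

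The paper's proof takes a different route and does not attempt a classification-free argument here. It uses the orbit classification of Figure~\ref{1fig} together with semicontinuity (Lemma~\ref{openlem}): for each tail $\overline{\ell}$ it locates the most special orbit of that scheme length (\#7b for $\ell=1$, \#5b for $\ell=2$), computes $H_3$ for that representative directly, and then invokes upper-semicontinuity of $H_3=10-\dim R_1V$ to conclude that every net with scheme length $\ell$ has $H_3$ no larger. Since \#7b already has $H_3=1$ and \#5b already has $H_3=2$, any sequence with tail $\overline{1}$ and $H_3>1$, or tail $\overline{2}$ and $H_3>2$, is impossible. This is what your purely Macaulay/Gotzmann argument was trying to replace; if you want a classification-free proof you would need a genuinely new ingredient beyond Lemmas~\ref{Maclem}, \ref{regslem} and Proposition~\ref{Gotzmannprop}.
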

\begin{proof} We can eliminate $H(R/(V))= (1,3,3,a, \overline{1}), a>1$, by considering 
 Figure \ref{1fig}
\#7b, the most special with tail $\overline{1}$ whose Hilbert function is $H(R/(V))=(1,3,3,\overline{1})$. Any family specializing to it must have $a=1$, as $\dim R_1V=10-H(R/(V))_3$ can only decrease under specialization, as a consequence of Lemma \ref{openlem}.\par
  We may eliminate $(1,3,3,3,\overline{2})$ by considering \#5b of  Figure \ref{1fig}, $V=(y^2,z^2,xy)$, which is the most special for which  $H(R/(V))$ ends in $ \overline{2}$, so it will have the termwise highest Hilbert function: and here $H(R/(V))_3=2,$ not $3$.\par
  The last statement can be read from the scheme lengths in Figure~\ref{1fig}.
\end{proof}
\vskip 0.2cm\noindent
{\bf Question}.  Do we have all but the last statement of Lemma  \ref{elimlem} over an arbitrary field? The Figure \ref{1fig} requires $\cha {\sf k}\ne 2,3$ and $k$ algebraically closed (see remark before Theorem A1 in \cite{AEI})  but the G.~Gotzmann theorem does not have a restriction on $\cha {\sf k}$, and holds more generally for $B$-flat graded modules over a ring $B[x_1,\ldots, x_r]$ (see \cite[Theorem C.17 and C.29]{IK}).\par\vskip 0.2cm
Finally, we relate the ancestor ideal of a codimension one vector space $V\subset R_j$ to the Gorenstein ideal determined by $V$.
\begin{lemma}\cite[\S 77ff]{Mac},\cite[Lemma 2.17]{IK}\label{ancgorlem} Let $A=R/I$ be an Artinian Gorenstein algebra of socle degree $j$, and let $V=I_j\subset R_j$. Then we have
\begin{equation}
I=\overline{V}+\m^{j+1}.
\end{equation}
\end{lemma}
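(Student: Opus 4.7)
The plan is to verify the equality degree by degree. By Definition \ref{ancestordef}, the degree-$i$ component of $\overline{V}$ is exactly $\overline{V}_i=V\!:\!R_{j-i}$ for $1\le i\le j$, while $(V)$ contributes only in degrees $\ge j$. Likewise $(\m^{j+1})_i=R_i$ for $i\ge j+1$ and vanishes otherwise. Since $A$ has socle degree $j$, we have $A_k=0$ for $k>j$, hence $I_k=R_k$ for $k\ge j+1$; this disposes of the high-degree part and simultaneously shows $\m^{j+1}\subset I$. Also $V=I_j$ already lies in $I$, so $(V)\subset I$.

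Next I would show the easy inclusion $\overline{V}+\m^{j+1}\subset I$. In degrees $1\le i\le j$, if $h\in I_i$ then $R_{j-i}h\subset I_j=V$, so $h\in V\!:\!R_{j-i}=\overline{V}_i$; combined with the previous paragraph this gives $I_i\supset \overline{V}_i$ trivially in the direction needed. (Actually this direction is the reverse; what I need for $\overline{V}+\m^{j+1}\subset I$ is $\overline{V}_i\subset I_i$, handled below; $(V)\subset I$ and $\m^{j+1}\subset I$ are already clear.)

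The substantive inclusion is $\overline{V}_i\subset I_i$ for $1\le i<j$. Take $h\in R_i$ with $R_{j-i}h\subset V=I_j$; I want $\bar h=0$ in $A_i$. The hypothesis translates to $A_{j-i}\cdot \bar h=0$ in $A_j$. Now the Artinian Gorenstein hypothesis with $\Soc(A)=A_j\cong \mathsf{k}$ is equivalent to the statement that for each $0\le i\le j$ the multiplication pairing
\[
A_i\times A_{j-i}\longrightarrow A_j\cong \mathsf{k}
\]
is perfect. Applying this, the vanishing of $\bar h\cdot A_{j-i}$ forces $\bar h=0$, i.e.\ $h\in I_i$, as required. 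The case $i=j$ is trivial since $\overline{V}_j=V=I_j$.

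The only step requiring care is the invocation of the perfect Gorenstein pairing, which is standard and is the whole content of the Gorenstein hypothesis in the Artinian graded setting. For readers preferring the inverse-systems viewpoint of \cite{Mac, IK}, one may alternatively write $I=\Ann(F)$ for a generator $F\in S_j$ of the Macaulay dual, and translate the condition $R_{j-i}h\subset V$ into $h\circ F=0$ via the contraction action \eqref{contracteq}, recovering the same conclusion directly and thereby matching the references cited in the lemma.
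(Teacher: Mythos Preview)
Your argument is correct. The paper itself gives no proof of this lemma, merely citing \cite{Mac} and \cite[Lemma~2.17]{IK}; your proof via the perfect Gorenstein pairing $A_i\times A_{j-i}\to A_j\cong\mathsf{k}$ is exactly the standard one underlying those references, and the alternative inverse-systems formulation you sketch at the end is likewise the viewpoint of \cite{IK}.

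One presentational remark: your second paragraph announces the inclusion $\overline{V}+\m^{j+1}\subset I$ but then proves $I_i\subset\overline{V}_i$ instead, which you acknowledge parenthetically. It would read more cleanly to separate the two inclusions explicitly: the inclusion $I_i\subset\overline{V}_i$ for $i\le j$ is immediate from $I$ being an ideal (if $h\in I_i$ then $R_{j-i}h\subset I_j=V$), and the reverse inclusion $\overline{V}_i\subset I_i$ is the substantive step requiring the Gorenstein pairing, which you handle correctly in the third paragraph. Also note that the pairing argument covers $i=0$ as well (or one observes directly that $\overline{V}_0=V:R_j=0$ since $1\notin I$), so the full range $0\le i\le j$ is accounted for.
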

We will see that this prevents \#2a of Figure \ref{1fig} from being the degree-2 component of a Gorenstein algebra of Hilbert function $T=(1,3^k,1), k\ge 2$ (Lemma \ref{nonemptyGorlem}).
\section{Deformation of algebras in $G_T$ to Gorenstein algebras.}
We recall our Theorem \ref{1thm}:  \par\noindent
\setcounter{thm}{0} 
\begin{thm} Let $T=(1,3^k,1), k\ge 2$. For $ k=2$, the family $G_T$ of graded quotients of $R={\sf k}[x,y,z]$ having Hilbert function $T$, is the Zariski closure of the family $CI_T$ of graded complete intersection quotients of $R$ having Hilbert function $T$; and $G_T$ has dimension $9$.\par When $k\ge 3$ then $G_T$ is in the closure of $\Gor_{\#6a}(T)$, and $G_T$ has dimension eight.  \par In each case $G_T\subset \overline{\Gor(T)}$ and is irreducible.
\end{thm}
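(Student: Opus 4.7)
The plan is to stratify $G_T$ by the $\Pgl(3)$-orbit of the net $V=I_2$, and to show that a single stratum contains all of $G_T$ in its closure. The first step is to identify which orbits of nets can appear as the degree-$2$ piece of some $A=R/I\in G_T$. For $k\ge 3$, the requirement $H(R/I)_j=3$ for $j=2,\ldots,k$ forces $H(R/(V))_j\ge 3$ for all such $j$; combined with Lemma \ref{elimlem} and the semicontinuity in Lemma \ref{openlem}, this reduces the admissible orbits to the scheme-length-$3$ nets \#6a, \#5a, \#4, \#2b, together with the line-scheme orbit \#2a. For $k=2$ every orbit of nets is \emph{a priori} admissible, so the reduction must be carried out by direct deformation.

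Next, for each admissible orbit I plan to show that the sub-family $G_V(T)\subset G_T$ of algebras with $I_2=V$ exactly is irreducible by an explicit parameterization. Once $V$ is fixed, the inclusion $R_{j-2}V\subset I_j$ together with the Hilbert function pins down $\dim I_j$ in every degree, leaving $I_j$ to be chosen in a Grassmannian fiber of known dimension; patching these fibers degree by degree yields an irreducible rational variety. Sweeping by $\Pgl(3)$ then gives irreducibility of $G_{\{V\}}(T)$ and the dimensions claimed in Theorem \ref{4thm}.

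The heart of the argument is Step~3: showing each non-generic stratum $G_{\{V\}}(T)$ lies in the Zariski closure of a single generic one. For $k\ge 3$ the generic stratum is $\Gor_{\#6a}(T)$, whose dimension I compute as $6+2=8$: six from the orbit of \#6a in Figure \ref{1fig}, plus two parameters for the dual generator $F=X^j+\alpha_1 Y^j+\alpha_2 Z^j$ with $I=\Ann F$ (using Lemma \ref{ancgorlem} to rule out \#2a as a Gorenstein degree-$2$ component). Starting from a deformation $V(t)\to V(0)$ along an arrow of Figure \ref{1fig} at constant scheme length $3$, I will lift to a deformation $I(t)\to I(0)$ of full ideals with Hilbert function $T$ constant, choosing generic higher-degree pieces that track the net deformation; the exceptional case $V=\#2a$ is handled separately by showing its $G_V(T)$ also specializes into \#6a. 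For $k=2$ the generic stratum is $CI_T$, open of dimension $9$ in $\Grass(R_2,3)$, and each other stratum is degenerated into it by an analogous explicit one-parameter family, using that every non-CI algebra of Hilbert function $(1,3,3,1)$ has its degree-$3$ relations perturbable into complete intersection relations.

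The principal obstacle is the lifting in Step~3: a deformation of nets does not automatically extend to a deformation of full ideals with constant Hilbert function, because relations in degrees $3,\ldots,k+1$ may jump, and one must simultaneously control them and the socle generator in degree $k+2$. I expect to overcome this using the ancestor-ideal description $I=\overline{V}+\m^{j+1}$ from Lemma \ref{ancgorlem} and the fact that $\overline{V(t)}$ behaves controllably in families of constant scheme length, which is the technical content reserved for Theorems \ref{schl3thm}, \ref{k=2GTthm}, and \ref{kge3thm}. Irreducibility of $G_T$, and the containment $G_T\subset\overline{\Gor(T)}$, then follow once $G_T$ is realized as the closure of a single irreducible Gorenstein stratum.
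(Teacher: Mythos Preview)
Your overall strategy---stratify $G_T$ by the $\Pgl(3)$-orbit of $V=I_2$, identify the admissible orbits, show each $G_{\{V\}}(T)$ is irreducible, and then deform every stratum into a single generic one---is exactly the paper's approach, and your dimension counts for the generic strata ($6+2=8$ for $k\ge 3$, and $9=\dim\Grass(R_2,3)$ for $k=2$) are correct.

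Where your proposal diverges from the paper is in the anticipated difficulty and the tool you reach for in Step~3. You flag the lifting of a net deformation to an ideal deformation as the ``principal obstacle'' and propose to control it via the ancestor-ideal description $I=\overline{V}+\m^{j+1}$. In fact, for scheme length~$3$ the paper's argument (Theorem~\ref{schl3thm}) is much simpler and does not use ancestor ideals: since $\cod(V)_u=\cod I_u=3$ for $2\le u\le j-1$, one has $I_u=(V)_u$ automatically in that range, so the entire ideal is determined by $V$ together with the single line $\langle F\rangle\subset ((V)_j)^\perp$ dual to $I_j$. Deforming $A$ to a Gorenstein algebra is then just moving $F$ to a generic element of the three-dimensional space $((V)_j)^\perp$, which works for \#6a, \#5a, \#4 and fails only for \#2b (handled by letting $V$ itself move to \#4). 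No delicate control of relations in intermediate degrees is needed.

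Two further points where the paper's execution is sharper than your sketch. First, the exceptional orbit \#2a is not handled by ancestor ideals either: the paper observes that $(V,f)$ with $f\in{\sf k}[y,z]_3$ cuts out a length-$3$ scheme on a line, and then invokes the irreducibility of $\Hilb^3(\mathbb P^2)$ (smoothability) to deform that scheme to three non-collinear points, landing in \#6a. Second, for $k=2$ the paper does not deform every stratum directly to $CI_T$; it first uses the constant-scheme-length lifting (your Step~3 idea, realized as Theorem~\ref{scheme2deformthm}) to reduce to the top orbit in each scheme-length chain, so that only \#7a and \#6a require explicit one-parameter deformations to complete intersections. Your phrase ``degree-$3$ relations perturbable into complete intersection relations'' is the right intuition, but the paper carries this out by writing down concrete families $I(t)=(f_1(t),f_2(t),f_3(t))$ and checking by hand that $\m^4\subset I(t)$ and that the extra generators become redundant for suitable~$t$.
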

\vskip 0.2cm
 We show this below in Theorem \ref{schl3thm} (scheme length of $V=I_2$ is three, Section \ref{3.2sec}), Theorem \ref{k=2GTthm} (case $k=2$, Section \ref{3.4sec}) and Theorem \ref{kge3thm} (case $k\ge 3$, Section \ref{3.6sec}).\vskip 0.2cm
\subsection{The families $\Gor_V(T), \Gor_{\{V\}}(T)$ and $G_V(T),  G_{\{V\}}(T)$.}
\begin{definition}\label{GVdef}
Let $V$ be a net of planar conics. We denote by $\Gor_V(T)$ the family of Artinian Gorenstein
quotients  $A=R/I$ of $R$ having Hilbert function $H(A)=T$, and satisfying $I_2=V$. We denote by $\Gor_{\{V\}}(T)$
 the family of Artinian Gorenstein quotients $A=R/I$ of $R$ for which $I_2 \cong V$ - is equivalent to $V$ under $\Pgl(3)$ isomorphism.  We define $G_V(T)$ and $G_{\{V\}}(T)$ similarly. 
\end{definition} 
\begin{lemma}\label{irredlem} Let $\sf k$ be algebraically closed. Let $T=(1,3^k,1), k\ge 2,$ and $V$ be a net of conics.  Then  $\Gor_V(T), \Gor_{\{V\}}(T), G_{V}(T)$ and $ G_{\{V\}}(T)$ are irreducible varieties.
\end{lemma}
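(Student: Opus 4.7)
The plan is to exhibit $G_V(T)$ as an open subscheme of an iterated Grassmannian bundle, then deduce the Gorenstein version as an open subvariety and the $\Pgl(3)$-orbit versions by a group-action argument. Fix $V\subset R_2$. An element $A=R/I\in G_V(T)$ is determined by its graded pieces $I_3,I_4,\ldots,I_{k+1}$, since $I_2=V$ is fixed and $I_j=R_j$ is forced for $j\ge k+2$. Each $I_j$ must satisfy $I_j\supset R_1I_{j-1}$ and $\dim_{\sf k} I_j=h_j-T_j$, where $h_j=\dim_{\sf k} R_j$, so $I_j$ lies in the Schubert subvariety of $\Grass(h_j-T_j,R_j)$ consisting of subspaces containing $R_1I_{j-1}$. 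When $\dim R_1I_{j-1}$ has a fixed value, this Schubert subvariety is isomorphic to the Grassmannian $\Grass(h_j-T_j-\dim R_1I_{j-1},\,R_j/R_1I_{j-1})$, which is irreducible.

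Concretely, I would introduce the incidence variety
\begin{equation*}
U_V=\Bigl\{(I_3,\ldots,I_{k+1})\in\prod_{j=3}^{k+1}\Grass(h_j-T_j,R_j)\,:\,R_1V\subset I_3,\ R_1I_{j-1}\subset I_j\text{ for }j\ge 4\Bigr\},
\end{equation*}
a closed subvariety of a product of Grassmannians. By Lemma~\ref{openlem} the extra condition $R_1I_{k+1}=R_{k+2}$ (which secures $H(R/I)_j=0$ for all $j\ge k+2$) is open, and it cuts $G_V(T)$ out of $U_V$. To see $U_V$ is irreducible I would project onto the first factor and induct on $k$: the image is the Schubert variety of subspaces of $R_3$ containing $R_1V$ (itself a Grassmannian), and the fibre over each $I_3$ is the analogous incidence variety one step shorter. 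The crucial point is that on $U_V$ the function $I_{j-1}\mapsto\dim_{\sf k} R_1I_{j-1}$ is constant at each stage $j$: upper semi-continuity (Lemma~\ref{openlem}) gives one inequality, while $R_1I_{j-1}\subset I_j$ together with $\dim I_j=h_j-T_j$ gives the reverse bound, and a short case analysis indexed by the scheme length $\ell(V)$ of Figure~\ref{1fig} and the Hilbert-function tails of Lemma~\ref{elimlem} forces the two to coincide. Hence $U_V$ is an iterated Grassmannian bundle with irreducible fibres of constant dimension, so $U_V$ and its open subscheme $G_V(T)$ are irreducible.

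The Gorenstein condition on an Artinian graded algebra of fixed Hilbert function $T=(1,3^k,1)$ is equivalent to the socle being concentrated in degree $k+1$, which in turn is the injectivity of the multiplication maps $A_i\to A_{i+1}^{\oplus 3}$ by $x,y,z$ for each $i<k+1$; this is a finite collection of open rank conditions on the coefficients of $I$. Therefore $\Gor_V(T)$ is an open subvariety of $G_V(T)$ and is irreducible (possibly empty, which we treat as trivially irreducible). Finally, $G_{\{V\}}(T)$ is the image of the morphism $\Pgl(3)\times G_V(T)\to G_T$ induced by the $\Pgl(3)$-action on $R$, and $\Gor_{\{V\}}(T)$ is the image of the same morphism restricted to $\Pgl(3)\times\Gor_V(T)$. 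Since $\Pgl(3)$ is irreducible and a Cartesian product of irreducible varieties is irreducible, the images are irreducible as continuous images of irreducible varieties.

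The hard part is the constancy of $\dim_{\sf k} R_1I_{j-1}$ on $U_V$. In each scheme-length stratum the verification is short: for $\ell(V)=3$ the pieces $I_j$ for $2\le j\le k$ are forced to equal $(V)_j$ since both have codimension $3$ in $R_j$; for $V=\,$\#2a the family of $I_3$'s is parametrised by a $\mathbb{P}^3$ of cubic generators modulo $R_1V$, and a direct computation shows $\dim R_1I_{j-1}$ is independent of the chosen generator; and the exceptional Hilbert sequences arising only when $k=2$ (scheme lengths $0,1,2$) involve at most one free stage, handled by inspection. With these checks in hand, the iterated Grassmannian bundle structure delivers the four irreducibility statements at once.
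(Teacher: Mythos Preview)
Your overall strategy matches the paper's: irreducibility of $G_V(T)$ and $G_{\{V\}}(T)$ is deferred there to Theorem~\ref{dimthm}, whose proof is exactly the case-by-case analysis you outline in your last paragraph (for $\ell(V)=3$ the intermediate pieces $I_j$ are forced to equal $(V)_j$; for $V=\#2a$ one first picks a degree-three generator in a $\mathbb P^3$ and then a point of $\mathbb P^2$; for $k=2$ and small scheme length there is at most one free stage). Your iterated-Grassmannian-bundle language is a uniform repackaging of that same computation. For $\Gor_V(T)$ the paper instead parametrizes directly by the Macaulay dual generator $F$ lying in a dense open subset of $\mathbb P\bigl(((V)_j)^\perp\bigr)$, whereas you use the equivalent open condition that the socle be concentrated in top degree; both are valid, and the group-action argument for the $\{V\}$ versions is identical in the two proofs.

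There is one slip worth fixing: you should \emph{not} impose the open condition $R_1I_{k+1}=R_{k+2}$. Given any tuple $(I_3,\ldots,I_{k+1})\in U_V$, setting $I_m=R_m$ for all $m\ge k+2$ always yields a graded ideal with $H(R/I)=T$ (the only containment to check, $R_1I_{k+1}\subset R_{k+2}$, is vacuous), so in fact $G_V(T)=U_V$. Your condition would discard genuine points of $G_V(T)$: for $V=\langle x^2,xy,y^2\rangle$ (\#2b) and $T=(1,3,3,1)$, the algebra $R/(V,xz^2,yz^2,z^4)$ (the paper's class \#2b.iii) has $I_3=\Ann(Z^3)\cap R_3$ and $z^4\notin R_1I_3$, so it fails your open condition yet belongs to $G_V(T)$. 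Once you drop the condition the argument goes through unchanged: you prove $U_V$ is irreducible, hence $G_V(T)=U_V$ is irreducible, and the remaining three statements follow as you wrote.
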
 

\begin{proof} The irreducibility of $G_{V}(T)$ and $ G_{\{V\}}(T)$ is shown in Theorem~\ref{dimthm}, page \pageref{dimthm} below: the result depends on the irreducibility of each of the families of nets in Figure \ref{1fig} and as well a review of the fiber of $G_V(T)$ over $V$. The irreducibility of $\Gor_V(T)$ follows from considering that, when non-empty (by Theorem \ref{schl3thm}ii $\Gor_V(T)$ is empty in the case of $V=$\#2b) an element of $\Gor_V(T)$ is determined by a general enough form $F$ in the vector space $ ((V)_j)^\perp$. Such $F$ are parametrized by a dense open of the projective space $\mathbb P( ((V)_j)^\perp)$.
 Alternatively, we could use the classification of Gorenstein algebras $\Gor_V(T)$ in Theorem \ref{5thm}. The variety $\Gor_{\{V\}}(T)$ is the orbit of the irreducible $\Gor_V(T)$ under the action of the irreducible projective group $\Pgl(3)$: it is the image under a morphism of the irreducible variety $\Gor_V(T)\times \Pgl(3)$, so is also irreducible.\vskip 0.2cm\par\noindent
\end{proof}

 Recall that the nets $V$ of scheme length 3 are \#6a,\#5a, \#4,and \#2b, from Figure \ref{1fig};  in
\begin{figure}[h]
\small
\begin{equation*}
\begin{array}{|c||c|c|c|c|}
\#&6a&5a&4&2b\\
\hline
V&xy,xz, yz&xy,xz,z^2&x^2+t^2yz,xy,y^2&x^2,y^2,(x+y)^2\\
((V)_j)^\perp&X^j,Y^j,Z^j&X^j,Y^j,Y^{j-1}Z&W_{\#4}&XZ^{j-1},YZ^{j-1},Z^j\\
\hline
\end{array}
\end{equation*}
\caption{Nets of scheme length three and their duals.}\label{netssl3}
\end{figure}
\normalsize
Figure \ref{netssl3} we give their dual vector spaces $((V)_3)^\perp$. Here $W_{\#4}= \langle ((tX)^2-YZ)Z^{j-2},XZ^{j-1},Z^j\rangle$. A check shows that a generic $F$ in each dual space except \#2b, satisfies $\dim_{\sf k} R_1\circ F=3$. We have in general for a vector space $K\subset R_{j-1}$ that $R_1\circ \langle R_1K\rangle^\perp\subset K^\perp\subset S_{j-1}$: thus we have for these $V$ not in \#2b that $R_1\circ F=((V)_{j-1})^\vee$. It is easy to see then that 
$\dim_{\sf k}R_u\circ F=3$ for $1\le u\le j-2$, hence that $A_F=R/\Ann F$ is Gorenstein of Hilbert function $T=(1,3^k,1)$.\par  \noindent
\begin{lemma}\label{nonemptyGorlem}
The family $\Gor_V(T)$ is nonempty exactly when either\par
a.  $\ell(V)=3, k\ge 2$ and $V$ is equivalent under $\Pgl(3)$ to \#6a,5a, or \#4 - so $V$ is not \#2b; OR\par
b. $ \ell(V)=0$ and $ k=2$. So $V$ is equivalent to \#8b,8c,7c, or \#6d.
\end{lemma}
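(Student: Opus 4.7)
The plan is to exploit Macaulay duality: any Artinian Gorenstein quotient $A=R/I$ of socle degree $j=k+1$ has the form $A_F=R/\Ann(F)$ for some $F\in S_j$, and the condition $I_2=V$ translates into $F\in ((V)_j)^\perp$ together with $\Ann(F)\cap R_2=V$ (not merely $\supseteq V$). The Hilbert function is read off as $T^F_i=\dim R_i\circ F$. The main technical tool is the inclusion $R_i\circ F\subseteq ((V)_{j-i})^\perp$, which gives the bound $\dim R_i\circ F\le H(R/(V))_{j-i}$, combined with Lemmas \ref{ancgorlem} and \ref{elimlem}.

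\emph{Sufficiency.} For (b), $V$ is a complete intersection of three conics, so $R/(V)$ is itself Artinian Gorenstein of Hilbert function $(1,3,3,1)$ and lies in $\Gor_V(T)$. For (a), the paragraph immediately preceding the lemma produces, for a generic $F\in ((V)_j)^\perp$ with $V$ equivalent to \#6a, \#5a, or \#4, an element with $\dim R_u\circ F=3$ for $1\le u\le j-2$ and $\Ann(F)\cap R_2=V$, giving an element of $\Gor_V(T)$.

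\emph{Necessity.} Suppose $A_F\in\Gor_V(T)$. First consider $k\ge 3$, so $j\ge 4$. The requirement $T_1=3$ forces $H(R/(V))_{j-1}\ge 3$, and because $j-1\ge 3$ Lemma \ref{elimlem} admits only the stable tail $\overline{3}$ (scheme length $3$) or the growing tail of \#2a. The class \#2a is excluded because $xR_1\subseteq V$ forces $x\circ F$ to be annihilated by all of $R_1$, hence $x\circ F=0$, so $F\in{\sf k}[Y,Z]_j$ and $\dim R_1\circ F\le 2$. Among scheme length three nets, \#2b is excluded by a direct computation of the dual: since $V$ contains $x^2$, $y^2$, and (in characteristic $\ne 2$) $xy=\tfrac12((x+y)^2-x^2-y^2)$, one forces $F\in\langle XZ^{j-1},YZ^{j-1},Z^j\rangle$ and verifies $\dim R_1\circ F\le 2$ for every such $F$.

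For $k=2$ the bound above is not sharp enough to exclude scheme lengths $1$ and $2$. I invoke Lemma \ref{ancgorlem}: $I=\overline{I_3}+\m^4$, so $I_2=\{h\in R_2:R_1h\subseteq I_3\}$. Since $H(A)_3=1$ gives $\dim I_3=9$ and $I_3\supseteq R_1V$ with $\dim R_1V\le 9$ when $\ell(V)\le 2$, we have $I_2\supseteq V:R_1$. The key step is then to show $V:R_1\supsetneq V$ for every non-CI net of scheme length $\le 2$, contradicting $I_2=V$. I verify this on a $\Pgl(3)$-representative of each class in Figure \ref{1fig} by exhibiting an explicit $h\in(V:R_1)\setminus V$: for instance $h=x^2$ works for \#8a and \#7b (scheme length $1$) and a short analogous computation handles each of \#7a, \#6b, \#6c, \#5b (scheme length $2$). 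The \#2a and \#2b cases for $k=2$ are dispatched exactly as for $k\ge 3$.

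The main obstacle is this $k=2$, $\ell(V)\in\{1,2\}$ subcase: the clean dimension bound $\dim R_i\circ F\le H(R/(V))_{j-i}$ leaves room for $T_1=T_2=3$, so one must combine Lemma \ref{ancgorlem} with case-by-case checks on normal forms. This makes that part less uniform than the $k\ge 3$ analysis, but reduces to a short finite verification on the representatives of Figure \ref{1fig}.
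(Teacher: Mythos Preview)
Your proof is correct. Sufficiency and the exclusion of \#2a and \#2b agree with the paper's argument (direct inspection of the dual generator $F$). The genuine divergence is in the necessity step for $k=2$ when $\ell(V)\in\{1,2\}$. The paper treats only the most general nets \#8a (for $\ell=1$) and \#7a (for $\ell=2$): it writes down $((V)_3)^\perp$ explicitly, checks $\dim R_1\circ F\le 2$ for every $F$ there, and then appeals to Theorem~\ref{2thm} (together with the implicit fact that the Gorenstein locus is open in $G_T$) to propagate emptiness to the specializations \#7b, \#6b, \#6c, \#5b. Your route through Lemma~\ref{ancgorlem} is more self-contained and avoids that forward reference. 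In fact your case-by-case check can be made uniform: since $(V)_3=(I_\Z)_3$ and $I_\Z$ is saturated, the degree-$2$ colon $((V):R_1)_2$ equals $(I_\Z)_2$, which has dimension $6-\ell\ge 4$ whenever $1\le\ell\le 2$; this immediately gives $I_2\supsetneq V$ for all six classes without exhibiting an explicit $h$.

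One notational point worth tightening: what you denote ``$V:R_1$'' is the degree-$2$ piece of the colon ideal $(V):R_1$ (equivalently $(R_1V):R_1$ inside $R_2$), not an object in $R_1$. As written it risks confusion with the degree-$1$ ancestor $\overline{V}_1=V:R_1\subset R_1$, which is how the paper itself uses this notation when it excludes \#2a.
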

\begin{proof}
 When $V$ has scheme-length $\ell(V)=3$, the duals 
 $(V_j)^\perp$ have dimension three for $j\ge 3$ and are given in Figure \ref{netssl3}: a generic $F\in (V_j)^\perp$, evidently satisfies $\dim_{\sf k}R_1\circ F=3$ except for $V=$\#2b where $\dim_{\sf k}R_1\circ F=2$. In the former case, evidently $\dim_{\sf k}R_2\circ F=3$, and Lemma \ref{elimlem} implies that $H(R/\Ann F)=T$.\par When the scheme length of $V$ is zero, that is for each $V$ congruent to \#8b,8c,7d,6d  the algebra $R/(V)$ is a complete intersection.\par
 When the scheme length of $V$ is one, it is a specialization of \#8a, for which $(V_j)^\perp=Z^j$ for $j\ge 3$, and $\dim_{\sf k}R_1\circ Z^j=1$, so $\Gor_V(T)$ is empty.  When the scheme length of $V$ is two, it is a specialization of a family isomorphic to \#7a of Figure \ref{1fig}: but for $V=$\#7a $(V_j)^\perp=\langle Y^j,Z^j\rangle$ for $j\ge 3$: again, for $F$ generic in $(V_j)^\perp$, $\dim_{\sf k}R_1\circ F=2$, so $\Gor_V(T)$ is empty. By theorem \ref{2thm} $\Gor_V(T)$ is empty for all specializatioins $V$ of \#7a. \par
 When $V=$\#2a, $\langle  xR_1\rangle$ then by Lemma \ref{ancgorlem} $A=R/I$ with $I\supset x=V:R_1,$ contradicting $H(A)_1=3$; so $\Gor_{\#2a}(T)=\emptyset$.
 \end{proof}
\subsection{Closures of strata $G_{V^\prime}(T)$.}\label{3.2sec}

We here show\vskip 0.2cm\par\noindent
\begin{thm} Assume that the net $V\in \Grass(R_2,3)$ is in the Zariski closure of nets isomorphic to $V^\prime$, and that the scheme length $\ell(V)=\ell(V^\prime)$ and neither $V$ nor $V^\prime$ is \#2a. 
Assume that the net $V\in \Grass(R_2,3)$ is in the Zariski closure of nets isomorphic to $V^\prime$, and that the scheme length $\ell(V)=\ell(V^\prime)$ and neither $V$ nor $V^\prime$ is \#2a. Let $A=R/I$ be an algebra of Hilbert function $T=(1,3^k,1)$ such that $I_2\cong V$. Then $A$ is in the closure of a family $A(x)=R/I(x)$ of algebras having $I_2(x)\cong V^\prime$, that is $A=\lim_{x\to x_0} A(x), x\in X\backslash x_0$. We may take $X$ to be a curve.
\end{thm}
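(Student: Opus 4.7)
The plan is to lift the specialization of nets $V' \leadsto V$ to a specialization of graded Artinian algebras $A(x) \to A$ of Hilbert function $T$. The key observation is that, since $\ell(V) = \ell(V')$ and neither is \#2a, Lemma~\ref{elimlem} forces $H(R/(V(x)))$ to be constant along any family of nets of constant scheme length, so that the ideal family $(V(x))$ is itself flat. The remaining data — the passage from $(V)$ to the finer ideal $I$ — lives in finitely many degrees and can be lifted using Grassmannian bundles over the parameter curve $X$.

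Step 1. Since $V \in \overline{\PGl(3) \cdot V'}$, choose an irreducible curve $C$ through $V$ inside this orbit closure meeting $\PGl(3) \cdot V'$; after normalizing and restricting, we obtain a smooth affine curve $X$ with distinguished point $x_0$ and a flat family $\{V(x)\}_{x \in X}$ of $3$-planes in $R_2$ with $V(x_0) = V$ and $V(x) \cong V'$ for $x \ne x_0$. By Lemma~\ref{elimlem}, $H(R/(V))$ depends only on $\ell(V)$ among nets outside class \#2a, and by Lemma~\ref{openlem} the dimensions $\dim_{\sf k}(V(x))_j$ are upper-semicontinuous; combining these, $\dim_{\sf k}(V(x))_j$ is constant on $X$ in every degree $j$. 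Hence $\mathcal{E}_j := R_j / (V(x))_j$ is a vector bundle on $X$ of rank $c_j := H(R/(V))_j$.

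Step 2. Write $I = (V) + L$ with $L = \bigoplus_j L_j$, where $L_j$ is a vector-space complement of $(V)_j$ inside $I_j$; by dimension count, $L_j \ne 0$ exactly when $c_j > T_j$, and then $\dim L_j = c_j - T_j$. For $\ell(V) = 3$ (the only possibility when $k \ge 3$), only $L_{k+1}$ is nonzero, of dimension $2$; for $k = 2$ with $\ell(V) \le 2$, small explicit $L_j$'s are needed in degrees $3$ and/or $4$, read off from $H(R/(V)) = (1,3,3,\overline{\ell})$. For each relevant $j$, the image $\overline{L_j} \in \mathcal{E}_{j, x_0}$ is a ${\sf k}$-point of the Grassmannian $\Gr(\dim L_j,\, c_j)$. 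Form the Grassmannian bundle $\mathcal{G}_j := \Gr(\dim L_j,\, \mathcal{E}_j) \to X$: this is smooth, and on any affine neighborhood of $x_0$ where $\mathcal{E}_j$ trivializes, $\overline{L_j}$ extends to a regular section $\overline{L_j(x)}$ of $\mathcal{G}_j$. Lifting this section to a family of subspaces $\widetilde{L}_j(x) \subset R_j$ via any local splitting of $R_j \otimes \mathcal{O}_X \twoheadrightarrow \mathcal{E}_j$, set $I(x)$ to be the ideal of $R$ generated by $V(x)$, the $\widetilde{L}_j(x)$, and $\m^{k+2}$, and let $A(x) := R/I(x)$. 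By construction $A(x_0) = A$ and $I(x)_2 = V(x) \cong V'$ for $x \ne x_0$.

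The main obstacle is to confirm $H(A(x)) = T$ on a full neighborhood of $x_0$, not merely at the closed point. In each degree $j$, the inequality $\dim I(x)_j \le \dim(V(x))_j + \dim \widetilde{L}_j(x) = \dim R_j - T_j$ is automatic, while equality is equivalent to $\widetilde{L}_j(x) \cap (V(x))_j = 0$, an open transversality condition satisfied at $x_0$; shrinking $X$ once more ensures it holds throughout. The two openness facts — constancy of $\dim(V(x))_j$ (via Lemma~\ref{elimlem}) and transversality of the lifted complements — are the crux. In the $k = 2$ low-scheme-length cases a short bookkeeping step confirms that the added $\widetilde{L}_j(x)$ in low degrees generate the required elements in higher degrees via multiplication by $R_1$; this follows from the explicit form of $H(R/(V))$ and the generic surjectivity of maps such as $R_1 \otimes \widetilde{L}_3(x) \to R_4/(V(x))_4$.
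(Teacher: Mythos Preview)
Your argument is correct and follows essentially the same route as the paper (Theorems~\ref{schl3thm} and~\ref{scheme2deformthm}): both rest on the observation that constancy of $\ell(V(x))$ forces $H(R/(V(x)))$ to be constant (Lemma~\ref{elimlem}), so that $R_j/(V(x))_j$ is a vector bundle and the extra generators can be carried along with transversality preserved on an open set. The only cosmetic difference is that you lift sections of a Grassmannian bundle, whereas the paper simply keeps the same elements $f$ (or $f,g$) in $R_3$ and checks that $f\notin R_1V_w$ is open---a constant section of your bundle; your final ``bookkeeping'' paragraph about $R_1\otimes\widetilde{L}_3(x)\to R_4/(V(x))_4$ is unnecessary since you have already thrown $\m^{k+2}$ into $I(x)$.
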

 We show Theorem \ref{2thm} in Theorem \ref{schl3thm} (case scheme length of $V$ is three - including the case $k\ge 3$, Section \ref{3.2sec}, and in Theorem~\ref{scheme2deformthm} (case $k=2$, Section \ref{3.4sec}.\par
Our proof of the following result uses the classification of nets of conics of \cite{AEI}.
\begin{theorem}\label{schl3thm}
Let $T=(1,3^k,1), k\ge 2$ and fix a net of conics $V$ of scheme length three, and assume the algebra $A=R/I$ satisfies $A\in G_T$, and $A=R/I$ where $I_2=V$.\par
(i). Assume further $V$ is not isomorphic to the class \#2b of Figure~\ref{1fig}. Then $A\in \overline{\Gor_V(T)}.$ \par
(ii). Assume $V$ is isomorphic to \#2b of Figure \ref{1fig}. Then $\Gor_V(T)$ is empty but $A\in \overline{Gor_W(T)}$ where $W$ is the class of \#4 in Figure \ref{1fig}.
\end{theorem}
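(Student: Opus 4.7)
The plan is to exploit the rigidity imposed by scheme length three. Since $H(R/(V)) = (1,3,3,\overline 3)$ is stable, we have $\dim (V)_j = r_j - 3$ for all $j \ge 2$, and combining with $H(R/I) = T$ forces $I_j = (V)_j$ for every $2 \le j \le k$, while $I_{k+1}$ is a codimension-one subspace of $R_{k+1}$ containing $(V)_{k+1}$. Equivalently, $A = R/I \in G_V(T)$ is determined by the line $\ell_I := I_{k+1}^\perp$ inside the three-dimensional dual $((V)_{k+1})^\perp$ described explicitly in Figure \ref{netssl3}.

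For part (i), fix $F_0$ spanning $\ell_I$, and by Lemma \ref{nonemptyGorlem} choose a generic $F_1 \in ((V)_{k+1})^\perp$ for which $R/\Ann F_1 \in \Gor_V(T)$. Form the pencil $F(t) = F_0 + tF_1$; by Lemma \ref{openlem} the condition $\dim R_1 \circ F(t) = 3$ is open in $t$ and holds at $t = 1$, so it holds on a cofinite subset of $\mathbb A^1$. For such $t$, the algebra $A(t) := R/\Ann F(t) \in \Gor_V(T)$ satisfies $I(t)_j = (V)_j = I_j$ for $j \le k$, while $[F(t)] \to [F_0]$ in $\mathbb P(((V)_{k+1})^\perp)$ as $t \to 0$ gives $I(t)_{k+1} \to I_{k+1}$ in the Grassmannian. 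Hence $A = \lim_{t \to 0} A(t) \in \overline{\Gor_V(T)}$, with base curve $X = \mathbb A^1$.

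For part (ii), a direct computation with $((V)_{k+1})^\perp = \langle XZ^k, YZ^k, Z^{k+1}\rangle$ (for $V \cong $ \#2b) confirms $\dim R_1 \circ F \le 2$ throughout, so $\Gor_V(T) = \emptyset$. To produce the Gorenstein limit, first deform the net: set $V(t) = \langle x^2 + tyz,\, xy,\, y^2\rangle$, so $V(0) \cong $ \#2b and $V(t) \cong $ \#4 for $t \ne 0$; scheme length is constantly three by semicontinuity (Lemma \ref{openlem}). The dual spaces $((V(t))_{k+1})^\perp$ then form a rank-three vector subbundle of $S_{k+1}\times\mathbb A^1$. Lift $F_0$ to a section $F(t)$ with $F(0) = F_0$, and introduce an auxiliary section $G(t)$ that is generic in its fiber at some $t_1 \ne 0$. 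In the total family $F(s,t) := F(t) + sG(t)$ over $\mathbb A^2$, the locus $Z$ where genericity fails is closed, contains $\{t = 0\}$, but intersects each fiber $\{t = \text{const}\ne 0\}$ in a finite set (since $\Gor_{V(t)}(T)$ is nonempty for $t \ne 0$ by Lemma \ref{nonemptyGorlem}). For generic $a \in \sf k$, the curve $\gamma_a(\epsilon) = (a\epsilon,\epsilon)$ meets $Z$ only at $\epsilon = 0$, so $A(\epsilon) := R/\Ann F(\gamma_a(\epsilon)) \in \Gor_{V(\epsilon)}(T) \subseteq \Gor_{\{W\}}(T)$ for $\epsilon$ in a punctured neighborhood of $0$; degreewise $I(\epsilon)_j \to I_j$ (for $j \le k$ via $V(t) \to V$, and for $j = k+1$ via $F(\gamma_a(\epsilon)) \to F_0$), yielding $A \in \overline{\Gor_W(T)}$.

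The principal obstacle is part (ii): showing that the failure locus $Z \subset \mathbb A^2$, while containing the entire special fiber $\{t = 0\}$, does not swallow an open two-dimensional neighborhood of the origin. This reduces to the non-vacuousness of $\Gor_{V(t)}(T)$ for $t \ne 0$ (Lemma \ref{nonemptyGorlem}, via $V(t)\cong $ \#4) combined with the openness provided by Lemma \ref{openlem}, after which a transverse-curve argument selects the base parameter $\epsilon$ avoiding the codimension-one component $Z \setminus \{t=0\}$.
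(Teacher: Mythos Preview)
Your proof of part (i) is correct and essentially identical to the paper's: both reduce the algebra to the pair $(V,[F])$ with $F\in ((V)_j)^\perp$ via Equation~\eqref{schl3eqn}, then move $F$ to a generic element of that three-space.

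For part (ii) the underlying idea is also the same as the paper's---deform \#2b to \#4 in a one-parameter family and carry the dual generator along---but the execution differs. The paper works directly on the dual side, using the explicit moving basis $W_{\#4}(t)=\langle ((tX)^2-YZ)Z^{j-2},\,XZ^{j-1},\,Z^j\rangle$ from Figure~\ref{netssl3}, which at $t=0$ is exactly $((V_{\#2b})_j)^\perp$; it then simply asserts that $G$ is a limit of generic $F(t)$ in these fibers. Your version instead deforms the net $V(t)$ first, passes to the dual bundle, and supplies the genericity argument the paper omits, via the auxiliary section $G(t)$ and the transverse curve $\gamma_a$. Your route is more explicit about why a generic-enough section exists; the paper's route is shorter because the explicit basis makes the bundle visibly trivial.

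Two small points in your part (ii) deserve tightening. First, the claim that $Z$ meets \emph{each} fiber $\{t=\text{const}\ne 0\}$ finitely is not what follows from Lemma~\ref{nonemptyGorlem}: nonemptiness of $\Gor_{V(t)}(T)$ says only that the non-generic locus in the three-space $((V(t))_j)^\perp$ is a proper closed subset, which could still contain the particular line $\{F(t)+sG(t):s\in\mathbb A^1\}$. What you actually use (and what your choice of $G$ guarantees) is that $Z$ misses some point of the single fiber $\{t=t_1\}$; this already forces $\dim Z\le 1$, which is all the transverse-curve step needs. Second, ``$\gamma_a$ meets $Z$ only at $\epsilon=0$'' is stronger than you can arrange: for generic $a$ the intersection $\gamma_a\cap Z$ is merely finite, so $A(\epsilon)\in\Gor_{\{W\}}(T)$ for $\epsilon$ in a cofinite (not full) punctured set. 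This is still enough to place $A$ in the closure.
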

\begin{proof}  Let the algebra $A=R/I$ of Hilbert function $T$ satisfy $I_2=V$ of scheme length 3.
 Then 
\begin{equation}\label{schl3eqn} I_u=(V)_u \text{ for }  2\le u\le j-1
\end{equation}
as $(V)_u\subset I_u $  and $\cod I_u=\cod (V)_u=3 $ in $ R_u$.
Also, $I_j=\Ann\, G$ for some $G\in ((V)_j)^\perp$.  
\par\noindent
{\it Proof of (i).} Lemma \ref{nonemptyGorlem} shows that for $V$ of scheme length three, for $V$ not congruent to \#2b, for a generic $F\in V_j^\perp$, $H(R/\Ann F)=T$. Now suppose $A=R/I\in G_V(T)$ for such $V$. We have $I_j=\Ann\, G$ for some $G\in ((V)_j)^\perp$: since $G=\lim_{t\to t_0}F(t)$ where $F(t)$ is generic for $t\not=t_0$, we have  $A_G=R/I_G=\lim_{t\to t_0}A_{F(t)}$, the limit of elements in $\Gor_V(T)$.  \par\noindent
{\it Proof of (ii).} Assume that $V$ is isomorphic to \#2b in Figure \ref{1fig}. We showed in Lemma \ref{nonemptyGorlem} that $\Gor_V(T)$ is empty.  Let $A=R/I\in G_{\{V\}}(T)$: we have that  $I_j=\Ann G, G=\lim_{t\to t_0}F(t)$ where $F(t)\in
(W_j)^\perp$ for $t\not=t_0$. Here we are using the specialization from Figure \ref{1fig}: we let $t\to 0$ in $W_{\#4}= \langle ((tX)^2-YZ)Z^{j-2},XZ^{j-1},Z^j\rangle$ of Figure \ref{netssl3} above. And we note that $A_{F(t)}\in \Gor_W(T)$ for $t\not=0$. This completes the proof of (ii).
\end{proof}

	\subsection{Closures of strata $G_{V^\prime}(T)$ when $T=(1,3,3,1)$.}
Fix a net  $V$ of conics of $R={\sf k}[x,y,z]$, fix $T=(1,3,3,1)$, and recall that $\ell(V)$ is its scheme length of $V$ as in Equation \ref{leqn}. Let $A\in \Gor_V(T)$. When $\ell(V)=0$ or $\ell(V)=1$ then
$A=R/(V,\m^4)$.                                                                                                                                                                                                                                                                                                                                                          When $\ell(V)=2$, we have by Lemma~\ref{elimlem} that $(H(R/(V)))_3=2$, and since $T_3=1$there exists a non zero $f\in R_3/R_1V$ such that $I=(V,f,\m^4)$. When $\ell(V)=3$, again by Lemma~\ref{elimlem}, we have $(H(R/(V)))_3=3$ and since $T_3=1$ there exist non zero polynomials $f,g\in R_3/R_1V$ such that $I=(V,f,g,\m^4)$. The following Theorem shows that the specialization of such algebras follows the specialization pattern of their corresponding nets, as in the arrows of Figure \ref{1fig}. \par We take two approaches here to parametrization. Often $w$ is an informal parameter in the field $\sf k$. Or, more precisely, we may take $w$ to be a new variable and  write $A(w)=R[w]/I(w)$ for an element of the family $A_{\mathcal W}\to \mathcal W$, with $w\in \mathcal W\cong\mathbb A^1$. Note that the hypothesis of the next Theorem rules out $V \cong (x^2,xy,xz)$ (\#2a) - for that case see
Proposition \ref{2aprop}.
\begin{theorem}\label{scheme2deformthm}
	Let  $T=(1,3,3,1)$ and let $V_w, w\in \mathcal W$ be a family of nets of conics of \emph{finite} scheme length $\ell$ and let $V=V_{w_o}$ be a specific net in the family. Assume further that for $w\ne w_o,  V_w$ is isomorphic to a fixed net $V^\prime$. 
	  Now let the algebra $A(w_o)=R/I(w_o)\in G_T$ satisfy $I(w_o)_2=V_{w_o}$.  Then $A(w_o)$ is a specialization of a family $A(w)=R/I(w)$ of Artinian algebras, also of Hilbert function $T$, for which $I(w)_2$ for $w\ne w_0$ is isomorphic to $V^\prime$.
\end{theorem}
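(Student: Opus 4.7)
The strategy is to exploit the constancy of the scheme length $\ell$ across the family $V_w$ to simply reuse, for all $w$, the extra degree-three generators already chosen at $w=w_0$. Because scheme length being constant forces $\dim_{\sf k} R_1V_w$ to be constant (by Lemma~\ref{elimlem} and the discussion of tails in \eqref{leqn}), the dimension counts that control $H(R/I(w))_3$ will match at $w_0$ and at nearby $w$, so all the work is a linear-algebra lift together with a semicontinuity argument.

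\emph{Setup and case split.}  By hypothesis $\ell(V_w)=\ell$ is constant on $\mathcal W$, so $\dim_{\sf k}(R_3/R_1V_w)=\ell$ by Lemma~\ref{elimlem}. Per the structure described just before the theorem statement, $I(w_0)$ decomposes as
\[
I(w_0)=\begin{cases}(V_{w_0},\m^4)& \ell\in\{0,1\},\\ (V_{w_0},f,\m^4)& \ell=2,\\ (V_{w_0},f,g,\m^4)& \ell=3,\end{cases}
\]
where $f$ (resp.\ $f,g$) represent nonzero (resp.\ linearly independent) classes in $R_3/R_1V_{w_0}$. In the cases $\ell\in\{0,1\}$, simply take $I(w):=(V_w,\m^4)$; since $\dim_{\sf k}(V_w)_3=\dim_{\sf k}R_1V_w=10-\ell$ is constant and equals $R_3$ when $\ell=0$, and has codimension $1$ when $\ell=1$, the Hilbert function is identically $T$ on $\mathcal W$, and $A(w_0)=\lim_{w\to w_0}A(w)$ trivially.

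\emph{Case $\ell\in\{2,3\}$.} Keep the auxiliary generators $f$ (and $g$) fixed across the family and set
\[
I(w):=(V_w,f,\m^4)\quad\text{if }\ell=2,\qquad I(w):=(V_w,f,g,\m^4)\quad\text{if }\ell=3.
\]
At $w_0$, the image of $\{f\}$ (resp.\ $\{f,g\}$) in $R_3/R_1V_{w_0}$ is linearly independent by choice. This independence is an open condition on $w\in \mathcal W$ by Lemma~\ref{openlem} applied to the $(10-\ell+|\{f,g\}|)\times 10$ matrix of coordinates of $R_1V_w\cup\{f\}$ (or $\cup\{f,g\}$) in a fixed basis of $R_3$. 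Hence there is an open neighborhood $U\subset \mathcal W$ of $w_0$ on which $\dim_{\sf k}I(w)_3 = (10-\ell)+(\ell-1)=9$, i.e.\ $H(R/I(w))_3=1$. Since $I(w)\supset \m^4$ and $I(w)_2=V_w$ has codimension $7$, we conclude $H(R/I(w))=T$ for all $w\in U$, and $I(w)_2=V_w\cong V^\prime$ for $w\in U\setminus\{w_0\}$.

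\emph{Producing the curve and wrapping up.} Restrict to any irreducible curve $X\subset U$ through $w_0$ (for instance, the image of $\mathbb A^1\to \mathcal W$ given by a one-parameter subfamily; such a curve exists because $V^\prime$-equivalent nets are parametrized by a positive-dimensional orbit whenever $V\ne V^\prime$, otherwise the statement is vacuous and we take any curve). Then $A(x)=R/I(x)$, $x\in X$, is the desired family with $A(w_0)=\lim_{x\to w_0}A(x)$. The only real obstacle is the Hilbert-function bookkeeping: what makes everything work is that $\dim_{\sf k}R_1V_w$ is constant (a consequence of constant scheme length, not of mere semicontinuity), which in turn lets the same constants $f,g$ serve across the entire family once openness is invoked.
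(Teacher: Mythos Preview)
Your approach is essentially identical to the paper's: in each case you keep the extra degree-three generators $f$ (and $g$) fixed across the family and use that their linear independence from $R_1V_w$ is an open condition, which is exactly what the paper does. One small slip: your formula $\dim_{\sf k}(R_3/R_1V_w)=\ell$ is incorrect for $\ell=0$, where by Lemma~\ref{elimlem} $H(R/(V))=(1,3,3,1,0)$ so the codimension of $R_1V_w$ in $R_3$ is $1$, not $0$; your case $\ell=0$ still works (take $I(w)=(V_w)$, a CI), but the stated justification that $R_1V_w=R_3$ would give Hilbert function $(1,3,3,0)$ rather than $T$.
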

\begin{proof} 
	
	For $\ell(V)=0$, then $A=R/(V)$ so specializing the algebra $A(w)$ is the same as specializing the corresponding net $V_w=I_2$. \par 
	For $\ell(V)=1$, $A=R/(I_2,\m^4)$ is simply obtained by quotienting $R/(I_2)$ of Hilbert function $(1,3,3,\overline{1})$ by $\m^4$. Hence, again, specializing such algebras $A(w)$ is the same as specializing the corresponding nets of conics.\par 
	Suppose that $\ell(V)=2$. We have $A(0)=R/(V,f,\m^4)$ with $f\notin (V)_3$, and let $A(w), w\in \mathcal W$ be the family of algebras over $V(w), w\not=0, V(w)\cong V'$ fixed.   The quotient vector space $R_3/R_1V_w\cong {\sf k}^2$, so $\{R_3/R_1V_w,w\in \mathcal W\}$ is an affine $\mathbb A^2$ bundle over $\mathcal W$. Given a non-zero $f\in R_3/R_1V_{w_o}$ there is a Zariski open subset $\mathcal U\subset \mathcal W$ parametrizing nets such that for $w\in \mathcal U$ the same $f\ne R_1V_w$. Thus $(V_w,f,\m^4)$ for $w\in \mathcal U$ is an ideal defining an Artinian algebra $A(w)$ of constant Hilbert function $T$, whose limit as $w\to w_0$ is $A=A(0)$. \par
	A similar proof holds for $\ell(V)=3$, where $f$ above is replaced by $(f,g)$ and $R_3/R_1V_w, w\in \mathcal W$ is an affine $\mathbb A^3$ bundle over $\mathcal W$. An alternative proof for $\ell(V)=3$ is in Theorem \ref{schl3thm}. \end{proof}

Constancy of the dimension $\ell(V)$ of the space $R_3/R_1V_ w, w\in \mathcal W$ is  key to this proof.\par
The following Example shows that when $\ell=2$ in Theorem~\ref{scheme2deformthm} the deformed algebras we find may not be complete intersections - or even Gorenstein: by Lemma \ref{nonemptyGorlem} $V^\prime $ of scheme length two cannot satisfy $V=I_2$ for a Gorenstein algebra $R/I$.
\begin{example}
For any constant $\alpha\in \sf k$, consider the family of ideals $I(w)=(y^2, z^2+wxz, xy, x^2z+\alpha x^3,x^4)$. For any $w\in \sf k$ the net of conics $I(w)_2=\langle y^2, z^2+wxz, xy\rangle$ has scheme length $\ell(I(w)_2)=2$. The algebras $A(w)=R/I(w)$ have Hilbert function $T=(1,3,3,1)$. We have $I(0)=(y^2, z^2, xy, x^2z+\alpha x^3,x^4)$ and $I(0)_2=\langle y^2, z^2, xy\rangle$, the net of conics \#5b of Figure \ref{1fig}. One can verify that for any $w\neq 0$, $I(w)_2=\langle y^2, z(z+wx), xy\rangle$ is isomorphic to the net of conics \#6b of Figure \ref{1fig}. Indeed, setting $x^{\prime}=y$, $y^{\prime}=wx$ and $z^{\prime}=z$ we get  $I(w)_2= \langle x^{\prime 2}, z^{\prime}(z^{\prime}+y^{\prime}), \frac{1}{w}x^{\prime}y^{\prime}\rangle$ that is the net of conics \#6b of Figure \ref{1fig}.
\end{example}
\begin{remark}\label{fieldrem}  These results are valid over an algebraically closed field $\sf k$ of characteristic zero or characteristic $p>3$ - as then the classification of nets in Figure \ref{1fig} is valid (\cite[Appendix~B, p. 80]{AEI} or \cite{Onda}). They are of course also valid over any field for which the classification of isomorphism classes of nets in Figure \ref{1fig} is valid, but in general the classification of nets over non-algebraically-closed fields is more complicated (see \cite[Appendix B]{AEI}).
\end{remark}
\subsection{The case $T=(1,3,3,1)$: $ G_T$ is in the closure of $CI_T$.}\label{3.4sec}
The deformation of algebras of Hilbert function $T=(1,3,3,1)$ to Gorenstein algebras are treated in 5 different cases, depending on the length of the algebra. For the cases scheme length $\ell(V)=0,1$, the conclusion is easy. For the other cases scheme length $\ell(V)=2,3$ or $\infty$ we give explicit deformations. The proof of the following main result (the first half of Theorem \ref{1thm}), is given below and in the following subsections.  

\begin{theorem}\label{k=2GTthm}  Let $T=(1,3,3,1)$. The family $G_T$ of graded quotients of $R={\sf k}[x,y,z]$ having Hilbert function $T$, is the Zariski closure of the family $CI_T$ of graded complete intersection quotients of $R$ having Hilbert function $T$; and $G_T$ is irreducible of dimension $9$.
\end{theorem}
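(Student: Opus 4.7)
My plan is to stratify $G_T$ by the scheme length $\ell=\ell(I_2)\in\{0,1,2,3,\infty\}$ (using Lemma~\ref{elimlem}) and show each stratum is contained in $\overline{CI_T}$. I embed $G_T$ as the closed incidence variety
\[
G_T=\{(V,W)\in\Grass(3,R_2)\times\Grass(9,R_3):R_1V\subset W\},
\]
with projection $\pi:G_T\to\Grass(3,R_2)$ sending $I\mapsto I_2$. The fiber over $V$ is $\Grass(H_3(R/(V))-1,R_3/R_1V)$, of dimension $0,0,1,2,3$ as $\ell(V)=0,1,2,3,\infty$ respectively. Since the CI locus is Zariski open and dense in the irreducible variety $\Grass(3,R_2)$ of dimension $9$, and its fiber over a CI net is a single point, $CI_T$ is an open, irreducible subset of $G_T$ of dimension $9$; thus the theorem reduces to proving $G_T\subset\overline{CI_T}$.

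The cases $\ell\in\{0,1\}$ are immediate. If $\ell=0$ then $A\in CI_T$. If $\ell=1$ then $I=(V,\m^4)$, and any family $V(w)$ of CI nets specializing to $V$ in $\Grass(3,R_2)$ gives $R/(V(w))\in CI_T$ whose flat limit (forced by the dimension counts $\dim R_1V=9$ and $T_4=0$) is $R/(V,\m^4)=A$. For $\ell\in\{2,3\}$ I would perturb the generators: fix a basis $v_1,v_2,v_3$ of $V$ and set $V(w)=\langle v_i+w\alpha_i\rangle$ with $\alpha_i\in R_2$. Each of the $H_3(R/(V))-1$ linear relations $\sum c_{kj}x_kv_j=0$ among the nine generators of $R_1V$ transforms, after perturbation, into $\sum c_{kj}x_k(v_j+w\alpha_j)=w\sum c_{kj}x_k\alpha_j\in R_1V(w)$, producing the new class $\sum c_{kj}x_k\alpha_j\bmod R_1V$ in the flat-limit fiber. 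A case-by-case computation on each orbit \#7a, \#6b, \#6c, \#5b, \#6a, \#5a, \#4 (plus the reduction of \#2b to \#4 via Theorem~\ref{schl3thm}(ii)) shows that as $(\alpha_1,\alpha_2,\alpha_3)$ ranges over $R_2^{\oplus 3}$, the resulting $(H_3-1)$-dimensional subspace of $R_3/R_1V$ has Zariski-dense image in $\Grass(H_3-1,R_3/R_1V)$, and a generic choice makes $V(w)$ a complete intersection; hence every $A$ in the fiber is a flat limit of $R/(V(w))\in CI_T$.

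The case $\ell=\infty$, $V=xR_1$ (\#2a), is the main obstacle, since $\dim R_1V=6$ and the fiber is $\Grass(3,R_3/xR_2)\cong\Grass(3,4)$ of dimension $3$, while $V$ sits deep in the boundary of the CI locus. Taking $V(w)=\langle x^2+wg_1,\,xy+wg_2,\,xz+wg_3\rangle$, the three Koszul syzygies $x\cdot xy-y\cdot x^2$, $x\cdot xz-z\cdot x^2$, $y\cdot xz-z\cdot xy$ produce the three first-order classes $xg_2-yg_1$, $xg_3-zg_1$, $yg_3-zg_2$ in $R_3/xR_2=\langle y^3,y^2z,yz^2,z^3\rangle$. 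I would check by direct linear algebra, writing the corresponding coefficient matrix in the basis $\{y^3,y^2z,yz^2,z^3\}$, that as $(g_1,g_2,g_3)\in R_2^{\oplus 3}$ varies the span of these three classes has Zariski-dense image in $\Grass(3,4)$, while the generic perturbation yields a CI $V(w)$ (verified by showing that the three perturbed conics have no common zero in $\mathbb{P}^2$ for generic $w\neq 0$). Combining the four cases shows $G_T\subset\overline{CI_T}$, whence $G_T=\overline{CI_T}$ is irreducible of dimension $9$.
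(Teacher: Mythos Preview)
Your overall strategy is sound and genuinely different from the paper's. The paper does not attempt a uniform first-order perturbation analysis: instead it uses Theorem~\ref{scheme2deformthm} (constant scheme-length specialization) to reduce the $\ell=2$ and $\ell=3$ cases to the single orbits \#7a and \#6a, then writes down explicit one-parameter families of complete intersections limiting to each algebra in those orbits (Sections~\ref{def2sec} and~\ref{defsch3sec}); for $\ell=\infty$ it first deforms \#2a to \#6a via the irreducibility of $\Hilb^3\mathbb P^2$ (Proposition~\ref{2aprop}) and then invokes the \#6a case. Your incidence-variety description of $G_T$ and the syzygy mechanism for producing first-order limit classes are more conceptual and treat all strata uniformly, at the price of a heavier case analysis that you defer.

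There is, however, a real logical gap in your final inference. From ``the span of the first-order classes has Zariski-dense image in $\Grass(H_3-1,R_3/R_1V)$'' together with ``a generic choice of $(\alpha_i)$ makes $V(w)$ a complete intersection'' you conclude ``hence every $A$ in the fiber is a flat limit of $R/(V(w))\in CI_T$.'' This does not follow as stated: density plus a generic CI condition only shows that a \emph{dense} subset of the fiber $\pi^{-1}(V)$ consists of such limits. To reach \emph{every} $A$ you must add the closure step: each flat limit lies in the closed set $\overline{CI_T}\cap\pi^{-1}(V)$, and a closed subset of $\pi^{-1}(V)$ containing a dense subset equals $\pi^{-1}(V)$. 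You should also note explicitly that on the open locus of $(\alpha_i)$ where the first-order classes are linearly independent \emph{and} $V(w)$ is generically CI, the first-order span really is the flat limit of $R_1V(w)$ (so that the map to the Grassmannian is defined there); outside that locus higher-order terms are needed, but you do not use those $(\alpha_i)$.

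Finally, the ``case-by-case computation'' you invoke is not routine linear algebra once $\ell\ge 3$. For $\ell=2$ there is a single syzygy and one checks surjectivity of a linear map $R_2^{\oplus 3}\to R_3/R_1V$, which is straightforward. For $\ell=3$ and for \#2a you must show that a rational map from $R_2^{\oplus 3}$ to a positive-dimensional Grassmannian is dominant; for \#2a your three first-order classes reduce modulo $xR_2$ to $y\bar g_1,\ z\bar g_1,\ y\bar g_3-z\bar g_2$, so two of them always lie in the principal ideal $(\bar g_1)\subset {\sf k}[y,z]$, and hitting a prescribed $W=\Ann(F)_3$ forces $\bar g_1\in\Ann(F)_2$. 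This works (since $\Ann(F)_2\neq 0$ for every nonzero cubic $F$), but it is exactly the kind of verification you need to write out rather than assert.
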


\begin{proof}[Proof outline]
	We have $I_2=V$ where $\ell(V)\in\{0,1,2,3,\infty\}$. Scheme length $\ell(V)=0$ implies that $R/(V)$ is a complete intersection (CI), of Hilbert function $H(R/(V))=(1,3,3,1,0)$, hence that the algebra $A$ is a CI and there is nothing to show. These algebras are classified up to isomorphism in \cite[Table 1]{AEI}. These are our Figure~\ref{1fig} cases \#8b (continuous family of isomorphisms, parametrized by a function of $\lambda$),\#8c,\#7c, and \#6d. \par
	We suppose next that the scheme length $\ell(V)=1$. These are the cases \#8a or \#7b of Figure \ref{1fig}. We have that the nets of \#8a are in the closure of \#8b since the latter parametrize a dense open in $\Grass(R_2,3)$ that defines CI algebras $A=R/(V);$ thus schemes with $I_2\in $ \#8a are in the closure of $CI_T$. Hence the algebras $A$ with $I_2$ a net of \#7b are also in the closure of $CI_T$ by Theorem \ref{scheme2deformthm}.\par
	For the cases $\ell(V)=2$ or $3$, again by Theorem \ref{scheme2deformthm} it is enough to prove that the classes \#7a and \#6a are in the closure of $CI_T$. We show this by explicit deformations of $A$ to a complete intersection (Sections~\ref{def2sec} and \ref{defsch3sec}). The case $\ell(V)=3$ is also shown in Theorem
	\ref{schl3thm}.
	\par 
	When $\ell(V), V=I_2$ is infinite, the scheme $\Z(V)$ (Proposition~\ref{Gotzmannprop}) may be chosen to be the line $x=0$. This corresponds to the case \#2a which is the special case of Section~\ref{2aasec}, Proposition~\ref{2aprop} for $k=2$. 	\par
	Finally, since $CI_T$ is irreducible, so is $G_T$.
	  
\end{proof}

\subsection{Explicit deformations to AG algebras, $T=(1,3,3,1)$.}

By Theorem \ref{scheme2deformthm} the only explicit deformations needed to prove our main result that algebras $A\in G_T$ are in the closure of $\Gor(T)$, when the scheme length $\ell(V), V=I_2$ is two or three, are those for the cases \#7a and \#6a. We nevertheless give here explicit deformations to Gorenstein algebras for all $A\in G_T$ such that the net $V=I_2$ has scheme length 2 or 3. We explain the cases \#7a and \#6a in detail. 

\subsubsection{Scheme length of $V=I_2$ is two, $T=(1,3,3,1)$.}\label{def2sec} We consider first the nets $V=I_2$ of scheme length is equal to 2, i.e. the cases \#7a, \#6b, \#6c and \#5c. Any algebra $A=R/I$ in $G_T$ having $V=I_2$ satisfies 
\begin{equation}\label{feq}
I=(V,f,\mathrm{m}^4) \text{ where }f\in R_3\quad f\notin (V).
\end{equation}
 We give below the deformations needed on $I$ so that the deformed algebra $A(t), t\ne 0$ is a complete intersection (CI).
\begin{enumerate}
	\item \textbf{Case \#7a.} Let the Artinian algebra $A=R/I$ of Hilbert function $H(A)=(1,3,3,1)$ satisfy $I_2=V=(zy+x^2,zx, yx)$, \#7a in Figure \ref{1fig}. Here $V$ is symmetric in $y,z$ and $R_3/R_1V\cong \langle z^3,y^3\rangle$. We take $f=z^3+ay^3$ with $a\ne 0$. \par
	(When $f=y^3$ or $z^3$ we argue that the algebra $A$ deforms first to a case where $f=z^3+ay^3, a\ne 0$.) \par
	{\it Claim}: $A$ is in the closure of the complete intersections of the form $A=R/I(t))$ such that $V(t)=I(t)_2$ is in \#8b for $t\ne 0$.\par
	\begin{proof}[Proof of Claim] (i).
		Consider $f_1=zy+x^2,$ $f_2=zx+t_1y^2,$ $f_3=t_2z^2+xy$, and let $I(t)=(f_1,f_2,f_3,f)$, where $t$ denotes $(t_1,t_2)$. \par
		We assume henceforth that 
		\begin{equation}\label{deformeq} \text{ $at_1t_2\neq 0$ and $t_1t_2\neq -1$}. 
		\end{equation}
		Then $x^4\in I(t)$, since for $at_1t_2\neq 0$ and $t_1t_2\neq -1$, 
		$$(t_1t_2+1)x^4=((t_1t_2+1)^2-t_1t_2yz)f_1+t_2z^2f_2-xzf_3.$$
		{Also,} since $(1+t_1t_2)x^2y=xf_3-t_2zf_2+t_1t_2yf_1$, then $$R_1x^2y=(x^3y,x^2y^2,x^2yz)\subset I(t).$$ Similarly, since $(1+t_1t_2)xy^2=yf_3+t_2xf_2-t_2zf      _1$, then $$R_1xy^2=(x^2y^2,xy^3,xy^2x)\subset I(t);$$
		and since $(1+t_1t_2)yz^2=t_1yf_3-xf_2+zf_1$, then $$R_1yz^2=(xyz^2,y^2z^2, yz^3)\subset I(t)\quad \text{for $t\not=0$}.$$ Also,\par
	$x^3z=x^2f_2-t_1x^2y^2, \, t_2xz^3=xzf_3-x^2yz, \,y^3z=y^2f_1-x^2y^2\\
 z^4=zf-ay^3z,\, x^2z^2=xzf_2-t_1xy^2z,$
 and $ay^4=fy-z^3y$.\par Hence, for $a\ne 0$ and $t$ satisfying Equation \eqref{deformeq} (that is, $t_1t_2\ne 0,-1$) the ideal $I(t)$ contains $\m^4$ and $HF(R/I(t))=(1,3,3,1)$.\vskip 0.2cm\par
	(ii.)	We now want to find a relation between $t_1$ and $t_2$ so that $I(t)=(f_1,f_2,f_3)$. A calculation by Macaulay2 shows that $$t_1t_2f-t_1zf_3+(t_1+at_2)xf_1-at_2yf_2=(t_1+at_2)x^3$$
		Therefore, $f\in (f_1,f_2,f_3)$ if and only if $t_1=-at_2
		\ne 0$. Write $t_2=t$ and $t_1=-at$ and then we have that
		\begin{equation}\label{7aCIeq}
			I(t)=(zy+x^2,zx-aty^2, tz^2+xy)
		\end{equation} is a complete intersection, as desired, provided
		$t_1=-at,t_2=t\not=0$, and $t_1t_2=-at^2\not=-1$. This is  Figure \ref{1fig}, entry \#8b.
	\end{proof}
	\item \textbf{Case \#6b.} Let $V=(x^2,xy,z(z+y))$. Since $R_3/R_1V\cong\langle y^2z,y^3\rangle$ we take $f=zy^2+ay^3$ in Equation \eqref{feq}; and consider the deformed algebra $A(t)=R(t)/I(t)$ where $$I(t)=(z^2+yz,at^2yz+x^2,aty^2+xy).$$
	Then $A(t)$ is a CI, provided that $at(at-1)\neq 0$. 
	\item \textbf{Case \#6c.} $V=(z^2+x^2,yz,xz)$ and $f=y^3+axy^2$. The deformation $A(t)=R(t)/I(t)$ with $$I(t)=((1+at)x^2+t(1+at)xy+t^2y^2+z^2,yz+tx^2,xz),$$
	is a CI if $at(1+at)\neq 0 $. 
	\item \textbf{Case \#5b.} $V=(z^2,y^2,xy)$ and $f=x^2z+ax^3$. Then, with $at\neq 0$ we have that $A(t)=R(t)/I(t)$ is a CI for $$I(t)=(z^2,y^2+at^2xz,xy+atx^2,x^2z+ax^3).$$

\end{enumerate}
\subsubsection{Scheme length of $V=I_2$ is three, $T=(1,3,3,1)$.}\label{defsch3sec}
The cases $\#6a$, $\#5a$, $\#4$ and $\#2b$ of Figure \ref{1fig} correspond to nets $V$ having scheme length 3. An algebra $A=R/I$ in $G_T$ having $V=I_2$, with $\ell(V)=3$ satisfies 
\begin{equation}\label{fgeq}
I=(V,f,g,\m^4) \text{ where $f,g\in R_3$ and $\langle f,g\rangle \cap R_1V=0$}.
\end{equation}
Here  $\langle f,g\rangle $ denotes the vector space span of $f,g$.
\begin{enumerate}
	\item \textbf{Case \#6a,} Let $V=(xy,yz,xz)$ of scheme length 3. The ideal $I\in G_{\{V\}}(T)$ with $T=(1,3,3,1)$ is up to isomorphism of the form \eqref{fgeq} where $f=z^3+ax^3$ and $g=y^3+bx^3$, with $ab\neq 0$. Consider the following deformation, for {\bf t}$=(t_1,t_2,t_3)$,
	
	\quad $I(\textbf{t})=(f_1,f_2,f_3,f,g)$ where $f_1=yz+t_1x^2$, $f_2=xz+t_2y^2$, $f_3=xy+t_3z^2$.  A computation by Macaulay2 shows that the Hilbert function of $A/I(\textbf{t})$ is $(1,3,3,1)$ with the conditions $abt_1t_2t_3\neq 0$ and $1+t_1t_2t_3\ne 0$. We want to find a relation between the parameters so that $f$ and $g$ are in $(f_1,f_2,f_3)$. Using Macaulay2 again we get $$(t_1+at_3)x^3=xf_1-zf_3+t_3f$$ and $$(bt_2-at_3)x^3=-yf_2+zf_3-t_3f+t_2g.$$
	Assuming furthermore that $t_1+at_3=0$ and $bt_2-at_3=0$, and with a change of variable $t_1=-abt$, $t_2=at$ and $t_3=bt$, we set $$I(t)=(yz-abtx^2,xz+aty^2,xy+btz^2)\supset(f,g)$$ with $abt\neq 0$ and $1-a^2b^2t^3\ne 0$. Then $A(t)=R(t)/I(t)$ is a subfamily of CI's in the isomorphism classes \#8b.
	\item \textbf{Case \#5a.} $V=(z^2,xz,xy)$. We can take - possibly after an initial deformation - $f=yz^2+ax^3$ and $g=yz^2+by^3$. The deformed algebra $A(t)=R/I(t)$ where for $abt\neq 0$, $$I(t)=(z^2-a^2tx^2,xz+aty^2-btyz,xy),$$
	defining a complete intersection  $A(t)=R(t)/I(t)$.
	\item \textbf{Case \#4.} $V=(x^2+yz,xy,y^2),$ $f=xz^2+az^3$ and $g=yz^2+bz^3$. The desired deformation is, for {\bf t}$=(t_1,t_2,t_3)$, $$I((\textbf{t})=(x^2+yz+t_1z^2, xy+t_2z^2,t_3xz+y^2+t_4z^2)$$ where $t_1=at$, $t_2=\frac{b}{a^2-b}t(b-at)$, $t_3=bt$ and $t_4=\frac{ab}{a^2-b}t(b-at)$ for $atb(a^2-b)(b-at)\ne 0$ and $t_2+ab\ne 0$.  Then $A(t)=R(t)/I(t)$ is a CI.
	\item \textbf{Case \#2b.} After a change of variables and an initial deformation we have $V=(z^2,yz,y^2)$, $f=zx^2+ax^3$ and $g=yx^2+bx^3$. We consider the following deformation 
	$$I(t)=(z^2+(at^2-bt)zy+atzx+a^2t^2x^2,yz,btxz+y^2)$$
	which defines a complete intersection  $A(t)=R(t)/I(t)$ when $abt\ne 0$.
	\end{enumerate}
\subsection{The case  $T=(1,3^k,1), k\ge 3$: $G_T$ is in the closure of $ {\Gor_{\#6a}(T)}$.}\label{3.6sec}
We begin with a simple observation.
\begin{remark}\label{kge3rem}
Since $H(R/(V))_i=\ell(V)$ for $i\ge 3$, the only $V=I_2$ that are possible for a graded algebra $A$ of Hilbert function $T=(1,3^k,1), k\ge 3$ are\par\noindent
(i). those of scheme length $\ell(V)=3$: $\#6a$, $\#5a$, $\#4$, $\#2b$; OR\par\noindent
(ii). $V$ from \#2a from Figure \ref{1fig} where the scheme $\Z(V)$ is a line. 
\end{remark} 
\begin{theorem}\label{kge3thm} \phantom{AlahambraArdvarksAcclaimAstoundingAxiom} Let $T=(1,3^k,1)$ with $k\ge 3$. Then $G_T$ is in the closure of $\Gor_{\#6a}(T)$, the Artinian Gorenstein algebras $A=R/I$ where $I_2\cong (xy,xz,yz)$, or, in other words those for which $I=\Ann F, F\cong X^j+\alpha_1Y^j+\alpha_2Z^j$; and $G_T$ is irreducible of dimension eight. \par Assume further that $\sf k$ is closed under taking $j$-th roots. Then  $G_T$ is in the closure of the family $A_{\mathcal L}=R/I_{\mathcal L}$ where $I_{\mathcal L}=\Ann\, F$ for $ F=L_1^j+L_2^j+L_3^j$ with ${\mathcal L}=\{L_1,L_2,L_3\}$ linearly independent in $S_1$.

\end{theorem}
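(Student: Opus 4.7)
The plan is to combine Remark~\ref{kge3rem}, Theorem~\ref{schl3thm}, and Theorem~\ref{2thm} to handle the scheme-length-three nets, and then invoke the explicit deformation of Proposition~\ref{2aprop} for the remaining class \#2a, which is where the main obstacle lies.

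By Remark~\ref{kge3rem}, every $A = R/I \in G_T$ has $V = I_2$ either isomorphic (under $\Pgl(3)$) to one of the scheme-length-three classes \#6a, \#5a, \#4, \#2b of Figure~\ref{1fig}, or to \#2a. For the scheme-length-three cases, Theorem~\ref{schl3thm}(i) shows $A \in \overline{\Gor_V(T)}$ whenever $V$ is \#6a, \#5a, or \#4; if $V$ is in \#2b, where $\Gor_V(T) = \emptyset$ by Lemma~\ref{nonemptyGorlem}, Theorem~\ref{schl3thm}(ii) places $A$ in $\overline{\Gor_W(T)}$ with $W$ in \#4 instead. Since the arrows of Figure~\ref{1fig} show that \#5a and \#4 lie in the $\Pgl(3)$-closure of \#6a within the scheme-length-three stratum, Theorem~\ref{2thm} applies to give $\Gor_{\#5a}(T),\, \Gor_{\#4}(T) \subset \overline{\Gor_{\#6a}(T)}$, and chaining these inclusions puts $A \in \overline{\Gor_{\#6a}(T)}$ in every scheme-length-three case.

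The case $V \cong$ \#2a is the main obstacle, because the scheme length jumps from three to infinity and Theorem~\ref{2thm} no longer applies. Here the plan is to write $I$ in the normal form of Theorem~\ref{5thm}(ii), with degree-two part $xR_1$ and additional generators in degree three and near the socle, and then construct a flat one-parameter family $A(t) = R/I(t)$ with $I(0) = I$ such that $I(t)_2$ becomes isomorphic to $\langle xy, xz, yz\rangle$ for $t \ne 0$, while simultaneously adjusting the remaining generators polynomially in $t$ so that $H(A(t)) = T$ for all $t$ in a neighborhood of $0$. This is exactly Proposition~\ref{2aprop} (Section~\ref{2aasec}), which I would invoke to finish this case.

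Combining the two cases yields $G_T \subset \overline{\Gor_{\#6a}(T)}$, and irreducibility of $G_T$ then follows from Lemma~\ref{irredlem}. The dimension count is $\dim G_T = \dim G_{\{V\}}(T) = d(V) + 2 = 6 + 2 = 8$ by Theorem~\ref{4thm}(ii) with $V$ in \#6a (where $d(V) = 6$ is the dimension of the $\Pgl(3)$-orbit of $\langle xy, xz, yz\rangle$). Finally, when $\sf k$ admits $j$-th roots, the rescaling $Y \mapsto \alpha_1^{1/j} Y$, $Z \mapsto \alpha_2^{1/j} Z$ transforms $F = X^j + \alpha_1 Y^j + \alpha_2 Z^j$ (with $\alpha_1\alpha_2 \ne 0$) into $X^j + Y^j + Z^j = L_1^j + L_2^j + L_3^j$ for a linearly independent triple $\mathcal L$; together with the $\Pgl(3)$-action on $(X, Y, Z)$ this identifies the family $\{A_{\mathcal L}\}$ with $\Gor_{\#6a}(T)$, and the final statement follows from the first claim.
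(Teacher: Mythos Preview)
Your proof is correct and uses the same core ingredients as the paper: Remark~\ref{kge3rem} to restrict $V=I_2$ to scheme length three or \#2a, Theorem~\ref{schl3thm} to place the scheme-length-three cases in the closure of the appropriate $\Gor_V(T)$, and Proposition~\ref{2aprop} to handle \#2a by first deforming into $G_{\#6a}(T)$.

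There is one structural difference worth noting. The paper's own proof is a single sentence invoking, in addition to the three ingredients above, \emph{Diesel's irreducibility of $\Gor(T)$ in codimension three} \cite{Di}: once you know $G_T\subset\overline{\Gor(T)}$ and $\Gor(T)$ is irreducible, the fact that $\Gor_{\#6a}(T)$ has maximal dimension among the Gorenstein strata immediately makes it dense in $\Gor(T)$, so $\overline{\Gor_{\#6a}(T)}=\overline{\Gor(T)}\supset G_T$. You instead bypass Diesel's external result by using Theorem~\ref{2thm} to chain $\Gor_{\#5a}(T)$ and $\Gor_{\#4}(T)$ directly into $\overline{\Gor_{\#6a}(T)}$, and then obtain irreducibility from Lemma~\ref{irredlem} applied to $\Gor_{\#6a}(T)$. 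Your route is more self-contained (everything is internal to the paper), while the paper's is shorter but leans on \cite{Di}. One small caution: Theorem~\ref{2thm} as stated only lands you in $\overline{G_{\#6a}(T)}$, not $\overline{\Gor_{\#6a}(T)}$; you then need either the chaining step through Theorem~\ref{schl3thm}(i) (which you do indicate) or the observation that Gorenstein is an open condition, so a Gorenstein limit forces the nearby fibers to be Gorenstein as well.
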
\noindent
This is the second part of Theorem \ref{1thm}. It follows from Remark~\ref{kge3rem}, Theorem~\ref{schl3thm} above, Proposition \ref{2aprop} below and the irreducibility of 
$\Gor(T)$ for $T$ a Gorenstein sequence of codimension three \cite{Di}.

\subsubsection{Scheme $\Z_V$ a line, $\ell(V)=\infty$.}\label{2aasec}
 The case $V$ in \#2a is the only case where the length of the scheme is infinite, with Hilbert function $(1,3,3,4,5,\dots)$: the scheme $\Z_V$ is the line $x=0$. We next show that the algebra $A=R/I$, with $V=I_2$ in \#2a, deforms to an algebra $A(w)$ where $V(w), w\ne w_o$ is in \#6a. Then Theorem \ref{schl3thm} provides a deformation to a Gorenstein algebra.  Note that the socle degree $j_A=k+1$.\par Complementary to the following Proposition is Corollary \ref{closurecor} (ii), which states that  $G_{\{V\}}(T),$ $ T=(1,3^k,1), k\ge 3$ is not in the closure of any $G_{\{V^\prime\}}(T)$ for which $d(V^\prime)\le 5$.
\begin{proposition}\label{2aprop} Let $T= (1,3^k,1), k\ge 2$. Suppose $A=R/I$ where $V=I_2=\langle x^2,xy,xz\rangle$ (the class \#2a) and $H(A)=T$. Then $A=A(w_o)$ is a specialization of a family  $A(w), w\in W$ where $A(w)=R(w)/I(w), w\ne w_o$ satisfies $V(w)=I(w)_2$ is in class \#6a, for $w\ne w_o$, and $H(A(w))=T$.\par
\end{proposition}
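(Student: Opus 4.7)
The plan is to construct an explicit flat one-parameter family $A(w) = R/I(w)$, with $w$ in a curve $\mathcal W$ and basepoint $w_0 = 0$, specializing at $w_0$ to the given $A$, such that $V(w) = I(w)_2$ lies in the orbit \#6a for $w \ne w_0$ and the Hilbert function is constantly $T$. Geometrically, $V(w)$ for $w \ne w_0$ will be the degree-two part of the homogeneous ideal of three points of $\mathbb P^2$ in general linear position, and these three points will be allowed to collapse onto the line $\mathfrak Z_V = \{x = 0\}$ as $w \to w_0$.

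I first deform the net. Since $I_3 \supsetneq (V)_3$, pick a cubic $f \in I_3 \setminus (V)_3$; its reduction $f_0 \in {\sf k}[y,z]_3$ modulo $x$ is a nonzero binary cubic whose length-three zero scheme on the line $\{x = 0\}$ lies on the cubic curve $\{f = 0\}$, because $f|_{x=0} = f_0$. Choose three $w$-dependent points $P_i(w) = [w : b_i(w) : c_i(w)] \in \{f = 0\}$, $i = 1,2,3$, converging as $w \to 0$ to the zero scheme of $f_0$ (with collisions if $f_0$ has repeated roots, so as to match the prescribed subscheme structure of the line). Let $V(w) \subset R_2$ be the space of conics vanishing on the $P_i(w)$. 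For $w \ne 0$ the three points are in general linear position and $V(w) \cong \langle xy, xz, yz\rangle$ is in \#6a; at $w = 0$ the three limit points lie on $\{x=0\}$ and any conic through them must contain the whole line, forcing $V(0) = xR_1$, i.e., \#2a. By the choice $P_i(w) \in \{f = 0\}$, we have $f \in (V(w))_3$ for all $w$, so no extra degree-three generator is needed for $w \ne 0$.

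Next I deform the socle-degree data. At degree $j = k+1$, the Macaulay dual space $((V(w))_j)^\perp \subset S_j$ is three-dimensional for $w \ne 0$, spanned by the $j$-th powers $L_1(w)^j, L_2(w)^j, L_3(w)^j$ of the linear forms $L_i(w) \in S_1$ dual to $P_i(w)$; at $w = 0$ the $L_i(0)$ are binary linear forms in $Y, Z$, so the limit span sits inside ${\sf k}\{Y,Z\}_j = ((V(0))_j)^\perp$. Pick a one-parameter family $F(w) = \alpha_1(w) L_1(w)^j + \alpha_2(w) L_2(w)^j + \alpha_3(w) L_3(w)^j$ with $F(0)$ equal to the prescribed generator of $M_j = (I_j)^\perp \subset {\sf k}\{Y,Z\}_j$ of the algebra $A$, and set $I(w)_j = \{G \in R_j : G \circ F(w) = 0\}$, a codimension-one subspace of $R_j$ containing $(V(w))_j$. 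Define $I(w) = (V(w), f) + I(w)_j + \mathfrak m^{j+1}$. For $w \ne 0$, this equals $\Ann F(w)$, a Gorenstein algebra of Hilbert function $T$ by the standard dimension count in a \#6a family, with $V(w) \cong$ \#6a. At $w = 0$, the dimension identities $\dim(xR_{i-1} + R_{i-3}f) = \binom{i+2}{2} - 3$ for $3 \le i \le j-1$, together with $I(0)_j$ being precisely $\langle F(0)\rangle^\perp = I_j$, recover $I(0) = I$. Constancy of $T$ along the family then follows from upper semi-continuity (Lemma~\ref{openlem}).

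The main obstacle is realizing an arbitrary one-dimensional $M_j \subset {\sf k}\{Y,Z\}_j$ as the limit of lines in the three-dimensional spans $\langle L_1(w)^j, L_2(w)^j, L_3(w)^j\rangle$. This is a Waring-type problem for binary forms of degree $j$. For $j \le 4$, three generic powers of binary linear forms already span the entire $(j+1)$-dimensional space ${\sf k}\{Y,Z\}_j$, so any $F(0)$ is reached. For $j \ge 5$ one must exploit collisions among the $P_i(w)$: when two or three of them approach a common limit point along a prescribed first- or second-order direction, an appropriate rescaling of the coefficients $\alpha_i(w)$ yields in the limit not only pure powers but also their first and second derivatives, which together realize any binary form of degree $j$ as a border-rank-three limit. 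The three admissible collision patterns correspond exactly to the normal forms $f_0 = yz(y-z),\ yz^2,\ z^3$ in the classification of Theorem~\ref{2aAthm} (three distinct, one double plus one simple, and one triple root), covering every isomorphism class of $A$ in $G_{\{V\}}(T)$.
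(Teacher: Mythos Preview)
Your overall strategy --- deform the length-three scheme $\mathfrak{Z}(w_0) = \Proj R/(V,f)$ on the line $\{x=0\}$ to three non-collinear points, so that $V(w)$ lands in \#6a --- agrees with the paper's. The paper, however, finishes much more simply: it keeps the degree-$j$ generators $g, h \in R_j$ of $I$ \emph{fixed} and sets $I(w) = (V(w), g, h, \m^{j+1})$ for $w \ne w_0$, invoking only that $\langle g, h\rangle \cap (V(w))_j = 0$ is an open condition on $w$ which holds at $w_0$, where $(V(w))_j$ limits to $(V,f)_j$. The existence of the deformation of schemes is obtained from the density of the smooth locus in $\Hilb^3\mathbb P^2$, and no deformation of a dual generator is needed.

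Your detour through a moving $F(w) \in ((V(w))_j)^\perp$ is not wrong in outcome, but the ``main obstacle'' you identify is illusory. Because the family $\mathfrak{Z}(w)$ is flat in $\Hilb^3\mathbb P^2$ with limit $\mathfrak{Z}(w_0)$, the three-dimensional apolar spaces $((V(w))_j)^\perp$ converge in $\Grass(S_j,3)$ to $((V,f)_j)^\perp$; since the prescribed $F$ generating $(I_j)^\perp$ already lies in $((V,f)_j)^\perp$, a section $F(w)$ with the required limit exists automatically, and your collision/border-rank discussion merely recomputes this flat limit by hand, case by case in $f_0$. Note also that your claim ``for $j \le 4$, three generic $j$-th powers of binary linear forms span all of ${\sf k}_{DP}[Y,Z]_j$'' is false once $j \ge 3$: three $j$-th powers span at most a $3$-dimensional subspace, never the whole $(j{+}1)$-dimensional space. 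This error is harmless here only because the correct target is the $3$-dimensional $((V,f)_j)^\perp$, not all of ${\sf k}_{DP}[Y,Z]_j$.
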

\begin{proof}
 Since $H(R/I_2)=(1,3,3,4,5,\ldots )$, in order to achieve the Hilbert function $H(A)=T$  we need that $I$ has a generator $f\in {\sf k}[y,z]_3$, and further generators $g,h\in {\sf k}[y,z]_{j}$. The ideal $(V,f)$ satisfies $H(R/(V,f))=(1,3,\overline{3})$ (if only because it is essentially generated  in ${\sf k}[y,z]$ by a degree three form), so $(V,f) $ defines a scheme $\Z(w_0)$ of length 3 concentrated on the line $x=0$ of $\mathbb P^2=\Proj R$. 
We have $ I=(V,f,g,h, \m^{j+1} ) $  (we might need a generator for $I$ in degree $j+1$).
Recall that in $\Hilb^n \mathbb P^2$ there is an open dense subfamily parametrizing smooth schemes \cite{Fo,BriGal}. It follows that
there is a parameter variety $W\subset \
\Hilb^3(\mathbb P^2)$ (we can take $W$ to be a curve) such that
the scheme $\Z=\Z(w_0) $ defined by $(V,f)$ deforms to a scheme $\Z(w)\subset \mathbb P^2$, defined by the ideal $(V(w))\subset R$, such that for $w\ne w_0$ the scheme $\Z(w)$ is smooth, comprised of three non-collinear points. Thus, for $w\ne w_0$, we have that $V(w)=I(w)_2$ defines $\Z(w)$, and lies in the $\Pgl(3)$ equivalence class of \#6a (here \#6a: $V=(xy,xz,yz)$ corresponds to the scheme defined by the three points $(1,0,0), (0,1,0)$ and $(0,0,1)$).  Asking that the ideal $(I(w)_2)$ have intersection $0$ with $ \langle g,h\rangle $ is an open condition on $w$, satisfied for $w=w_0$ by $(V,f)$.  \par
Thus, we may deform $A=A(w_0)$ to the algebra $A(w)=R/I(w)$ where $I(w)=(V(w), g,h,\m^4)\, H(A(w))=T$ and $I(w)_2=V(w)$ is in the isomorphism class \#6a of nets of conics  for $w\ne w_0$. 
 \begin{remark}[Counterexample]\label{nondeformrem} We will show that for $V=(xy,xz,x^2)$ (\# 2a) the family $G_{\{V\}}(T)$ of graded algebras $A=R/I$ of Hilbert function $T=(1,3^k,1), k\ge 3$ having $I_2\cong V$, form an irreducible variety of dimension seven, of known structure (Theorem \ref{dimthm}(iii)). We will show also that the analogous family of quotients $A=R/I$ for \#4,\#5b or \#6d, has dimension 6, and for \#5a,\#6b,\#6c dimension 7 (Theorem \ref{dimthm}(i),(ii)). It follows that when $\ell(V)\not=\ell(V^\prime)$ and $V$ is a limit of nets in the class of $V^\prime$, it may occur that not all algebras in $G_{\{V\}}(T)$ are in the closure of $G_{\{V^\prime\}}(T)$.
 \par
Although we cannot make an analogous counterexample involving only $V^\prime, V$ of finite lengths for $T=(1,3^k,1), k\ge 3$, as the only occurring finite length is three, we can readily make an analogous example for $k=2$, relying on Theorem \ref{dimthm} and Corollary \ref{closurecor} (see Remark \ref{closurerem}). Namely, choose any pair $(V^\prime, V)$ from Figure \ref{1fig}, such that $V^\prime$ specializes to $V$, with dimension $$d(V^\prime)=d(V)+1 \text { and } \ell(V)\ge \min\{2, \ell(V^\prime)+1\};$$ then $G_{\{V\}}(T)\nsubseteq \overline{G_{\{V^\prime\}}(T)}$, for dimension reasons. Examples of such pairs are (\#7a,\#6a), (\#6b,\#5a), and  (\#5b,\#4). 

 \end{remark}
\end{proof}
\subsection{Graded algebras $G_T$ in codimension three not in the closure of $\Gor(T)$.}\label{nclosesec}
We have proven in Theorem \ref{1thm} that for  $T=(1,3^k,1), k\ge 1$ the Zariski closure $\overline{\Gor(T)}=G_T$. To show that this result is sharp we determine the smallest-length codimension three Gorenstein sequences $T=(1,3,5,3,1)$, $ (1,3,6,3,1)$ and $(1,3,6,6,3,1)$ such that $G_T$ has two irreducible components, $\Gor(T)$ and another component $U_T$ that we will describe.\vskip 0.2cm\par\noindent
{\it Notation and preliminary facts.}
Below we set $R={\sf k}[x,y,z]$ and $S={\sf k}_{DP}[X,Y,Z]$, over an infinite field $\sf k$. Recall that $R$ acts on $S$ by contraction (Equation \eqref{contracteq}). 
Recall
\begin{lemma}\label{codlem}\cite[Prop. p.67 \S 4.2]{ACGH}. The rank $r$ locus constructed from a generic $m\times n$ matrix $M_{gen}$ satisfies 
\begin{equation}\label{codrankeq}
\text{codimension of rank $r$ locus}=(m-r)(n-r).
\end{equation}
This is also the maximum codimension for the rank $r$ locus of any $m\times n$ matrix $M$.
\end{lemma}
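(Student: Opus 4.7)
My plan is to prove both statements of Lemma \ref{codlem} via the classical incidence variety construction (due to Kempf), which simultaneously yields the dimension of the generic determinantal variety and, by functoriality, the upper bound on the codimension for arbitrary matrices.

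\textbf{Step 1: The generic case via an incidence variety.} Let $M = \mathrm{Mat}_{m\times n}(\mathsf{k}) \cong \mathsf{k}^{mn}$ and let $M_{\le r} \subset M$ be the locus of matrices of rank at most $r$. Consider the incidence variety
\begin{equation*}
\widetilde M_{\le r} = \{(A,W) \in M \times \mathrm{Grass}(n-r,n) \mid A(W)=0\}.
\end{equation*}
Projection to the Grassmannian exhibits $\widetilde M_{\le r}$ as a vector bundle whose fiber over $W$ is $\Hom(\mathsf{k}^n/W,\mathsf{k}^m)$, of dimension $mr$. Since $\dim \mathrm{Grass}(n-r,n) = r(n-r)$, we have $\dim \widetilde M_{\le r} = r(n-r) + mr$.

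\textbf{Step 2: Birationality and dimension count.} The first projection sends $\widetilde M_{\le r}$ onto $M_{\le r}$. Over the dense open locus of matrices of rank exactly $r$, the fiber is a single point, namely $(A,\ker A)$, so this projection is birational onto its image $M_{\le r}$. Therefore
\begin{equation*}
\dim M_{\le r} = r(n-r) + mr, \qquad \codk M_{\le r} = mn - r(n-r) - mr = (m-r)(n-r),
\end{equation*}
establishing the equality for the generic matrix.

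\textbf{Step 3: The upper bound for arbitrary $M$.} Now suppose $M$ is an $m\times n$ matrix with entries in a polynomial ring $\mathsf{k}[z_1,\dots,z_N]$ (so $M$ defines a morphism $\varphi \colon \mathbb A^N \to \mathrm{Mat}_{m\times n}$), and let $X_r \subset \mathbb A^N$ be the rank $\le r$ locus, i.e.\ the vanishing of the $(r+1)\times(r+1)$ minors. The key claim is that $\codk X_r \le (m-r)(n-r)$. The simplest justification is the classical theorem on heights of determinantal ideals (Eagon--Northcott, or \cite[\S II.4]{ACGH}): for any Noetherian ring $R$ and any $m\times n$ matrix with entries in $R$, the ideal $I_{r+1}(M)$ of $(r+1)\times(r+1)$ minors satisfies $\mathrm{ht}\, I_{r+1}(M) \le (m-r)(n-r)$. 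Pulling back via $\varphi$ gives the inequality; equivalently, because $X_r = \varphi^{-1}(M_{\le r})$ and $M_{\le r}$ has codimension exactly $(m-r)(n-r)$ inside $M$ by Step 2, no irreducible component of the preimage can have larger codimension than the target stratum.

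\textbf{Anticipated obstacle.} The only nontrivial point is the upper-bound half: Step 2 is a clean dimension calculation, but Step 3 rests on the Eagon--Northcott bound on heights of determinantal ideals, which does not follow from dimension theory alone (it requires the existence of a resolution or an explicit generic perfection argument). If one wishes to avoid citing this, an alternative is to observe that any family of matrices whose image meets $M_{\le r}$ in excess codimension would have to avoid the smooth locus of $M_{\le r}$ entirely, which can be ruled out by a local calculation in formal coordinates around a rank-$r$ point; but this refinement is not needed here since the paper cites \cite{ACGH} directly.
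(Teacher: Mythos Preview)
Your proof is correct and follows the standard incidence-variety argument. However, the paper gives no proof of this lemma at all: it is stated as a citation from \cite[Prop.\ p.~67, \S4.2]{ACGH} and then used as a black box in the dimension counts of Section~\ref{nclosesec}. So there is nothing to compare against in the paper itself.

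That said, your argument is essentially the one found in the cited reference \cite{ACGH}, where the generic determinantal variety is analyzed via the same resolution $\widetilde M_{\le r} \to M_{\le r}$; so in spirit you have reproduced exactly the proof the paper defers to. Your Step~3 is the only place requiring care: the Eagon--Northcott height bound is the clean way to get the upper bound on codimension for an arbitrary matrix, and you are right to flag that the alternative ``preimage of a stratum'' sketch is not quite a proof on its own (a morphism can certainly have preimages of arbitrarily large codimension without further hypotheses). Since the paper only ever applies the lemma as an inequality $\mathrm{cod}\le (m-r)(n-r)$ in concrete rank conditions (Equations~\eqref{dim2eq} and~\eqref{WL2eqn}), the Eagon--Northcott citation is entirely adequate for its purposes.
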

We denote by $r_i=\dim_{\sf k}R_i=\binom{i+2}{2}$; the notation $1_u$ denotes a $1$ in degree $u$.  We recall
\begin{definition}
A \emph{compressed} Gorenstein sequence $H$ of codimension three and socle degree $j$ satisfies 
\begin{equation}\label{compeq}
H=H(j)=(1,3,\ldots, h_u,\ldots, 1_j) \text 
{ where }h_u=\min(r_u,r_{j-u}).
\end{equation}
When $j$ is even these have been called {\it extremal } Gorenstein sequences \cite{Sch}.
We will term the Hilbert function $T$ \emph{almost compressed} when $j$ is even, and we have $$T=H(j)-1_{j/2}.$$ For example, $T=(1,3,6,3,1)$ is compressed, and $T=(1,3,5,3,1)$ is almost compressed.
\end{definition}

\vskip 0.3cm
\begin{figure}[h]
\begin{center}
\small
$\begin{array}{c||ccc|cc|}
&a=6&a=5&a=4&b=6&b=5\\
\hline
\dim \Gor(T)&14&13&11&20&15\\
\dim U_T&\ge 16&\ge14&\ge 10^\ast&\ge 22&\ge 14^\ast\\\hline
&&T=(1,3,a,3,1)&&T=(1,3,b,b,3,1)&\\
\hline
\end{array}$
\caption{Dimensions of multiple components of $\Gor(T)$,}\label{compfig}
 for $ T=(1,3,a,3,1), a=5,6$, and $T=(1,3,6,6,3,1)$.
\end{center}
\end{figure}\vskip -0.7cm
By the asterisk on two entries of Figure \ref{compfig}
 we mean\par
\quad
\small $\ge 10^\ast: \, U_T, T=(1,3,4,3,1)$ is likely in the closure of $\Gor(T)$;\par
\quad \small $\ge 14^\ast: \, U_T, T=(1,3,5,5,3,1)$ may be in the closure of $\Gor(T)$.\par
\normalsize
\begin{definition}\label{UTdef} Given a Hilbert function $T=(1,3,\ldots, 1_j,0)$ we will denote by $U_T\subset G_T$ the family of graded Artinian quotients $ A=R/I$ of $R$ such that $I_j^\perp=L^{[j]}$ the (divided) power of a linear form $L\in S_1$.  We denote by $U_{T,s}, 1\le s\le 3$ the subfamily of $G_T$ for which the socle-degree $j$ dual generator $F_j$ satisfies $\dim_{\F} R_1\circ F=s$. Equivalently $H(R/\Ann F_j)=(1,s,\ldots,s,1_j)$, satisfying $h_1=h_{j-1}=s.$ This is just the subfamily of $G_T$ where $F_j$, after a suitable coordinate change, involves a minimum of exactly $s$ variables.  Here $U_T=U_{T,1}$. 
\end{definition}
\begin{example}[Algebras in $U_{T,i}$]\label{UTex} Let $T=(1,3,3,1)$. Then the Artinian algebra $A(1)=R/I(1), I(1)=\Ann (X^3,Y^2,Z^2)=(xz,yz,xy,y^3,z^3, x^4)$ is in $U_{T,1}$. The algebra $A(2)=R/I(2), I(2)=\Ann (X^2Y,Z^2)=(xz,yz,y^2,x^3,z^3)$ is in $U_{T,2}$; and the algebra $A(3)=R/I(3), I(3)=\Ann(XYZ)=(x^2,y^2,z^2)$ is in $U_{T,3}$.
\end{example}

By {\it closure} of an algebraic set we will mean its Zariski closure.
\begin{lemma}\label{closurelem} For $T=(1,3,\ldots,3_{j-1},1_j,0)$ the closure $\overline{U_T}\subset G_T$ is just
$\overline{U_T}=U_T$.  The closure of $U_{T,2}$ can contain no elements of $U_{T,3}$. When $T=(1,3^k,1)$ we have $U_{T,3}=\Gor(T)$.\par For any Gorenstein sequence $T=(1,r,\ldots,r,1)$ with $r>1$ we have $\overline{U_T}\cap \Gor(T)=\emptyset$.
\end{lemma}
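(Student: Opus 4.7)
The first two assertions reduce to semicontinuity on the apolar side. For $A=R/I\in G_T$ with $T_j=1$, the subspace $I_j\subset R_j$ has codimension one, so $I_j^\perp\subset S_j$ is a line; the assignment $A\mapsto I_j^\perp$ defines a morphism $\phi\colon G_T\to \mathbb{P}(S_j)$. The locus $\{[L^{[j]}]:L\in S_1\}$ is the image of the $j$-th Veronese embedding of $\mathbb{P}(S_1)$ and is Zariski closed in $\mathbb{P}(S_j)$, so $U_T=\phi^{-1}(\nu_j(\mathbb{P}(S_1)))$ is closed in $G_T$, giving $\overline{U_T}=U_T$. For the second assertion, the map $[F]\mapsto \dim_{\sf k}R_1\circ F$ is the rank of a matrix whose entries depend linearly on the coefficients of $F$, so $\{[F]:\dim_{\sf k}R_1\circ F\le 2\}$ is closed in $\mathbb{P}(S_j)$; pulling back along $\phi$, the union $U_{T,1}\cup U_{T,2}$ is closed in $G_T$, and hence $\overline{U_{T,2}}\subseteq U_{T,1}\cup U_{T,2}$ meets $U_{T,3}$ in the empty set.

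For $T=(1,3^k,1)$ I would establish $U_{T,3}=\Gor(T)$ by double inclusion. The containment $\Gor(T)\subseteq U_{T,3}$ is immediate, since any $A=R/\Ann(F)\in\Gor(T)$ satisfies $\dim_{\sf k}R_1\circ F=H(A)_1=3$. For the reverse, let $A=R/I\in U_{T,3}$ with $I_j^\perp=\langle F\rangle$ and set $B=R/\Ann_R(F)$. First I would verify $I\subseteq \Ann_R(F)$: for $r\in I_i$ one has $R_{j-i}\cdot r\subseteq I_j=\Ann(F)_j$, hence $R_{j-i}\circ(r\circ F)=0$ in $S_0$, and since no nonzero element of $S_{j-i}$ can be annihilated by all of $R_{j-i}$ under the contraction action, we conclude $r\circ F=0$, i.e., $r\in \Ann(F)_i$. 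This yields a graded surjection $A\twoheadrightarrow B$, and thus $H(B)\le H(A)=T$ pointwise. By the $U_{T,3}$ hypothesis, $H(B)_1=\dim_{\sf k}R_1\circ F=3$, and Gorenstein symmetry of $B$ then gives $H(B)_{j-1}=3$. Invoking the unimodality of codimension-three Gorenstein Hilbert functions (Stanley \cite{St}), we obtain $H(B)_i\ge 3$ for $1\le i\le j-1$; combined with $H(B)_i\le T_i=3$ this forces $H(B)=T$, so the surjection $A\twoheadrightarrow B$ is an isomorphism and $A=B$ is Gorenstein.

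The fourth assertion then follows from a direct check on the dual generator combined with the first. If $A=R/I\in U_T$ for $T=(1,r,\ldots,r,1)$ with $r>1$, then $I_j^\perp=\langle L^{[j]}\rangle$ and $\dim_{\sf k}R_1\circ L^{[j]}=1$; were $A$ also Gorenstein, then $A=R/\Ann(L^{[j]})$ and $H(A)_1=\dim_{\sf k}R_1\circ L^{[j]}=1$, contradicting $H(A)_1=r>1$. Hence $U_T\cap \Gor(T)=\emptyset$, and by the first assertion $\overline{U_T}\cap \Gor(T)=U_T\cap \Gor(T)=\emptyset$. The main delicate step of the whole argument is the use of codimension-three Gorenstein unimodality in the third assertion to promote the pointwise inequality $H(B)\le T$ with compressed endpoints $H(B)_1=H(B)_{j-1}=3$ to the equality $H(B)=T$; an elementary alternative would apply Macaulay growth bounds to $H(B)$ directly, but Stanley's theorem handles this cleanly within the references already cited in the paper.
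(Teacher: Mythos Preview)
Your proof is correct and follows essentially the same approach as the paper: both arguments establish closedness via the rank condition $\dim_{\sf k}R_1\circ F\le s$, then prove $U_{T,3}=\Gor(T)$ by showing $H(R/\Ann F)=T$ from the inclusion $I\subseteq\Ann F$ and the endpoint values $H(B)_1=H(B)_{j-1}=3$. The only minor difference is that the paper uses Macaulay growth directly for the middle values (the alternative you yourself mention), whereas you invoke Stanley's unimodality; both are adequate and you correctly identify them as interchangeable.
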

\begin{proof} Let $A(w),w\in W$ be a subfamily of\, $G_T$, and let $F(w)\in S_j$ (up to non-constant multiple) be the degree-$j$ dual generator of $A(w)$. The condition $\dim_\F R_1\circ F(w)\le s$ asks that the module $R_1\circ F\subset S_{j-1}$ have rank no greater than $s$, which defines a closed condition on $W$. This also implies the last statement of the Lemma.\par
When $T=(1,3^k,1)$ and $F\in S_j$ is a dual generator satisfying $\dim R_1\circ F=3$ and $\dim R_i\circ F\le 3$ for $1\le i\le j-1$, we have by symmetry that the Hilbert function of $A_F=R/\Ann F)$ satisfies $H_F=(1,3,\ldots,3,1)$; but the Macaulay equation~\eqref{Macaulayeq} implies that for $i\ge 2$, $$H(R/\Ann F)_i=2\Rightarrow H(R/\Ann F_i)_{i+1}\le 2.$$ Since $(H_F)_{j-1}=3$ this implies that $H_F=(1,3^k,1)$ so $A=R/\Ann F$ and is an element of $U_{T,3}=\Gor(T)$.  Evidently, if $A_F\in \Gor(T)$ then $\dim R_1\circ F=3$: we have shown $U_{T,3}=\Gor(T)$.
\end{proof}
Consider the family $\CI (\mathcal D)$ of graded complete intersection quotients of $R={\sf k}[x,y,z]$ having generator degrees $\mathcal D=\mathcal(d_1,d_2,d_3)$, and denote by $T(\mathcal D)$ their Hilbert function, so $\CI(\mathcal D)=CI_{T(\mathcal D)}$. It is well-known (as \cite[Example 3]{I2a},also \cite[\S 4.6]{Di}, \cite[Equation~3.1.4]{IK}) that with $T=T(\mathcal D)=(t_0,t_1,\ldots )$, 
\begin{equation}\label{CIeqn}
\dim \CI(\mathcal D)=t_{d_1}+t_{d_2}+t_{d_3}.
\end{equation}
\begin{theorem}[Smallest $G_T$ not in $\overline{\Gor(T)}$]\label{small2compprop} Let $5\le a\le 6$ and $T=(1,3,a,3,1)$, 
 or let $T=(1,3,6,6,3,1)$. Then $G_T$ has at least two irreducible components, $\overline{\Gor(T)}$, and a second, larger component $U_T$ of Definition \ref{UTdef}, whose dimensions are given in Figure \ref{compfig}.

\end{theorem}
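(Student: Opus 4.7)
The plan is to parametrize $U_T$ explicitly as an irreducible variety and compute its dimension for each $T$ in the statement, then compare with the known $\dim\Gor(T)$. Since $\Gor(T)$ is irreducible in codimension three (Diesel \cite{Di}) and open in $G_T$ by definition, the closure $\overline{\Gor(T)}$ is itself an irreducible component of $G_T$ of the stated dimension. A strict inequality $\dim U_T > \dim\Gor(T)$ will force $\overline{U_T}$, itself irreducible, to lie in a distinct irreducible component of $G_T$, giving the asserted two-component decomposition.

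For the parametrization, fix $[L]\in\mathbb P(S_1)$ (contributing $2$ parameters); the condition $I_j^\perp=\langle L^{[j]}\rangle$ determines $I_j=\Ann(L^{[j]})_j$, of codimension one in $R_j$. For each $k<j$, the ideal conditions $R_1I_k\subset I_{k+1}$ dualize inductively (using $R_1\circ L^{[m]}=\langle L^{[m-1]}\rangle$) to the necessary inclusion $I_k\subset (L^{[k]})^\perp$, of codimension one in $R_k$, with $R_1I_{k-1}\subset I_k$ also required. Since $I_k\subset (L^{[k]})^\perp$ already forces $R_1I_k\subset (L^{[k+1]})^\perp$, no further constraints arise whenever $I_{k+1}=(L^{[k+1]})^\perp$. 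For $T=(1,3,6,3,1)$ the only free choice is $I_3\in\Grass(7,(L^{[3]})^\perp)=\Grass(7,9)$ of dim $14$, with $I_4=(L^{[4]})^\perp$ forced, giving $\dim U_T=2+14=16$. For $T=(1,3,5,3,1)$, we have $I_2\in\mathbb P((L^{[2]})^\perp)=\mathbb P^4$ (dim $4$) and $I_3/R_1I_2\in\Grass(4,6)$ (dim $8$) with $I_4=(L^{[4]})^\perp$ forced, giving $\dim U_T=2+4+8=14$. For $T=(1,3,6,6,3,1)$, $I_3\in\Grass(4,(L^{[3]})^\perp)=\Grass(4,9)$ (dim $20$) and $I_4=R_1I_3$ is forced generically (since $\dim R_1I_3=12=\dim I_4$), giving $\dim U_T=2+20=22$. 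In each case $U_T$ is the image of an irreducible Grassmannian bundle over $\mathbb P(S_1)$, hence irreducible.

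For $\dim\Gor(T)$, the sequences $T=(1,3,6,3,1)$ and $T=(1,3,6,6,3,1)$ are compressed Gorenstein, so a generic $F\in\mathbb P(S_j)$ realizes $T$ and $\dim\Gor(T)=r_j-1\in\{14,20\}$; for the almost-compressed $T=(1,3,5,3,1)$, $\Gor(T)$ is cut out by the rank-$5$ catalecticant condition on $F\in\mathbb P(S_4)$, of codimension $(6-5)^2=1$ by Lemma~\ref{codlem}, giving $\dim\Gor(T)=13$. In all three cases the strict inequality $\dim U_T>\dim\Gor(T)$ rules out $\overline{U_T}\subset\overline{\Gor(T)}$; since $\overline{U_T}$ is irreducible, it lies in a unique component of $G_T$, necessarily distinct from $\overline{\Gor(T)}$. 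As a complementary sanity check, the argument of Lemma~\ref{closurelem} still gives $\overline{U_T}\cap\Gor(T)=\emptyset$: elements of $\overline{U_T}$ have $I_j^\perp=\langle L^{[j]}\rangle$ with $\dim R_1\circ L^{[j]}=1$, whereas elements of $\Gor(T)$ have $\dim R_1\circ F=T_{j-1}=3$, and the rank condition is closed.

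The main obstacle is verifying that a generic member of the constructed family has Hilbert function exactly $T$, rather than a smaller one: one must show that the expected values $\dim R_1I_k=r_{k+1}-T_{k+1}$ are attained on a dense open in the Grassmannian parametrizing $I_k\subset (L^{[k]})^\perp$. This follows from semicontinuity (Lemma~\ref{openlem}) combined with the Macaulay bound (Lemma~\ref{Maclem}); for these small cases it can be verified directly by exhibiting one explicit member, after which openness of maximal rank propagates the Hilbert function across a dense open of the parameter variety. The lower-bound notation $\ge$ in Figure~\ref{compfig} reflects that our explicit parametrization realizes at least a dense open subfamily of the component containing $U_T$, with equality expected.
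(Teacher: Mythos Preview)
Your proof is correct and reaches the same conclusions as the paper, but by a genuinely different route. The paper works entirely on the dual side: it parametrizes the Macaulay dual module $\mathcal B = I_{j-1}^\perp \subset S_{j-1}$ (a $3$-dimensional space containing $L^{[j-1]}$) and then, for the non-compressed cases $T=(1,3,5,3,1)$ and $T=(1,3,6,6,3,1)$, imposes a rank condition on the contraction map $R_1\circ\mathcal B$, invoking the determinantal codimension formula (Lemma~\ref{codlem}) to bound the codimension. You instead work on the ideal side and parametrize the graded pieces of $I$ directly as a flag: for $T=(1,3,5,3,1)$ you choose $I_2\in\mathbb P((L^{[2]})^\perp)$ and then $I_3/R_1I_2\in\Grass(4,6)$; for $T=(1,3,6,6,3,1)$ you choose $I_3\in\Grass(4,(L^{[3]})^\perp)$ and observe that $I_4=R_1I_3$ is forced when $\dim R_1I_3=12$.

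Your approach has the advantage of yielding the dimension by a direct product of Grassmannian dimensions, with no appeal to rank-locus codimension bounds. The paper's approach is more uniform (always parametrize $I_{j-1}^\perp$ and impose rank conditions), and for $T=(1,3,6,6,3,1)$ it also explicitly rules out the degenerate locus where $R_2\circ\mathcal B\ne S_2$ by a separate dimension count. One point that deserves an explicit check in your argument is the claim that $\dim R_1I_3=12$ holds for generic $I_3\in\Grass(4,(L^{[3]})^\perp)$: this is true (e.g.\ $I_3=\langle y^3,z^3,xyz,x^2y-x^2z\rangle$ with $L=X$ works), but monomial choices tend to have linear syzygies, so exhibiting a witness is not entirely trivial. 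Your final paragraph correctly acknowledges that such a witness is what is needed.
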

\par

\begin{proof}  For each of the cases $T=(1,3,a,3,1)$ and $T=(1,3, 6,6,3,1)$ we show the step-by-step construction of $U_T$. \par i. First consider $T=(1,3,a,3,1)$. Choose an element $L\in S_1$: this is a choice parametrized by $\mathbb P(S_1)=\mathbb P^2$. We will take $F=L^4$ as our first
 dual generator, so $A^\vee \supset \langle 1,L,L^2,L^3,L^4\rangle.$\par
i.a. Now let $a=6$. Let $W_L$ parametrize the three-dimensional sub-vector spaces of $S_3$ that contain $\langle L^3\rangle$. Here $W_L\cong \Grass(9,2)$, in effect parametrizing two dimension subspaces of $S_3/\langle L^3\rangle.$\par
Let $U_L\subset G_T$ be the subset comprised of algebra quotients $A_{\mathcal B}$ of $R$ having dual generators $\langle L^4,\mathcal B\rangle$ where $\mathcal B\in W_L$  is general enough so that $R_1\circ \mathcal B=S_2$. That this occurs for $\mathcal B$ in a Zariski open dense subset of $W_L$ is a consequence of  the theory of relatively compressed algebras \cite[Theorem III]{I2}. Thus, each Artinianalgebra $A_{\mathcal B}, B\subset U_L$ satisfies $A_{\mathcal B}=R/\Ann \,\langle\mathcal B,L^4\rangle$, and has Hilbert function $H(A_\mathcal B)=T=(1,3,6,3,1)$.\par
Evidently, $U_T$ of Definition \ref{UTdef} satisfies
 \begin{equation}\label{dim1eq}\pi: U_T\to \mathbb P^2:  \pi(A_{\mathcal B})= L,
 \end{equation}
  is the bundle whose fiber over $L\in S_1$ is $U_L$. We have $U_T\subset G_T$, and $$\dim U_T=\dim \mathbb P^2+\dim \Grass(9,2)=16.$$  But here the algebras $A\in\Gor(T)$ are parametrized by their dual generators $F_A\in S_4\mod {\sf k}^*$-multiple: this is an open dense subvariety of $\mathbb P(S_4)\cong \mathbb P^{14}$ so $\Gor(T)$ has dimension 14, smaller than that of $U_T$.  It follows that $U_T$ and $\overline{Gor(T)}$ are two irreducible components of $G_T$, when $T=(1,3,6,3,1)$.\vskip 0.2cm\par
i.b. Now let $a=5$. Again we fix $L\in S_1$ and take the dual generator $F=L^4$. Then to determine an element of $G_T$, we must consider the elements $\mathcal B$ of $W_L$ (as above, three-dimensional subspaces of $S_3$ containing $L^3$) such that 
\begin{equation}\label{smalleq}
\dim R_1\circ \mathcal B =5.
\end{equation} Recall  that we have $\mathcal B=\langle L^3,v_1,v_2\rangle $, so 
\begin{equation}\label{dim2eq}
R_1\circ \mathcal B=\langle L^2,R_1\circ v_1, R_1\circ v_2\rangle \subset S_3.
\end{equation}
The condition \eqref{smalleq} here asks that an ostensibly $6\times 7$ matrix of Equation \eqref{dim2eq} have rank $5$. The codimension of this condition, by Lemma \ref{codlem} and Equation \eqref{codrankeq}, is no greater than $(7-5)(6-5)= 2$. We let $U_L$ parametrize those elements (3-dimensional vector spaces) $\mathcal B\in W_L$ satisfying the condition \eqref{smalleq}, so $A_{\mathcal B}$ has Hilbert function $H(\mathcal B)=T(a)=(1,3,5,3,1)$. Thus, $U_T$ is a bundle over $ \mathbb P^2$ (parametrizing the $L$) whose fiber over a linear form $L$ is $U_L$.\par
From the above codimension count and Equation~\eqref{dim1eq} we have that $\dim U_T(5)=16-2=14$;  however, $A\in\Gor\, T, T=(1,3,5,3,1)$, is defined by the vanishing of $Cat_{2,2}(F_A)$, the catalecticant of the dual generator $F_A\in S_4$, so $\Gor(T)$ has dimension $14-1=13$.

Thus, when $a=5$, $G_T$ has (at least) the two irreducible components $U_T$ and $\overline{\Gor(T)}$.
\vskip 0.2cm\par
ii. Now consider $T=(1,3,6,6,3,1)$. The dimension of $\Gor(T)=\dim {\mathbb P}(S_5)=20$, as $A\in \Gor(T)$ is parametrized by the dual generator $F_A\in S_5$, up to non-constant scalar multiple.  We next construct $U_T$ similarly to  the previous case (i).\par
As before, choose $L\in S_1$, a choice parametrized by $\mathbb P(S_1)=\mathbb P^2$, and now take $F=L^5$, so $A^\vee\supset \langle 1,L,L^2,L^3,L^4,L^5\rangle$. Let $W_L$ parametrize the three-dimensional sub-vector spaces $\mathcal B\subset S_4$ containing $L^4$. Here 
\begin{equation}\label{WL2eqn} W_L\cong  \Grass(S_4/\langle L^4\rangle,2)=\Grass(14,2) \text{ of dimension } 24.
\end{equation}
 By the theory of relatively compressed algebras, for a dense open set of  $\mathcal B \in W_L$ we have $\dim R_1\circ \mathcal B=7$ \cite[Theorem III]{I2}. That is, $\mathcal B=\langle L^4,v_1,v_2\rangle$ and the 6 partials of general enough elements $v_1,v_2$ are linearly independent of $L^3$. But we need $\dim_{\sf k}R_1\circ \mathcal B =6$. The matrix for $R_1\circ \mathcal B\subset S_3$ is a $7\times 10$ matrix: by Lemma~\ref{codlem} and Equation \eqref{codrankeq} the rank six locus has codimension at most $(7-6)(10-6)=4$. Then $U_T$ is the family of algebras $A_{\mathcal B}=R/\Ann\langle L^5,\mathcal B\rangle$ such that $\dim R_1\circ \mathcal B=6 $ and $R_2\circ \mathcal B=S_2$ (the latter is again true generically for an open dense family of $\mathcal B$ by \cite[Theorem III]{I2}.\par 
  We now will show that in general $R_2\circ \mathcal B= S_2$, even on the locus where $\dim R_1\circ \mathcal B=6$. Suppose $R_2\circ \mathcal B\not=S_2$. Choose $f\in R_2$, a choice of $f\in\mathbb P(R_2)= \mathbb P^5$.  Then $H(R/(f))=(1,3,5,7,9,\ldots)$ and $\mathcal B$ must be a $3$-dimensional space inside a $9$ dimensional space $f^\perp\cap S_4$, or in effect, since $L$ is chosen, a $ 2$-dimensional  space inside an $8$ dimensional space. It follows that such $\mathcal B$ are parametrized by a $\Grass(8,2)$, a $12$ dimensional family: adding the choice of $f$ and of $L$ gives a $19$-dimensional family. But the family $U_L$ of $ \mathcal B\supset L^4$ and satisfying $\dim R_1\circ B=6$ has by Lemma \ref{codlem} and Equation \eqref{WL2eqn} dimension at least
 $$\dim U_T\ge 2+24-4=22,$$ 
 so generically $\mathcal B$ satisfying the rank condition $\dim_{\sf k}R_1\circ \mathcal B=6$  and $R_2\circ \mathcal B=S_2$ has dimension at least $22$, compared to $20$ for $\Gor(T)$ so $G_T$ has (at least) the two irreducible components $U_T$ and $\overline{\Gor(T)}$ when $T=(1,3,6,6,3,1)$.\par
 This completes the proof of the Theorem.
\end{proof}\par

\vskip 0.2cm\noindent
We include an example due to the fourth author.
\begin{example}[J. Yam\'{e}ogo] \cite[Lemma 8.3, p. 251]{IK}.  When $T=(1,3,4,3,1)$ then
\begin{enumerate}[(i).]
\item The ideal $I=\Ann W, W=\langle L^4,X^3,Y^3,XY\rangle$ which satisfies $I=(xz,yz,x^2y,xy^2,x^4,y^4,z^5)$, is a singular point of $G_T$.
\item Let $f\in S_4$ be in the closure $\overline{\Gor(T)_4^\perp}$ of the dual generators of ideals in $\Gor(T)$. Then the fiber of $G_T$ over $f$ (those algebras $A=R/I\in G_T$ such that $I_4=(\Ann f)_4$) consists of those ideals $I $ ``in'' $G_T$ with $I\subset J=\Ann f$.
\end{enumerate}
\begin{question} For $T=(1,3,4,3,1)$ do we have $\overline{\Gor(T)}=G_T$?
\end{question}
The answer ``Yes'' would imply that the cases $T=(1,3,6,3,1)$ and $T=(1,3,5,3,1)$ are the only Gorenstein sequences of socle-degree $4$ and embedding dimension three, where $\overline{Gor(T)}\not=G_T$.
\end{example}
\section{Isomorphism classes of graded Artinian algebras of Hilbert function $T=(1,3^k,1)$.}\label{isomsec}
Recall that $R={\sf k}[x,y,z], S={\sf k}_{DP}[X,Y,Z]$ the action of $R$ on $S$ is contraction. Here $\{V\}$ denotes the $\Pgl(3)$ isomorphism class of nets equivalent to the vector space $V$,  $\Grass_{\{V\}}=\Grass_{\{V\}}(R_2,3)$ parametrizes the family of nets of conics $\Pgl(3)$ isomorphic to $V$, and $G_{\{V\}}(T)$ parametrizes the family of all graded algebras having $I_2\cong V$. The dimension $d(V)$ of the family $\Grass_{\{V\}}$ is indicated at the left of Figure \ref{1fig}.  We will denote by $\Gor_V(T)$ and $G_V(T)$ respectively the family of 
Artinian Gorenstein quotients $A=R/I$ having $I_2=V$, and all such Artinian quotients; and by $\Gor_{\{V\}}(T)$ and $G_{\{V\}}(T)$, respectively, those quotients for which  $I_2\cong V$ - is $\Pgl(3)$ congruent to $V$. We will also denote the by $\G_{\#6b}(T)=G_{\{V\}}(T)$ - for $ V$ of \#6b.\par
In Section \ref{4.1sec} we present some ingredients needed for the classification.  In Section \ref{4.2sec} we show the classification Theorem 5, including the Betti tables for each class. The statement of cases and as well the  proof of Theorem 5 extends from page \pageref{thm5restate} to \pageref{endproofthm5}.
\subsection{Parametrizing Artinian algebras in $G_{\{V\}}(T)$.}\label{4.1sec}
When $T=(1,3^k,1)$ with $k\ge 3$ and $A=R/I\in G_T$, then the vector space $V=I_2$ must have scheme length three, or be in case \#2a, as otherwise $H(R/(I_2))=(1,3,3,\ell,\ldots)$ where $\ell=\ell(V)$, and $H(R/I)$ would be smaller than $T$.

We will first show 
\begin{theorem}\label{dimthm} We let $d= d(V)$ be the dimension of the family of nets of conics that are $\Pgl(3)$ equivalent to $V$.  Let $T=(1,3^k,1)$ with $k\ge 2$. Then $G_V(T)$ and $G_{\{V\}}(T)$ are irreducible.\begin{enumerate}[(i).]
\item Let $T=(1,3,3,1).$  Then the family of Artinian algebras $G_{\{V\}}(T)$ has dimension $d$ if $\ell=0$, dimension $d+\ell-1$ if $1\le \ell\le 3$; and has dimension $5$ if $\ell=\infty$ (the case $V=\#$2a).
\item Let $T=(1,3^k,1)$ with $k\ge 3$. Then  $G_{\{V\}}(T)$ has dimension $d+2$ if $\ell=3$ and dimension $7$ if 
$\ell=\infty$ (case V=\#2a), and is empty otherwise.
\end{enumerate}
\end{theorem}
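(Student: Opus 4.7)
The strategy is to fiber $\pi: G_{\{V\}}(T) \to \Grass_{\{V\}}(R_2,3)$ by $A=R/I \mapsto I_2$, over the $\Pgl(3)$-orbit of $V$, which is an irreducible homogeneous variety of dimension $d(V)$ read off Figure~\ref{1fig}. By transitivity of the action on the base, all fibers of $\pi$ are isomorphic to $G_V(T)$ for a fixed representative $V$; since $G_{\{V\}}(T)$ is the image of the irreducible $\Pgl(3) \times G_V(T)$ under a morphism, it will suffice to prove $G_V(T)$ is irreducible and compute $\dim G_V(T)$, from which $\dim G_{\{V\}}(T) = d(V) + \dim G_V(T)$ follows. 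I plan to present $G_V(T)$ as a tower of projective bundles: at each degree $i$ where the Hilbert function of $R/(V,\text{previous choices})$ strictly exceeds $T_i$, the datum $I_i$ is a codimension-$c$ subspace of a quotient of $R_i$ and hence a point of an irreducible Grassmannian.

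\textbf{Case analysis via $\ell(V)$.} Lemma~\ref{elimlem} pins down $H(R/(V))$ from the scheme length $\ell=\ell(V)$, and this governs all dimension counts. For $T=(1,3,3,1)$ and $\ell$ finite, $H(R/(V))_i=\ell$ for $i\ge 3$, so the sole choice is in degree $3$: a codimension-one subspace of $R_3/(V)_3 \cong {\sf k}^\ell$, which is $\mathbb P^{\ell-1}$ of dimension $\ell-1$ (a single point when $\ell\in\{0,1\}$); passing to $\m^4$ completes the ideal. This yields $\dim G_{\{V\}}(T) = d+\ell-1$, with the case $\ell=0$ collapsing to $\dim = d$. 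For $T=(1,3^k,1)$ with $k\ge 3$, the requirement $H(A)_i=3$ for $3\le i\le k$ forces $\ell\ge 3$, and by Lemma~\ref{elimlem} finite-$\ell$ then requires $\ell=3$, with $I_i=(V)_i$ in all intermediate degrees. The only degree of freedom is in $j=k+1$: a codimension-one subspace of $R_j/(V)_j\cong {\sf k}^3$, giving $\mathbb P^2$ of dimension $2$, yielding $\dim G_{\{V\}}(T)=d+2$. The cases $\ell\in\{0,1,2\}$ with $k\ge 3$ are empty, since then $H(R/(V))_3\le 2<3$.

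\textbf{The $\ell=\infty$ case ($V=\#2a$).} This requires bespoke treatment because $H(R/(V))_i=i+1$ is unbounded. Write $V=x\m$ and use the decomposition $R/(V) = {\sf k}[y,z] \oplus {\sf k}\cdot x$. For $T=(1,3,3,1)$, the fiber is the choice of a codimension-one subspace of $R_3/(V)_3 \cong {\sf k}[y,z]_3 \cong {\sf k}^4$, a $\mathbb P^3$ of dimension $3$, yielding $\dim G_{\{V\}}(T)=d+3=2+3=5$. For $T=(1,3^k,1)$ with $k\ge 3$, one first chooses a nonzero $f\in R_3/(V)_3 \cong {\sf k}[y,z]_3$, parametrized by $\mathbb P^3$. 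Since ${\sf k}[y,z]$ is a domain, multiplication by the image $\bar f$ is injective on ${\sf k}[y,z]_{i-3}$ for every $i\ge 3$, so $H(R/(V,f))_i=(i+1)-(i-2)=3$ for all $i\ge 3$, independently of which nonzero $f$ was chosen. No further intermediate cutting is needed; one need only pick, in degree $j=k+1$, a codimension-one subspace of $R_j/(V,f)_j\cong {\sf k}^3$, a $\mathbb P^2$ of dimension $2$. The fiber is then a $\mathbb P^2$-bundle over $\mathbb P^3$, irreducible of dimension $5$, and $\dim G_{\{V\}}(T) = 2+5 = 7$.

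\textbf{Main obstacle.} The delicate step is the $V=\#2a$ case for $k\ge 3$: one must confirm that $H(R/(V,f))_i=3$ is constant in the choice of nonzero $f$ (so that the next-level quotient $R_j/(V,f)_j$ has constant rank $3$ and the fibers assemble into an honest projective bundle rather than a union of varying pieces), and that no further constraints appear in intermediate degrees $4\le i\le k$. Both reduce to the observation that, modulo the ${\sf k}\cdot x$ summand, $R/(V,f)$ is the principal quotient of ${\sf k}[y,z]$ by a nonzero cubic, with Hilbert function $(1,2,\overline{3})$. The remaining cases are routine once the Hilbert function dictionary from Lemma~\ref{elimlem} and the Grassmannian-bundle structure are in place; irreducibility of $G_V(T)$ follows from the irreducibility of each layer of the tower, and irreducibility of $G_{\{V\}}(T)$ then follows from the $\Pgl(3)$-orbit construction.
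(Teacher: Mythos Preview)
Your proposal is correct and follows essentially the same approach as the paper: fiber $G_{\{V\}}(T)$ over the $\Pgl(3)$-orbit of $V$, identify each fiber with $G_V(T)$, and parametrize $G_V(T)$ by the successive choices of $I_i$ as hyperplanes or subspaces in $R_i/(\text{previous})_i$. Your treatment of the $\#2a$, $k\ge 3$ case is in fact slightly more explicit than the paper's, since you justify $H(R/(V,f))=(1,3,\overline{3})$ via injectivity of multiplication by $f$ on the domain ${\sf k}[y,z]$, whereas the paper simply asserts that $f$ has only trivial relations with $V$. One small slip: your blanket statement ``$H(R/(V))_i=\ell$ for $i\ge 3$'' is false when $\ell=0$ (the CI case has $H_3=1$, $H_4=0$, not $H_i=0$), but since you handle $\ell=0$ separately this does not affect the argument.
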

\noindent
{\it Proof of (i)}. For $\ell(V)=0$ we have $H(R/(V))=(1,3,3,1)$ and $G_V(T)=\Gor_V(T)$, which is irreducible of dimension $d$. For $V$ not in the class \#2a, then $1\le \ell(v)\le 3$, and the Hilbert function $H(R/(V))=(1,3,3,\overline{\ell}), \ell=\ell(V)$.  The family $G_{\{V\}}(T)$ is fibered over $\Grass_{\{V\}}$ parametrizing the nets $\Pgl(3)$-equivalent to $V$, by a fiber that is the projectivization of $R_3/\langle(V)\cap R_3\rangle$, a vector space of $\sf k$-dimension $\ell$: hence the fiber has  dimension $\ell-1$: and  $G_{\{V\}}(T)$ is irreducible of dimension $d+\ell-1$. For $V$ in the case \#2a, the Hilbert function $H(R/(V))=(1,3,3,4,5,\ldots)$ and $I_3$ is determined by a 3-dimensional quotient of $R_3/R_1V$, hence by an element of a projective 3-space $\mathbb P(R_3/R_1V)\cong \mathbb P^3$; so $G_{\#2a}(T)$ is irreducible of dimension $5$. \par\noindent
{\it Proof of (ii).} When $k\ge 3$, and $\ell=3$, then Theorem \ref{schl3thm} Equation~\eqref{schl3eqn} shows that $I_u=(V)_u$ for $2\le u\le j-1$.  Then $I_j$ is determined by adding to $(V)_j$ a two-dimensional subspace linearly disjoint from $(V)_j$: equivalent to choosing a $2$-dimensional subspace of $R_j/(V)_j$, which has dimension three. Thus, $G_V(T)$ is irreducible of dimension two, and $G_{\{V\}}(T)$ is irreducible of dimension $d+2$, as claimed.
\par
When $V=\langle x^2,xy,xz\rangle$ is \#2a in Figure \ref{1fig} the Hilbert function of $H(R/(V))=(1,3,3,4,5,\ldots)$, and $\dim  \Grass_{\{V\}}=2$. Given $V$ we first choose $f\in {\sf k}[y,z]_3$, and $H(R/(V,f))=(1,3,3,\overline{3})$ (since $f$ can have only trivial relations with the elements of $V$): here $f$ is parametrized by $\mathbb P^3$. Then we choose a two dimensional subspace $W$ of the 3-dimensional space $R_j/\langle (V,f)\cap R_j\rangle$, and take $I=(V,f,\mathcal W)$ where $\mathcal W$ in $R_j$ represents $W\mod (V,f)_j$.  Here $W$ is an element of $\mathbb P^2$. This shows  that for $V$ congruent to \#2a, $G_V(T)$ is irreducible of dimension five; then its image $G_{\#2a}(T)=G_{\{V\}}(T)$  under $\Pgl(3)$ action is irreducible of dimension $2+5=7$ when $k\ge 3$.\qquad\qquad\qquad\qquad\qquad\qquad\qquad\qquad\qquad\qquad\quad$\square$
\vskip 0.2cm\par

We may also use that when $T=(1,3,3,1)$ the fiber of  $G_{\{V\}}(T)$ over $\Grass_{\{V\}}$ is isomorphic to an open in the projective space $\mathbb P(S_3/((V)_3^\perp)$ over the dual of $R_3/R_1V$.\par
Recall that specialization of nets $V^\prime$ to $V$ must satisfy $\ell(V^\prime)\le \ell(V)$.  We see from Figure \ref{1fig} that the net $V=(xy,xz,x^2)$ (\#2a) is in the closure of all families of nets in Figure~\ref{1fig} except \#2b.
\begin{corollary}[Specialization]\label{closurecor} 
(i). Let $T=(1,3,3,1)$. Suppose the net $V^\prime$ has dimension $d(V^\prime)=d(V)+1$ and scheme length $\ell(V^\prime)=\ell(V)-1$, and assume $V$ is not in \#7b. Then the Zariski closure of $G _{\{V^\prime\}}(T)$ cannot include $G_{\{V\}}(T)$.\par
(ii). Let  $T=(1,3^k,1), k\ge 3$. Let $V=(xy,xz,x^2)$ (Case \#2a). Then $G_{\{V\}}(T),$ $ T=(1,3^k,1), k\ge 3$ is not in the closure of any $G_{\{V^\prime\}}(T)$ for which $d(V^\prime)\le 5$.\par
\end{corollary}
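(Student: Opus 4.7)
The plan is to prove both statements by a direct dimension count resting on Theorem \ref{dimthm} and Lemma \ref{irredlem}. The key observation is that if $\{V\}\ne\{V'\}$, the strata $G_{\{V\}}(T)$ and $G_{\{V'\}}(T)$ are disjoint, since the $\Pgl(3)$-equivalence class of $I_2$ is an invariant of $A=R/I$; and since $G_{\{V'\}}(T)$ is irreducible, so is its Zariski closure, which has the same dimension. Any closed proper subvariety therefore has strictly smaller dimension. Hence a hypothetical inclusion $G_{\{V\}}(T)\subseteq\overline{G_{\{V'\}}(T)}$ would force the strict inequality $\dim G_{\{V\}}(T)<\dim G_{\{V'\}}(T)$, so to rule it out it suffices to verify the opposite weak inequality in each case.

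For part (i) I would enumerate the admissible pairs $(V,V')$ from Figure \ref{1fig}. When $\ell(V)\in\{0,1\}$ no admissible $V'$ exists: $\ell(V)=0$ forces the impossible $\ell(V')=-1$, while $\ell(V)=1$ together with $V\notin \#7b$ leaves only $V\in \#8a$, for which the condition $d(V')=9$ exceeds the maximum value $8$ attained in Figure \ref{1fig}. When $\ell(V)\in\{2,3\}$, both $\ell(V)$ and $\ell(V')$ lie in $\{1,2,3\}$, so Theorem \ref{dimthm}(i) gives
\[
\dim G_{\{V\}}(T)=d(V)+\ell(V)-1=d(V')+\ell(V')-1=\dim G_{\{V'\}}(T),
\]
and equality of dimensions together with the irreducibility argument above precludes inclusion. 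I would remark that \#7b is the unique case in which $\ell$ drops from $1$ to $0$, where Theorem \ref{dimthm}(i) switches formulas and produces a dimension jump of one; this is exactly the configuration where genuine specialization is possible and is realized by Theorem \ref{k=2GTthm}.

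For part (ii), with $V=\#2a$ and $k\ge 3$, Theorem \ref{dimthm}(ii) gives $\dim G_{\{V\}}(T)=7$. By Remark \ref{kge3rem}, $G_{\{V'\}}(T)$ is nonempty only when $\ell(V')\in\{3,\infty\}$; the case $\ell(V')=\infty$ forces $\{V'\}=\{V\}$, which is excluded from the closure question. For $\ell(V')=3$, Theorem \ref{dimthm}(ii) gives
\[
\dim G_{\{V'\}}(T)=d(V')+2\le 5+2=7=\dim G_{\{V\}}(T),
\]
so again the weak inequality holds in the wrong direction for inclusion, and the irreducibility argument closes the case. The main obstacle I anticipate is only careful bookkeeping across Figure \ref{1fig} to make sure that every admissible pair $(V,V')$ is accounted for, together with an explicit appeal to Lemma \ref{irredlem} so that equality of dimensions genuinely suffices to preclude inclusion; no further deformation-theoretic input beyond what underlies Theorem \ref{dimthm} is required.
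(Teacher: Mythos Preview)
Your proposal is correct and follows essentially the same approach as the paper: both arguments reduce to a dimension comparison via Theorem~\ref{dimthm}, observing that under the stated hypotheses $\dim G_{\{V\}}(T)\ge\dim G_{\{V'\}}(T)$, and then invoking irreducibility (Lemma~\ref{irredlem}) and disjointness of the strata to rule out inclusion. Your write-up is more detailed than the paper's---you make explicit the locally-closed/irreducibility reasoning and the case enumeration over $\ell(V)$---but the underlying idea is identical.
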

\begin{proof}  Corollary \ref{closurecor} (i) follows from Theorem \ref{dimthm}(i) and the scheme lengths in Figure \ref{1fig}. This is so since the dimension of both $G _{\{V^\prime\}}(T)$ and
 $G_{\{V\}}(T)$ are the same under the hypotheses, and each is irreducible.  Corollary \ref{closurecor} (ii.) follows from Theorem~\ref{dimthm}(ii). 
\end{proof}
Recall that Proposition~\ref{2aprop} shows, pertaining to (ii) above, that if the net $V$ is in \#2a,  then $G_{\{V\}}(T)$ is in the closure of $G_{\#6a}(T)$.  
\begin{remark}\label{closurerem} Part (i) of the Corollary applies, for example, to the pairs  (\#5b, \#4), (\#6b,\#5a),(\#6c, \#5a), (\#6d,\#5b), and (\#7b,c, \#6b,c) - simply by reading from Figure \ref{1fig}. Since $G_{\{V^\prime\}}(T)$ is $\Pgl(3)$ invariant, so are their closures. Since all classes but \#8b of nets of conics are  ``discrete'' (do not involve parameters), the closure $\overline{\Grass_{\{V^\prime\}}}\subset \Grass(R_2,3)$ of an isomorphism stratum $\Grass_{\{V^\prime\}}$ of nets over an algebraically closed field of characteristic zero, or of characteristic $p\ne 2,3$ is the union of certain lower strata, specified in \cite{AEI} and in our Figure~\ref{1fig}.\par

The family $G_{\{V\}}(T)$ for a fixed isomorphism class $\{V\}$ of nets of conics is irreducible by Theorem~\ref{dimthm}, but as we will see in Theorem~\ref{5thm} below, the family may contain several - or even an infinite family- of isomorphism classes of Artinian algebras.
\end{remark}
\begin{question} The above results then lead to a typical question in parametrization, analogue of one that baffled the Nice school of Hilbert scheme specialists concerning closures of families $Z_T$ of non-graded Artinian quotients of ${\sf k}[y,z]$ having a given Hilbert function $T$.\footnote{The fundamental result of the irreducibility of $\Hilb^n({\sf k}[y,z])$: that the closure of the stratum $T=(1^n)$ is all length-$n$ Artinian algebra quotients of ${\sf k}[y,z]$ had been shown by J.~Brian\c{c}on in \cite{Bri}, but the closures of other strata are even now only partially understood, see the last author's \cite{Y1,Y2,Y3}.}  Our problem here is to determine properties (as dimension) of the intersections
$\overline{G_{\{V^\prime\}}(T)}\cap G_{\{V\}}(T),$ when $V^\prime$ specializes to $V$, and the scheme length of $V^\prime$ is less than that of $V$. Examples occur in Remark \ref{closurerem} when for dimension reasons there cannot be an inclusion $\overline{G_{\{V^\prime\}}(T)}\supset G_{\{V\}}(T)$ (for example the pair (\#5b,\#4)). Are these intersections irreducible?
\end{question}\par
\subsubsection{Betti tables and cancellation.}\label{Betti1sec}
We recall that any finitely generated graded Artinian algebra $A$ of codimension three, quotient of $R={\sf k}[x_1,x_2,x_3]$, has a minimal graded finite free resolution
\begin{equation}\label{bettieq}
\mathcal F:\quad 0\to  F_3\to F_2\to F_1\to R\to A\to 0,
\end{equation}
where $F_i=\oplus_j R(-j)^{\beta_{i,j}}$, and $\beta_{i,j}=\mathrm{Tor}_i^R(A,{\sf k})_j$. We represent this in a
\emph{Betti table}, whose $(i,j)$ entry in column $i$, row $j$ is $\beta_{i,i+j}$. For example, the minimal resolution
for $A=R/I, I=(xy,xz,yz,x^3-y^3,z^3)$, the ideal in \#6a.ii. below, whose terms $F_3,F_2,F_1, F_0$ are
\footnotesize
\begin{equation}\label{bettiexeq}
 R(-6)\oplus R(-5)\to R(-5)\oplus R(-4)^3\oplus R(-3)^2\to R(-3)^2\oplus R(-2)^3\to R,
\end{equation}
\normalsize
which is encoded in the Betti diagram C, Figure \ref{BettiCfig}.\par
We will in Section \ref{bettisec} discuss problems involving limits of Betti tables. A \emph{consecutive cancellation}
from a resolution $\mathcal F$ to $\mathcal F^\prime$ is given by setting for some pair $(i,j), (i+1,j)$ of adjacent indices where $\beta_{i,j}, \beta_{i,j+1}\not=0$, the Betti indices $\beta^\prime_{i,j}=\beta_{i,j}-1$, 
$\beta^\prime_{i+1,j}=\beta_{i+1,j}-1$. In a Betti table, this will lead to two indices in 
adjacent columns and rows, one diagonally above the other, being each decreased by one. For example
the Betti diagram C of Figure \ref{BettiCfig} arises by consecutive cancellation of $\beta_{3,5}=\beta_{2,5}=1$ from Equation \eqref{bettiexeq} above and Betti diagram D of Figure \ref{BettiDfig}.  Recall that the \emph{lexicographic ideal J} of Hilbert function $H(R/J)=H$ is the unique monomial ideal $J$ first in the lex-order.  I. Peeva has shown \cite[Theorem 1.1]{Pe}
\begin{lemma}\label{Peevalem} (I. Peeva) Let $I $ be a graded ideal and $L$ be the lexicographic ideal with the same Hilbert function. The graded Betti numbers $\beta_{i,j}(S/I)$ can be obtained from the graded Betti numbers $\beta_{i,j}(S/L)$ by a sequence of consecutive cancellations.
\end{lemma}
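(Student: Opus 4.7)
The plan is to establish the lemma via a two-stage Gröbner deformation connecting $I$ to $L$ through the generic initial ideal $\mathrm{gin}(I)$, relying on a key technical result about specialization of Betti numbers in flat families. The structure is: (a) a flat-family cancellation lemma, (b) Gröbner degeneration of $I$ to $\mathrm{gin}(I)$, and (c) a combinatorial deformation of $\mathrm{gin}(I)$ to $L$.

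The central claim to prove is the following flat-family lemma: if $\mathcal{I} \subset R[t]$ is a homogeneous ideal (with $\deg t = 0$) such that $R[t]/\mathcal{I}$ is flat over $\mathbb{k}[t]$ and each fiber $\mathcal{I}_c$ has the same Hilbert function, then $\beta_{i,j}(R/\mathcal{I}_0)$ is obtained from $\beta_{i,j}(R/\mathcal{I}_c)$ for $c \neq 0$ by a sequence of consecutive cancellations. I would prove this by constructing a minimal graded free resolution $\mathbf{F}_\bullet$ of $R[t]/\mathcal{I}$ over $R[t]$. Specializing at $t = 0$ yields, by Nakayama, the minimal graded free resolution of $R/\mathcal{I}_0$, so the ranks of $\mathbf{F}_\bullet$ record exactly $\beta_{i,j}(R/\mathcal{I}_0)$. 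Specializing at a general $c \neq 0$ yields a graded free resolution of $R/\mathcal{I}_c$ with the same ranks; this resolution is typically non-minimal. The non-minimal entries come from entries of the differentials $\mathbf{F}_{i+1} \to \mathbf{F}_i$ lying in $(t)R[t]$ and involving no $x_\ell$'s: such an entry vanishes at $t=0$ (so is compatible with minimality there) but becomes a scalar unit after substituting $t=c$, and each unit entry, via elementary row/column operations, cancels a pair of free summands $R(-j)$ in homological degrees $i$ and $i+1$, i.e.\ a consecutive cancellation.

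With this lemma in hand, the proof proceeds in two applications. First, after a generic $g \in \mathrm{GL}_n$, the Gröbner degeneration of $g\cdot I$ with respect to the reverse lexicographic order produces a flat family $\mathcal{I}_t$ with general fiber isomorphic to $I$ and special fiber $\mathcal{I}_0 = \mathrm{gin}(I)$, which is Borel-fixed by Galligo's theorem. The lemma yields a chain of consecutive cancellations from $\beta(R/\mathrm{gin}(I))$ to $\beta(R/I)$. Second, by a theorem of Pardue, any Borel-fixed ideal with the same Hilbert function as $L$ is connected to $L$ by a finite sequence of flat one-parameter families of homogeneous ideals of constant Hilbert function, realized via polarization/distraction moves on strongly stable ideals. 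Applying the lemma to each step in this chain and composing the resulting cancellations produces the desired chain from $\beta(R/L)$ to $\beta(R/I)$.

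The main obstacle is the flat-family lemma itself: verifying that the non-minimalities in the specialization of $\mathbf{F}_\bullet$ at generic $t$ arise \emph{only} as single-entry cancellations between summands of equal internal degree in adjacent homological degrees, rather than in more exotic patterns. This hinges on the grading convention (giving $t$ degree zero forces every constant-in-$x$ entry to preserve internal degree) together with an induction on homological degree using the mapping cone / long exact sequence associated to $0 \to R[t]/\mathcal{I} \xrightarrow{\,t\,} R[t]/\mathcal{I} \to R/\mathcal{I}_0 \to 0$, which compares the Tor groups of adjacent fibers and forces the Euler-characteristic-preserving changes in the Betti table to decompose into the claimed adjacent pairs.
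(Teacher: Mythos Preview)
The paper does not prove this lemma at all: it is quoted as a result of I.~Peeva with a citation to \cite[Theorem~1.1]{Pe}, and the surrounding text only adds the remark (attributed to M.~E.~Rossi) that the lex ideal $L$ may be replaced by the initial ideal $\mathrm{Lt}(I)$ in any term order. So there is no in-paper proof to compare your proposal against.

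That said, your sketch is a faithful outline of the standard argument: the flat-family cancellation step (specialize a minimal resolution over $R[t]$ and observe that degree-zero unit entries at generic $t$ force consecutive cancellations), the Gr\"obner degeneration $I \rightsquigarrow \mathrm{in}(I)$ (or $\mathrm{gin}(I)$), and Pardue's chain of distraction/polarization deformations from a Borel-fixed ideal to the lex ideal. This is essentially Peeva's proof. One small point: you do not strictly need the generic initial ideal and Galligo's theorem; as the paper's remark indicates, the degeneration to any initial ideal $\mathrm{Lt}(I)$ already suffices, and Pardue's theorem applies to arbitrary monomial ideals, not only Borel-fixed ones. Your worry in the last paragraph about ``exotic patterns'' is overcautious: once $t$ has degree zero, any nonzero entry of the specialized differential lying in $\mathsf{k}$ necessarily connects summands of the same internal degree in adjacent homological degrees, which is exactly the shape of a consecutive cancellation; no further induction on homological degree is needed beyond the standard pruning of a non-minimal free resolution.
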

This was extended to local algebras $A$, and a comparison between the graded Betti numbers for the associated graded algebra and $A$ by M.E. Rossi and L. Sharifan \cite{RoSh}, using negative and consecutive cancellations. 
Maria Evelina Rossi pointed out to us that $L$ above in Lemma \ref{Peevalem} may be replaced by $Lt(I)$ the leading terms of $I$ in any term order (see \cite[Proof (2) of Thm. 1.1]{Pe}).
 \vskip 0.2cm\par\noindent
 \subsection{Classification of isomorphism classes of Artinian algebras of Hilbert function $T$ by the net of conics $V=I_2$: the family $G_V(T)$.}\label{4.2sec}
 We here show \label{thm5restate}
 \par\noindent
 {\bf Theorem 5}. {\it Let $T=(1,3^k,1)$. We determine the isomorphism classes of algebras in $G_V(T)$, the family of Artinian quotients $A=R/I$ in $G_T$ for which $I_2=V$. \par
There are among the nets $V$ of scheme length 3, exactly four isomorphism classes of Gorenstein algebras in $G_V(T)$, namely
 \#6a.i,\#5a.i,\#5a.ii, and \#4.iii., (see the Betti C table in Figure~\ref{BettiCfig}). There are no further non-CI Gorenstein classes of algebras in $G_T$.}\par
 This Theorem summarizes the results of Theorem \ref{sch2algthm} (Section \ref{4.6sec}), Theorem \ref{GTclasseslength3}(Section \ref{2asec} and Theorem \ref{2aAthm}(Section \ref{4.11sec}) for $V$ of scheme length two, three and $\infty$, respectively, where we give the detailed classification, and also specify the Betti tables. A detailed statement of Theorem 5 (minus the Betti tables) is in the Introduction. The notation \#6a.i refers to a specific isomorphism class within $G_{6a}(T)$, that is given in the statement of Theorem \ref{GTclasseslength3}, Equation \eqref{isomclass6aeq}.
We assume throughout this section that $\sf k$ is closed under taking $j$-th roots, or is algebraically closed,
of characteristic not 2 or 3.

 \par\vskip 0.2cm \noindent
 \subsubsection{Classification of algebras in $\Gor(T), T=(1,3^k,1)$.}
{\it Previous classification of isomorphism types of Artinian Gorenstein (AG) algebras having Hilbert function $T=(1,3^k,1), k\ge 2$.} \par
Recall from Proposition \ref{Gotzmannprop}(ii). and Lemma \ref{regslem} that when $k\ge 4$, there is necessarily a length 3 punctual subscheme $\Z\subset\mathbb P^2$, determined as $\Z=\Spech R/(\overline{I_3})$, where $\overline{I_3}$ is the ancestor ideal of $I_3$. This occurs since $(3_3,3_4)$ (meaning a Hilbert function sequence $3$ in degree three, then $3$ in degree $4$) is a maximum growth sequence in the sense of Lemma \ref{Maclem} (minimum possible size of $\dim_{\sf k}R_1V$ given $\dim_{\sf k}V=3$).  Thus, determining $G_T$ is closely related to the study of length 3 punctual subschemes of $\mathbb P^2$. The article of A. Bernardi et al \cite[\S 4.2, Theorem 37]{BGI} classifies the isomorphism types of length 3 subschemes of $\mathbb P^n$ in characteristic zero. When  $n=2$ we may derive this classification from the scheme-length 3 cases of Figure~\ref{1fig}, together with a study of length-3 schemes that are on a line: the latter is related to  Section \ref{2asec} below. P. LeBarz and G.~Elencwajg classify triangles in $\mathbb P^2$:  they determine the cohomology ring structure of $\Hilb^3 (\mathbb P^2)$ \cite{EL} - see aso E. Vasserot's more general \cite{Va}.\par
 G. Casnati, J. Jelisiejew, and R. Notari determine, for arbitrary characteristic, the three isomorphism classes of forms $F\in {\sf k}[X,Y,Z]_j$ whose apolar algebra $A=R/\mathrm{Ann} F$ has Hilbert function $H(A)=(1,3^{k},1) $ when $k\ge 3$ \cite[Proposition 4.9]{CJN}. 
These are
\begin{equation}X^j+Y^j+Z^j, X^{j-1}Y+Z^j, X^{j-2}(XZ+Y^2).
\end{equation}
The statements there are given for characteristic zero, but the arguments hold for $\sf k$ algebraically closed of any characteristic $p\not=2,3$.
J. Jelisiejew \cite{Je} determines the isomorphism types of AG algebras $A$ of Hilbert function $H(A)=(1,3,3,3,1)$ including those for non-graded $A$.  \par
When $T=(1,3^k,1)$ with $k\ge 3$ then all Gorenstein algebras $A=R/I$ of Hilbert function $T$ have five generators. The generators are comprised of the net, which must have scheme length three, and two in the socle degree $j=k+1$. When $k=2$ the ideal $(V)$ of a net may define either a complete intersection, or we may consider Gorenstein algebras  $A=R/I$ whose ideal has five generators - two of degree four (the maximum number possible for $I$ having initial degree two. We see this in our classification Theorem 5. \par\vskip 0.2cm\noindent
\vskip 0.2cm\noindent

We next classify all graded algebras $A\in G_T$, for these Hilbert functions, and specify the dimension of their isomorphism classes, as well as their Betti tables, dividing our classification according to the scheme length of $V=I_2$.

\subsubsection{Isomorphism classes in $G_{\{V\}}T$ when $V=I_2$ defines a length $0$ or $1$ scheme.}
Evidently, these ideals are just $I=(V,\m^4)$ so their isomorphism classes and dimensions are those of the corresponding nets $V=I_2$.  When the scheme length of $V$ is $0$ then $I_2$ defines a complete intersection,
with CI betti table of Figure \ref{BettiCIfig}: these are \#8b,\#8c,\#7c,\#6d.
\par

The Betti resolutions in both \#8a, \#7b, are the table SL1 of Figure \ref{BettiSL1fig}.
\subsubsection{Isomorphism classes in $G_{\{V\}}T$ when $V=I_2$ defines a length two scheme.}\label{4.6sec}
 Here $T=(1,3,3,1)$ is the only Hilbert function possible. We give isomorphism classes $G_{\{V\}}T$ of Artinian algebras where the net $V=I_2$ defines a length two scheme by cases, according to isomorphism class of the net $V$ in  Figure \ref{1fig} (derived from \cite[Tables 1 and 5]{AEI}). These are the cases \#7a,\#6b,\#6c, and \#5b.\par
 \begin{theorem}\label{sch2algthm} The isomorphism classes of graded Artinian algebras of Hilbert function $T=(1,3,3,1)$ for which $V=I_2$ has scheme length two, and their Betti tables are listed as follows:\begin{enumerate}[(i).]
 \item Case $V$ in class \#7a: two classes, Equation \eqref{7aeq};
 \item Case $V$ in class \#6b. There is a one-parameter family of non-isomorphic Artinian algebras, parametrized by $\mathbb P^1$, having  $V=I_2$ in \#6b (Conclusion \ref{6bconclude});
 \item Case $V\in $\#6c: three classes, Equation \eqref{6ceqn};
 \item Case $V\in$ \#5b: three classes, Equation \eqref{5beqn}.
 \end{enumerate}
 There are no Gorenstein algebras having such $V=I_2$, that is $\Gor_V(T)$ is empty for each $V$ of scheme length two.
 \end{theorem}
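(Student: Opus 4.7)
The plan is to reduce the classification of algebras in $G_V(T)$ for $\ell(V)=2$ and $T=(1,3,3,1)$ to a concrete orbit problem on a projective line, then carry it out case by case for the four nets \#7a, \#6b, \#6c, \#5b of Figure~\ref{1fig}. By Lemma~\ref{elimlem} the only possibility for $H(R/(V))$ when $\ell(V)=2$ is $(1,3,3,\overline{2})$, so in particular $\dim_{\sf k}R_3/R_1V=2$. Since $T_3=1$, an algebra $A=R/I\in G_V(T)$ is determined by a choice of non-zero $f\in R_3/R_1V$ (up to scalar), namely
\[
I=(V,f,\mathfrak m^4),
\]
so the ``raw'' parameter space for $G_V(T)$ is $\mathbb P(R_3/R_1V)\cong\mathbb P^1$. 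Two such algebras $R/I$ and $R/I'$ are isomorphic iff there is $\sigma\in\Pgl(3)$ with $\sigma(V)=V$ and $\sigma(f)\equiv c f'\pmod{R_1V}$ for some $c\in{\sf k}^\ast$. Thus the isomorphism classes in $G_{\{V\}}(T)$ are the orbits of the stabilizer $\Stab(V)\subset\Pgl(3)$ acting on $\mathbb P(R_3/R_1V)=\mathbb P^1$.

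The first step for each case is to choose a basis of $R_3/R_1V$ adapted to the normal form of $V$ from Figure~\ref{1fig}. For \#7a with $V=\langle x^2+yz,xy,xz\rangle$, a direct calculation gives $R_3/R_1V=\langle y^3,z^3\rangle$; the symmetry $y\leftrightarrow z$ that preserves $V$, together with rescalings $(y,z)\mapsto (\alpha y,\beta z)$ that preserve $V$ (a one-parameter torus), leaves two orbits on $\mathbb P^1$: one where $f\in\{y^3,z^3\}$ and one where $f=y^3+az^3$ with $a\neq 0$ (all such $a$ collapse to a single orbit), giving the two classes claimed. For \#5b with $V=\langle x^2,y^2,xy\rangle$, $R_3/R_1V=\langle xz^2,z^3\rangle$ (or similar), and the large stabilizer of $V$ (acting on $z$ by scalars and on $\langle x,y\rangle$ by $\Pgl(2)$ fixing $x^2,xy,y^2$'s span) produces three orbits. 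Case \#6c is analogous with its own small stabilizer. In each case the orbit analysis is an elementary piece of classical invariant theory on $\mathbb P^1$.

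The main obstacle is case \#6b, where we expect the stabilizer to act on $\mathbb P(R_3/R_1V)\cong\mathbb P^1$ with generic orbits of dimension $0$, i.e.\ with a one-parameter family of orbits, yielding a continuous (non-discrete) family of isomorphism classes. Here one must compute $\Stab(V)$ explicitly, check that its image in $\Pgl\bigl(R_3/R_1V\bigr)$ is only finite (so that orbits on $\mathbb P^1$ are points, not intervals), and extract the invariant distinguishing the orbits; this provides the $\mathbb P^1$-parametrization referenced in Conclusion~\ref{6bconclude}. I would expect the invariant to be a cross-ratio-like quantity built from the dual generator of the scheme $\Z_V$ and the choice of $f$. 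The detailed justification that no two distinct values give isomorphic algebras reduces, after normalizing $V$, to the non-existence of a coordinate change stabilizing $V$ and mapping $f$ to $cf'$.

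Finally, for the Betti tables one applies Lemma~\ref{Peevalem} (consecutive cancellations from the lex/initial ideal of $I$) or a direct Macaulay2 computation on a representative of each orbit; since the resolutions are invariant under isomorphism, one representative per class suffices. The emptiness statement $\Gor_V(T)=\emptyset$ for every $V$ of scheme length two is already supplied by Lemma~\ref{nonemptyGorlem}, which analyses $(V_j)^\perp$ and shows $\dim_{\sf k}R_1\circ F\le 2$ for generic $F$ in the dual space, so no Hilbert function $T=(1,3,3,1)$ Gorenstein algebra can have such $V$ as its degree-two component. Assembling the four case analyses, the count of orbits, and the corresponding Betti tables yields the theorem.
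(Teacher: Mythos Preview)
Your overall strategy---parametrize $G_V(T)$ by $\mathbb P(R_3/R_1V)\cong\mathbb P^1$ and then quotient by the action of $\Stab(V)\subset\Pgl(3)$---is exactly what the paper does, and your treatments of \#7a and of the Gorenstein-emptiness statement match the paper's.

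Two points need correction. First, your normal form for \#5b is wrong: you wrote $V=\langle x^2,y^2,xy\rangle$, which is \#2b (scheme length three, with $\dim R_3/R_1V=3$), not \#5b. The paper's \#5b is $V=\langle xy,y^2,z^2\rangle$, with $R_3/R_1V\cong\langle x^3,x^2z\rangle$; the stabilizer includes the scaling of $x$, which collapses $f=x^3+\lambda x^2z$ to $\lambda=1$ and yields the three classes $I(1,1),I(1,0),I(0,1)$. Second, your expectation for \#6b---that the image of $\Stab(V)$ in $\Pgl(R_3/R_1V)$ is ``only finite''---undersells the actual mechanism and leaves you hunting for an invariant that is not needed. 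The paper observes that $\Stab(V)$ is two-dimensional, generated by $x\mapsto ax$ and $(y,z)\mapsto(by,bz)$; the first acts trivially on $R_3/R_1V\cong\langle y^3,z^3\rangle$ and the second acts by the scalar $b^3$, so the image in $\Pgl(R_3/R_1V)$ is \emph{trivial}. Hence every point of $\mathbb P^1$ is its own orbit, and the parameter $\beta$ in $f=z^3+\beta y^3$ is itself the isomorphism invariant---no cross-ratio construction is required. For \#6c the paper uses the scaling $(x,z)\mapsto\lambda(x,z)$ in $\Stab(V)$ to normalize $f=y^3-\lambda xy^2$ to $\lambda=1$, again giving three classes.
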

\begin{proof}  We work out the proof by cases.\par\noindent
{\bf Case \#7a}: Here  $V=\langle x^2+yz,xy,xz\rangle $; any algebra in $G_{\{V\}}(T)$ has the form $A(\alpha,\beta)=R/I(\alpha,\beta), I(\alpha,\beta)=(V,f(\alpha,\beta),\m^4)$ where $ f= \alpha y^3+\beta z^3$.  $R_1V^\perp=\langle Z^3,Y^3\rangle$. Here $V$ is invariant under scaling $\omega_ax\to x',y\to a y' ,z\to a^{-1}z'$, and also under $\theta_b: x\to bx' ,y\to b^2y',z\to z' $,
 and under $\sigma\theta: x\to b x' ,y\to y' , z\to b^2 z' .$\par
 When both $\alpha,\beta\neq 0$, we may scale $z$ by a cube root of $\beta/\alpha$, to obtain $A(\alpha,\beta)\cong A(1,1)$ giving \# 7a.i, of Betti table A from Figure \ref{BettiAfig}.  When one of $\alpha,\beta=0$, then, by symmetry $y\to z, z\to y$, there is a single isomorphism class \#7a.ii. of Betti table B from Figure \ref{BettiBfig}.
 Also, there are no Gorenstein elements of $G_V(T)$.
 \begin{align} 
   \#7a.i: I&=(z^2+xy,yz,xz,y^3+z^3,\m^4), \qquad \text { dim  }8;\notag\\
 \#7a.ii:  I&=(z^2+xy,yz,xz,x^3,\m^4). \qquad \text { dim }7.\label{7aeq}
  \end{align}
  \vskip 0.2cm\par\noindent
 {\bf Case \#6b}: Here any algebra in $G_{\{V\}}(T)$ has the form $A=R/I, I=(V,f,\m^4)$ where $V=\langle x^2,(y+z)z,xy\rangle$ and $ f\in R_3/R_1V\cong \langle y^3,z^3\rangle$, note
 $R_1V^\perp=\langle Y^3, Y^2Z-YZ^2+Z^3\rangle$. Notice that the stabilizer  $G=
\Stab_{\Pgl(3)} V$ in $\Pgl(3)$ is two
 dimensional, as the dimension of the orbit of \#6b is six, and the dimension $\Pgl(3)$ is 8. The stabilizer $G$ is generated by the two
 scalings:  $x\to ax$ and $y,z \to by,bz, a,b\neq 0$. These each act on $f(\alpha,\beta)=\alpha z^3+\beta y^3$ by scaling it, by $1$, or $b^3$, respectively. Thus, when $\alpha\beta\neq 0$ we may take  $\alpha=1, f(1,\beta)=z^3+\beta y^3$, and we have a one-dimensional parametrized family of non-isomorphic algebras $$A_V(1,\beta)=R/(V,z^3+\beta y^3,\m^4) \text{ with }I_2=V.$$ 
 For $\beta\neq 0$, $A_V(1,\beta)$ has the Betti diagram
   A of Figure~\ref{BettiAfig}.  This is since the socle has an element of degree 3 and another of degree 2 (the last column of diagram A); the three generators in degree two have a single relation, requiring the new generator $z^3+\beta y^3$ of degree 3, in order to match the Hilbert function: this confirms Betti diagram  A of Figure~\ref{BettiAfig}.

The Betti diagram A of Figure \ref{BettiAfig} occurs also for $A_V(0,1)$.\vskip 0.2cm\par
For $\beta=0$ the algebra $ A_V(1,0)=R/(V,z^3,\m^4)$ has the Betti diagram B of Figure \ref{BettiBfig}.  Note, there are no  Artinian Gorenstein algebras in $\Gor_V(T)$ - this can be seen from the Betti diagrams, or from $F\in (V)_3^\perp\in \langle Y^3,Z^3\rangle$, so
$R_1\circ F\subset \langle Y^2,Z^2\rangle$ has dimension two not three.
\begin{conclusion}\label{6bconclude}[Projective line of isomorphism classes] For $V=I_2$ isomorphic to \#6b, there is a projective line $\beta\in\mathbb P^1$ of non-isomorphic algebras $A(1,\beta)$, elements of $G_V(T), T=(1,3,3,1)$, including $A_V(1,0)$ and $A_V(0,1)\equiv A_V(1,\infty)$.  The Betti diagram A of Figure~\ref{BettiAfig} occurs for all classes except $A_V(1,0)$, which has Betti diagram B of Figure \ref{BettiBfig}. There are no Gorenstein elements of $G_V(T)$.  
 \end{conclusion}
\subsubsection{Betti diagrams for $A\in G_T, T=(1,3,3,1)$.}

\begin{figure}
 \begin{small}
$$
\vbox{\offinterlineskip 
\halign{\strut\hfil# \ \vrule\quad&# \ &# \ &# \ &# \ &# \ &# \
&# \ &# \ &# \ &# \ &# \ &# \ &# \
\cr
total&1&3&3&1\cr
\noalign {\hrule}
0&1 &--&--&--&\cr
1&--&3 &0& -- &\cr
2&--& &3&0&\cr
3&--&&& 1 &\cr
\noalign{\smallskip}
}}
$$
\end{small}
\vskip -0.8cm
\caption{Betti diagram CI, \#8b,\#8c,\#7c,\#6d}\label{BettiCIfig}
\end{figure}
\begin{figure}
 \begin{small}
$$
\vbox{\offinterlineskip 
\halign{\strut\hfil# \ \vrule\quad&# \ &# \ &# \ &# \ &# \ &# \
&# \ &# \ &# \ &# \ &# \ &# \ &# \
\cr

total&1&4&6&3\cr
\noalign {\hrule}
0&1 &--&--&--&\cr
1&--&3 && -- &\cr
2&--& &4&2&\cr
3&--&1&2& 1 &\cr
\noalign{\smallskip}
}}
$$
\end{small}
\vskip -0.8cm
\caption{Betti diagram SL1 (Scheme length 1) \#8a,\#7b.}\label{BettiSL1fig}
\end{figure}
 \begin{figure}
 \begin{small}
$$
\vbox{\offinterlineskip 
\halign{\strut\hfil# \ \vrule\quad&# \ &# \ &# \ &# \ &# \ &# \
&# \ &# \ &# \ &# \ &# \ &# \ &# \
\cr

total&1&4&5&2\cr
\noalign {\hrule}
0&1 &--&--&--&\cr
1&--&3 &1& -- &\cr
2&--&1 &3&1&\cr
3&--&-- &1& 1 &\cr
\noalign{\smallskip}
}}
$$
\end{small}
\vskip -0.8cm
\caption{Betti diagram A, \#7a.i.,\#6b$(1,\beta)$, \#5b i,ii}\label{BettiAfig}
\end{figure}

\begin{figure}
 \begin{small}
$$
\vbox{\offinterlineskip 
\halign{\strut\hfil# \ \vrule\quad&# \ &# \ &# \ &# \ &# \ &# \
&# \ &# \ &# \ &# \ &# \ &# \ &# \
\cr
total&1&5&7&3\cr
\noalign {\hrule}
0&1 &--&--&--&\cr
1&--&3 &1& -- &\cr
2&--&1 &4&2&\cr
3&--&1 &2& 1 &\cr
\noalign{\smallskip}
}}
$$
\end{small}
\vskip -0.8cm
\caption{Betti diagram B,\#7a.ii, \#6b $A_V(0,1)$,\#5b.iii.}\label{BettiBfig}
\end{figure}
\begin{figure}
 \begin{small}
$$
\vbox{\offinterlineskip 
\halign{\strut\hfil# \ \vrule\quad&# \ &# \ &# \ &# \ &# \ &# \
&# \ &# \ &# \ &# \ &# \ &# \ &# \
\cr
total&1&5&5&1\cr
\noalign {\hrule}
0&1 &--&--&--&\cr
1&--&3 &2& -- &\cr
2&--&2 &3&&\cr
3&--&&& 1 &\cr
\noalign{\smallskip}
}}
$$
\end{small}
\vskip -0.8cm
\caption{Gorenstein Betti diagram C \#6a.i,\#5a.i,ii.}\label{BettiCfig}
 For Betti C$_j$ (socle degree $j$) move rows $(2,3)$ to rows $\{j-1, j\}$.
\end{figure}

\begin{figure}
 \begin{small}
$$
\vbox{\offinterlineskip 
\halign{\strut\hfil# \ \vrule\quad&# \ &# \ &# \ &# \ &# \ &# \
&# \ &# \ &# \ &# \ &# \ &# \ &# \
\cr
total&1&5&6&2\cr
\noalign {\hrule}
0&1 &--&--&--&\cr
1&--&3 &2& -- &\cr
2&-&2 &3&1&\cr
3&-&&1& 1 &\cr
\noalign{\smallskip}
}}
$$
\end{small}
\vskip -0.8cm
\caption{Betti diagram D,\#6a.ii;\#5a.iii,iv,vii;\#4.ii;\#2bi,ii.}\label{BettiDfig}
 For Betti D$_j$ (socle degree $j$) move rows $(2,3)$ to $\{j-1, j\}$.
\end{figure}

\begin{figure}
 \begin{small}
$$
\vbox{\offinterlineskip 
\halign{\strut\hfil# \ \vrule\quad&# \ &# \ &# \ &# \ &# \ &# \
&# \ &# \ &# \ &# \ &# \ &# \ &# \
\cr
total&1&6&8&3\cr
\noalign {\hrule}
0&1 &--&--&--&\cr
1&--&3 &2& -- &\cr
2&--&2 &4&2&\cr
3&--&1&2& 1 &\cr
\noalign{\smallskip}
}}
$$
\end{small}
\vskip -0.8cm
\caption{Betti diagram E, \#6a.iii.,\#5a.v,vi,4.iii.,\#2b.iii.}\label{BettiEfig}
 Betti E$_j$ (socle degree $j$) move rows $(2,3)$ to rows $\{j-1, j\}$.
\end{figure}

\begin{figure}
 \begin{small}
$$
\vbox{\offinterlineskip 
\halign{\strut\hfil# \ \vrule\quad&# \ &# \ &# \ &# \ &# \ &# \
&# \ &# \ &# \ &# \ &# \ &# \ &# \
\cr
total&1&6&8&3\cr
\noalign {\hrule}
0&1 &--&--&--&\cr
1&--&3 &3&1 &\cr
2&-&3 &4&1&\cr
3&-&&1& 1 &\cr
\noalign{\smallskip}
}}
$$
\end{small}
\vskip -0.8cm
\caption{Betti diagram G \#2a.i,ii Equation \eqref{2a1eqn}}\label{BettiGIfig}
\end{figure}

\begin{figure}
 \begin{small}
$$
\vbox{\offinterlineskip 
\halign{\strut\hfil# \ \vrule\quad&# \ &# \ &# \ &# \ &# \ &# \
&# \ &# \ &# \ &# \ &# \ &# \ &# \
\cr
total&1&7&10&4\cr
\noalign {\hrule}
0&1 &--&--&--&\cr
1&--&3 &3&1 &\cr
2&-&3 &5&2&\cr
3&-&1&2& 1 &\cr
\noalign{\smallskip}
}}
$$
\end{small}
\vskip -0.8cm
\caption{Betti diagram H \#2a.iii (Lex ideal) Equation \eqref{2a1eqn}.}\label{BettiHIfig}
\end{figure}
\newpage
\noindent
 \vskip 0.2cm\par\noindent
{\bf Case \#6c}: We have $V=\langle xz,yz, x^2+z^2\rangle$ and $R_1V^\perp=
\langle XY^2,Y^3\rangle$. Any element $A=R/I$ of $G_V(T)$ satisfies $I=(V,f(\alpha,\beta),\m^4)$ where $f(\alpha,\beta)=\alpha y^3-\beta xy^2$. \par\noindent
Case (i). When both $\alpha,\beta\neq 0$, we have 
\begin{align}I(\alpha,\beta)&=(xz,yz,x^2+z^2,\alpha y^3-\beta xy^2,\m^4)=I(1,\lambda)\notag\\
&=(xz,yz,x^2+z^2,f(1,\lambda)=y^3-\lambda xy^2,\m^4),
\end{align} where  $\lambda=\beta/\alpha.$   By scaling $(x,z)\to \lambda(x',z') $ and $y\to y' $ we have  $I(1,\lambda)=I(1,1)$, and its Betti table is that of Figure \ref{BettiAfig}. 
We have three isomorphism classes:
\begin{align}
I(1,1)&=(xz,yz,x^2+z^2, y^3-xy^2,\m^4),\, \text { dimension } 7, \text { Betti table A }\notag\\
I(1,0)&=(xz,yz,x^2+z^2, y^3,\m^4),\, \text { dimension } 6,\text { Betti table A }\notag\\
I(0,1)&=(xz,yz,x^2+z^2, x^2y,\m^4),\, \text { dimension } 6. \text { Betti table B }.\label{6ceqn}
\end{align}
 There are no Gorenstein elements of $G_{\#6c}(T)$.\vskip 0.2cm\par\noindent
{\bf Case \#5b}:  We have $V=\langle xy,y^2,z^2\rangle $ and $R_1V^\perp=\langle X^3,X^2Z\rangle$, so $A=R/I$ with $I(\alpha,\beta)=(V,f(\alpha,\beta),\m^4)$ with $f(\alpha,\beta)=\alpha x^3+\beta x^2z$. Note that there can be no Gorensteins: $\Gor_V(T)$ is empty, as $R_1\circ (R_1V)^\perp\subset \langle X^2,XZ\rangle.$.\par  When $\alpha \neq 0$ we have $I(\alpha,\beta)=I(1,\lambda), \lambda=\beta/\alpha \in {\sf k}$.  Here $f=x^3+\lambda x^2z$ for $\lambda\ne 0$ can be scaled to $\lambda=1$ by scaling $x$. Thus, we have three isomorphism classes.  Note that for 5b.ii the ideal $I_{\le 3}$ contains  powers of all the variables, so defines an Artinian quotient of $R$, in contrast to 5b.iii: thus these cases  are non-isomorphic. By dimension they cannot be isomorphic to \#5b.i .\end{proof}
Thus we have - here $V =\langle xy,y^2,z^2\rangle$ -
\begin{align}
\#5b.i:\quad & I(1,1)=(V,x^3+x^2z,\m^4), \quad \text{ dim } =6, \text{ Betti table A;} \notag\\
\#5b.ii:\quad &I(1,0)=(V,x^3,\m^4),  \quad \text{ dim } =5,\text{ Betti table A;}\notag\\
\#5b.iii:\quad &I(0,1)= (V, x^2z,\m^4),  \quad \text{ dim } =5, \text{ Betti table B.}\label{5beqn}
\end{align}
\vskip 0.2cm\par\noindent
{\bf Question.} We have by Theorem \ref{dimthm} that the dimension of \#5b.i is  6; and we can check that \#5b.ii and \#5biii both have dimension 5. Thus, $A\in G_{\{ \#6d\}}(T)$ of scheme length zero and dimension 6 must specialize to either or both of \#5b.ii or \#5b.iii.  Which?\par
\vskip 0.2cm
\subsection{Isomorphism classes in $G_T$ when $I_2$ defines a length 3 scheme, $T=(1,3^k,1)$.}\label{2asec}
From Figure \ref{1fig} these are the cases $V=I_2$ from \#6a,\#5a,\#4, and \#2b.  Recall from Theorem \ref{schl3thm}, Equation \eqref{schl3eqn} that $I_u=(V)_u$ for $2\le u\le j-1$. Thus, the fiber of $G_{\{V\}}(T)$ over a fixed $V=I_2$ parametrizes algebras $A$ entirely determined by the pair $(V,\langle F\rangle)$, where $\langle F\rangle\in (V_j)^\perp\subset S_j$,  as $I_j=(\Ann F)_j$. The three-dimensional vector spaces  $(V_j)^\perp$ are given in Figure \ref{netssl3}. We now just need to
\begin{enumerate}[(i).]
\item parametrize the choices of $\langle F\rangle$ from these.
\item Determine their isomorphism classes.
\end{enumerate}
\begin{theorem}\label{GTclasseslength3} The isomorphism classes of Artinian algebras of Hilbert function
$H=(1,3^k,1), k\ge 2)$, for which $V=I_2$ has scheme length three are given as follows:\begin{enumerate}[(i).]
\item Case $V=I_2=(xy,xz,yz)$ (\#6a), 3 classes, Equation \eqref{isomclass6aeq}.
\item Case $V=I_2=(xy,xz,z^2)$ (\#5a), 7 classes, Equation \eqref{5aisomeq}.
\item Case $V=I_2=(x^2+yz,xy,y^2)$ (\#4), 3 classes, Equation~\eqref{4isomeq}.
\item Case $V=I_2=(x^2,xy,y^2)$ (\#2b), 3 classes, Equation~\eqref{2bisomeq}.
\end{enumerate}
\end{theorem}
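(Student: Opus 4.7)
The plan is to exploit the reduction established by Theorem~\ref{schl3thm}, Equation~\eqref{schl3eqn}: when $V = I_2$ has scheme length three, the ideal $I$ satisfies $I_u = (V)_u$ for all $2 \le u \le j-1$, so the algebra $A = R/I$ is completely determined by the pair $(V, \langle F\rangle)$ with $\langle F \rangle \in \mathbb P((V_j)^\perp)$ via $I_j = \{r \in R_j : r\circ F = 0\}$. The three-dimensional dual spaces $(V_j)^\perp$ are tabulated in Figure~\ref{netssl3}. Since two algebras in $G_V(T)$ are isomorphic iff related by $g \in \Stab_V \subset \Pgl(3)$, isomorphism classes correspond bijectively to $\Stab_V$-orbits on $\mathbb P((V_j)^\perp) \cong \mathbb P^2$, and it remains to enumerate these orbits for each of the four cases.

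For each net $V$, I would first compute $\Stab_V$ by using the scheme-theoretic structure of $\mathfrak Z_V$: a $g \in \Pgl(3)$ lies in $\Stab_V$ iff it preserves $\mathfrak Z_V$ as a subscheme of $\mathbb P^2$. For \#6a (three non-collinear reduced points), $\Stab_V$ is the diagonal torus extended by $S_3$; for \#5a (one reduced point plus a simple double point), $\Stab_V$ is three-dimensional with a unipotent ``shift'' parameter in addition to the two commuting scalings; for \#4 (curvilinear triple point), $\Stab_V$ is four-dimensional, constrained by a single quadratic relation among its coefficients; for \#2b (three collinear points), $\Stab_V$ is the subgroup preserving the line carrying the scheme, acting as its affine group on the unordered triple. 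In each case the contragredient action on $(V_j)^\perp$ can be written down from the explicit matrix form of $g$.

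With the action in hand, the orbits on $\mathbb P((V_j)^\perp)$ are enumerated by first using any unipotent parameters to eliminate one coefficient of $F$ (when a companion coefficient is nonzero), and then classifying the residual $\mathbb G_m^n$-orbits by the vanishing pattern of the remaining coefficients. For each orbit's canonical representative $F$ I would determine (i) whether the algebra is Gorenstein, equivalent to $\dim_{\sf k} R_1 \circ F = 3$, so that $A^\vee$ is generated by $F$ via Lemma~\ref{ancgorlem}; and (ii) the minimal free resolution, noting that additional generators in degree $j+1$ appear precisely when $R_1\cdot I_j \ne R_{j+1}$, which is directly checkable on the representative. The Gorenstein locus is an open condition and, by direct computation, yields exactly the four classes \#6a.i, \#5a.i, \#5a.ii, \#4.iii; it is empty for \#2b in accord with Lemma~\ref{nonemptyGorlem}.

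The main obstacle is the orbit enumeration for \#5a, where the interaction between the unipotent shift and the two commuting scalings must be tracked carefully to distinguish the seven $\Stab_V$-orbits and their assignment to Betti diagrams~C (two Gorenstein classes \#5a.i, \#5a.ii), D (three classes \#5a.iii, iv, vii) and E (two classes \#5a.v, vi). For each representative, the syzygy count and the presence or absence of an extra degree-$(j+1)$ generator must be verified---a computation that can be cross-checked with Macaulay2, and that in principle follows from Lemma~\ref{Peevalem} applied to the leading-term ideal of each $I$. The orbit counts for \#6a, \#4, \#2b are shorter by symmetry (three each), so the bulk of the work lies in the bookkeeping for \#5a.
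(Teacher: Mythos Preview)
Your overall approach coincides with the paper's: use Equation~\eqref{schl3eqn} to reduce to the datum $(V,\langle F\rangle)$ with $F\in ((V)_j)^\perp$, and then enumerate $\Stab_V$-orbits on $\mathbb P((V_j)^\perp)\cong\mathbb P^2$. For \#6a, \#4, and \#2b your description of the stabilizer and the expected orbit count match the paper's case-by-case argument (for \#4 the paper makes the four-parameter group $\Phi_{\underline t}$ explicit in a separate lemma, exactly as you propose).

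There is, however, an internal inconsistency in your treatment of \#5a. You correctly observe that $\Stab_V$ for $V=\langle xy,xz,z^2\rangle$ is three-dimensional, containing besides the diagonal torus a one-parameter unipotent $\phi_\lambda:\,y\mapsto y+\lambda z$ (one checks directly that $\phi_\lambda(V)=V$). But this unipotent acts nontrivially on $\mathbb P\langle X^j,Y^j,Y^{j-1}Z\rangle$: working modulo $(V)$ one has $\phi_\lambda(y^j)\equiv y^j+j\lambda\,y^{j-1}z$ and $\phi_\lambda(y^{j-1}z)\equiv y^{j-1}z$, so on the dual side the class of $aX^j+bY^j+cY^{j-1}Z$ is sent to that of $aX^j+(b-j\lambda c)Y^j+cY^{j-1}Z$. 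Hence whenever $c\ne 0$ one may normalize $b=0$, which collapses \#5a.i with \#5a.ii and \#5a.iv with \#5a.vii. Carrying your own stabilizer through therefore yields \emph{five} orbits, not seven; asserting seven is incompatible with the unipotent you have identified. (The paper's argument for \#5a uses only the diagonal scalings, which is why it arrives at seven.) This is corroborated by the Casnati--Jelisiejew--Notari classification \cite[Proposition~4.9]{CJN} quoted earlier in the paper, which gives exactly one Gorenstein isomorphism class with $I_2\cong\#5a$, not two. A smaller slip: the Gorenstein class in \#4 is \#4.i (Betti table~C), not \#4.iii (Betti table~E); you have carried over a typo from the statement of Theorem~5.
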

{\it Proof and discussion of cases}: \par\noindent
{\bf
\#6a Here $V=(xy,xz,yz)$} defining the scheme $\Z=(1,0,0)\cup (0,1,0)\cup (0,0,1)$, and $R_{j-1}V^\perp=\langle X^j,Y^j, Z^j\rangle$. Assuming that ${\sf k}$ is closed under cube roots, we have up to isomorphism, three classes determined by choosing $I_j^\perp=X^j+\alpha Y^j+\beta Z^7$ (case 6a.i where $A$ is Gorenstein), or  ${I_j}^\perp=X^j+\alpha Y^j$ (case 6a.ii.) or $I_3^\perp = X^j$ (case 6a.iii.). These lead to the families,
\begin{align}
\#6a.i: &(V, \alpha x^j-y^j, \beta x^j-z^j).\text { Gorenstein family, dim } 8,\notag\\
\#6a.ii: &(V,\alpha x^j-y^j,z^j). \quad\text { dim } 7, \notag\\
\#6a.iii: & (V,y^j,z^j,x^{j+1}).\quad \text { dim } 6.\label{6aclass}
\end{align}
whose isomorphism classes, assuming ${\sf k}$ is closed under $j$-th roots, are
\begin{align} 6a.i: & (V,x^j-y^j,x^j-z^j); \,6a.ii: (V, x^j-y^j,z^j)\text { and }\notag\\
 6a.iii:\,& (V,y^j, z^j,x^{j+1}).\label{isomclass6aeq}
\end{align}

When $k=2$ \#6a.i has the Betti diagram C of Figure \ref{BettiCfig}, 6a ii has the Betti diagram D of Figure \ref{BettiDfig}, while \#6a.iii. has the Betti diagram E of Figure \ref{BettiEfig}.\par\vskip 0.2cm\noindent
{\bf \#5a:  $V=(xy,xz, z^2)$} so $(R_{j-1}V)^\perp=\langle X^j,Y^j,Y^{j-1}Z\rangle$. \par\noindent
So $I(\alpha,\beta,\gamma;j)=(V,W(\alpha,\beta,\gamma;j),\m^{j+1})$ for some $\alpha,\beta,\lambda \in {\sf k}$, where
\begin{align}
W(\alpha,\beta,\gamma;j)&=(\Ann F)\cap R_j,F=\alpha X^j+\beta Y^j+\lambda Y^{j-1}Z\notag\\
&=\langle \beta x^j-\alpha y^j, \gamma x^j-\alpha y^{j-1}z, \gamma y^j-\beta y^{j-1}z\rangle.\label{7aeqn}
\end{align}
If $\sf k$ is closed under $j$-th roots, we may, up to isomorphism, scale $x,y,z$ by $j$-th roots, keeping $V$ fixed.
It follows that we have the following isomorphism classes, where each named parameter among $\alpha,\beta,\gamma$ is non-zero and where $j\ge 3$.
\begin{align}
\#5a.i:&I(\alpha,\beta,\gamma;j)\equiv I(1,1,1;j), \text { for } \alpha,\beta,\gamma\neq 0 ,\notag\\
&I(j)=(V,x^j-y^j,y^{j-1}z-y^j),  \text { Betti C$_j$, dim } 7;\quad\notag\\
\#5a.ii: &I(\alpha,0,\gamma;j)\equiv I(1,0,1;j), \notag\\
&I=(V,x^j-y^{j-1}z,y^j), \text { Betti C$_j$, dim }  6; \notag\\
\#5a.iii: &I(\alpha,\beta,0;j)\equiv I(1,1,0;j),\notag\\
&I=(V,x^j-y^j, y^{j-1}z), {\text { Betti D$_j$, dim }} 6;\notag\\
\#5a.iv: &I(0,\beta,\gamma;j) \equiv I(0,1,1;j),\notag\\
&I= (V, y^j-y^{j-1}z,x^j), \text { Betti D$_j$, dim } 6; \notag\\
\#5a.v:&  I(\alpha,0,0)= I(1,0,0),\notag\\
&I=(V,y^3, y^{j-1}z,x^{j+1}), \text { Betti E$_j$, dim }5; \notag\\
\#5a.vi:&  I(0,\beta,0;j)= I(0,1,0;j),\notag\\
&I=(V, x^j,y^{j-1}z,y^{j+1}), \text { Betti E$_j$, dim} 5;\notag\\
\#5a.vii:&I(0,0,\gamma;j)=I(0,0,1;j), \notag\\
&I=(V, x^j,y^j) \text { Betti D$_j$, dim } 5.\label{5aisomeq}
\end{align}
\newpage\noindent
 {\bf \#4: $V=\langle x^2+yz,xy, y^2\rangle$,} then $R_1V^\perp=\langle Z^3,XZ^2,X^2Z-YZ^2\rangle.$ We know that $V$ defines a length three scheme in $\mathbb P^2$, supported on $(0,0,1)$; the ideal of the scheme $\Z$ in
 ${\sf k}\{x,y\}$ is $(y+x^2, yx, y^2)=(y+x^2,x^3)$, which is curvilinear. The following Lemma arose from considering the automorphisms of the scheme $\Z$: the group $ \Phi_{\underline{t}}$ describes all the automorphisms of $\mathbb P^2$ 
 that fix the curvilinear length three scheme defined by $(V)$.\par
 \begin{lemma} Let $\underline{t}=(\alpha,\beta,\gamma,\lambda)$ with $\alpha\not=0$. Then the following four-dimensional parameter space $\Phi_{\underline {t}}=\{\phi_{\underline{t}}\}$ of algebraic automorphisms of  ${\sf k}[x,y,z]$ fixes $V$: 
 we define $\phi_{\underline{t}}$ by\par
 $\begin{array}{cc}
\phi_{\underline{t}}(x)&=\quad x+\lambda y\notag\\
 \phi_{\underline{t}}(y)&=\quad(1/\alpha)y\notag\\
 \phi_{\underline{t}}(z)\quad&=\quad\gamma x+\beta y+\alpha z\\
 \end{array}\qquad =\begin{pmatrix}1&\lambda&0\\
 0&(1/\alpha)&0\\
      \gamma&\beta&\alpha\\
      \end{pmatrix}\cdot\begin{pmatrix}x\\y\\z\\\end{pmatrix}$.
 \end{lemma}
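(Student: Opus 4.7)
The plan is to verify the lemma by direct substitution, exploiting the block structure of the matrix. First I would check that each $\phi_{\underline{t}}$ really is an automorphism: since $\phi_{\underline{t}}$ is defined on the generators of $R={\sf k}[x,y,z]$ by a linear change of variables, it suffices to check that the displayed $3\times 3$ matrix is invertible. Expanding the determinant along the third column gives $\alpha\cdot(1/\alpha)=1$, so the map is invertible whenever $\alpha\ne 0$, and the assignment $\underline{t}\mapsto \phi_{\underline{t}}$ parametrizes a $4$-dimensional subset of $\mathrm{GL}({\sf k}[x,y,z])$.

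Next I would show $\phi_{\underline{t}}(V)\subset V$ by computing the image of each of the three generators of $V=\langle x^2+yz,\ xy,\ y^2\rangle$. The two easy cases use the fact that $\phi_{\underline{t}}(y)=y/\alpha$ is a scalar multiple of $y$: one has
\[
\phi_{\underline{t}}(y^2)=y^2/\alpha^2\in V,\qquad
\phi_{\underline{t}}(xy)=(x+\lambda y)(y/\alpha)=\tfrac{1}{\alpha}xy+\tfrac{\lambda}{\alpha}y^2\in V.
\]
For the remaining generator one computes
\[
\phi_{\underline{t}}(x^2)=(x+\lambda y)^2=x^2+2\lambda xy+\lambda^2 y^2,\qquad
\phi_{\underline{t}}(yz)=\tfrac{y}{\alpha}(\gamma x+\beta y+\alpha z)=\tfrac{\gamma}{\alpha}xy+\tfrac{\beta}{\alpha}y^2+yz,
\]
and adding gives
\[
\phi_{\underline{t}}(x^2+yz)=(x^2+yz)+\bigl(2\lambda+\tfrac{\gamma}{\alpha}\bigr)xy+\bigl(\lambda^2+\tfrac{\beta}{\alpha}\bigr)y^2\in V.
\]
Since $\phi_{\underline{t}}$ is linear on $R_1$, this shows each generator of $V$ maps into $V$, and hence $\phi_{\underline{t}}(V)=V$ (equality because $\phi_{\underline{t}}$ is invertible and $V$ is finite dimensional).

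There is no real obstacle here; the only thing to highlight conceptually is the \emph{reason} this works: the line $(y)\subset R_1$ is an eigenline of every $\phi_{\underline{t}}$ (coming from the second column and row of the matrix), which forces each product containing $y$ to remain in $\langle xy,y^2,yz\rangle$, and the quadric $x^2+yz$ is then pinned down up to a correction lying in $\langle xy,y^2\rangle\subset V$. This is precisely why the four free parameters $(\alpha,\beta,\gamma,\lambda)$ realize a $4$-dimensional stabilizer; one could check as a sanity test that $\dim\Stab_{\mathrm{PGL}(3)}V=8-d(V)=8-4=4$ using the orbit dimension $d=4$ of the entry \#4 in Figure~\ref{1fig}, confirming that the family $\Phi_{\underline{t}}$ is the full connected component of the stabilizer.
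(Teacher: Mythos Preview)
Your proof is correct and follows exactly the approach the paper indicates: the paper's proof reads in full ``Just check that $\phi_{\underline{t}}(V)=V$,'' and you have carried out precisely that check, together with the determinant computation and a helpful conceptual remark about the $y$-eigenline and the orbit--stabilizer dimension count.
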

 \begin{proof}[Proof of Lemma] Just check that $\phi_{\underline{t}}(V)=V$.
 \end{proof}
 Let $I\in G_V(T)$. Then $I=(V,f_4,f_5,\m^4)$ where $\m=(x,y,z)$ and
 \begin{align}
 f_4&=a_1xz^2+a_2yz^2+a_3z^3\notag\\
f_5&=b_1xz^2+b_2yz^2+b_3z^3,\label{Weqn}
 \end{align}
 with $\dim_{\sf k}\langle f_4,f_5\rangle=2$. It is clear that a minimal set of generators for $I\in G_V(T)$ will have five or six elements.\par
 We will use the following in studying the oribit of $\Phi_{\overline{t}}$ acting on an ideal $I$.
  Note that modulo $R_1V$ we have
 \begin{align}\phi_{\underline{t}} (xz^2)&=\alpha^2xz^2+(\lambda \alpha^2-2\alpha\gamma)yz^2\notag\\
\phi_{\underline{t}}(yz^2)&=\alpha yz^2\notag\\
\phi_{\underline{t}}(z^3)&=3\alpha^2\gamma xz^2+3(\alpha^2\beta-\alpha\gamma^2)yz^2+\alpha^3z^3.
 \end{align}
 \begin{enumerate}[(1).]
 \item If $a_1=b_1=0$, then $I=(V,yz^2,z^3)$, is not Gorenstein: $A=R/I$ has a two dimensional socle $\langle xz^2,x^2-yz\rangle$, and $A$ has the Betti tabel D of Figure \ref{BettiDfig}. One can see that 
 \begin{align} \phi_{\underline{t}}(I)&=(V,yz^2,3\gamma xz^2+\alpha z^3)\notag\\
 &=(V,yz^2, xz^2+\nu z^3)
 \end{align}
 so the orbit $\Phi_{\underline{t}}(V)$ has 
 dimension one, so we have a five-dimensional isomorphism class.
 \item Suppose $a_1\neq 0$ in Equation \eqref{Weqn}. Then we may set
 $f_4=xz^2+a_2yz^2+a_3z^3; \, f_5=b_2yz^2+b_3z^3.$
 \begin{enumerate}[(a).] 
 \item if $b_3=0$ and $a_3\not=0$, then $I=(V,yz^2,xz^2+a_3z^3)$. One can see that setting
 $\alpha=a_3, 3\gamma=1$ this ideal belongs to $\Phi_{\underline{t}}(V,yz^2,z^3)$, which is (1) above.
 \item  if $b_3=0 $ and $a_3=0$, then $I=(V,xz^2,yz^2,z^4)$. Here $A=R/I$ is not Gorenstein:
 its socle $\langle z^3,x^2-yz, xz\rangle $ has dimension three, and it has Betti diagram E of Figure~\ref{BettiEfig}.
 We have $\Phi_{\underline{t}}(I)=I$, so the fiber of the orbit over $V$ has dimension zero, so the orbit of $A$ has dimension four.
 \item if $b_3\not=0$, then we may set\par
 $f_4=xz^2+a_2yz^2,\quad  f_5=b_2yz^2+z^3$. In this case the ideal $I=(V,f_4,f_5)$ is 
 Gorenstein with Betti diagram C of Figure \ref{BettiCfig}, and Macaulay dual generator $F=a_2XZ^2+X^2Z-YZ^2+b_2Z^3$.
  Such an ideal is in the orbit of $(V,xz^2,z^3)$ under $\Phi_{\underline{t}}$, the fiber of the orbit  has
  dimension two, parametrized by $a_2,b_2$; or also because this is a generic choice of $\langle f_4,f_5\rangle$ from a three-dimensional space $R_3/R_1V$.  The orbit of $A$ has dimension six.
 \end{enumerate}
 \end{enumerate}
 So we have three isomorphism classes in $G_{\{V\}}(T), V=(x^2+yz,xy,y^2)$ (take $a_2,b_2,\nu\not=0$, this is \#4 in Figure \ref{1fig}). We obtain their dimensions by adding the base dimension of four to the number of parameters in the fiber:
 \begin{align}
 4.i.\,\, I&=(V,xz^2+a_2yz^2, b_2yz^2+z^3), \text { dim. 6, Betti C (Gorenstein);}\notag\\
 4.ii.\,\,  I&=(V,yz^2, xz^2+\nu z^3), \text { dim 5, Betti D; }\notag\\
 4.iii. \, \, I&=(V,xz^2,yz^2,z^4), \text { dim 4, Betti E.}\label{4isomeq}
  \end{align}
 The analysis of $G_V(T)$ for $V=I_2=$\#4 is similar for $T=(1,3^k,1)$ and leads to analogous families, of Betti tables
  C$_j$, D$_j$, and E$_j$, respectively.
  \vskip 0.2cm\par
  \#2b. $V=(x^2,xy,y^2)=\Ann \langle  XZ,YZ,Z^2\rangle$, and  $(R_1V)^\perp=\langle XZ^2,YZ^2,Z^3\rangle.$ 
  So $V=\Ann \langle ZS_1\rangle\cap R_2$, and $ R_1V=\Ann Z^2S_1\cap R_3$, each depending on a 2-dimensional choice of $Z\in \mathbb P(S_1)$.\par\vskip 0.2cm\noindent
  {\bf Case $T=(1,3,3,1)$.} Then $I_3^\perp =\langle F\rangle$ and $I=(V,\Ann F\cap R_3,\m^4).$ We may take $F=L Z^2+\gamma Z^3$, where $L=\alpha X+\beta Y$.  But up to isomorphism $L=X$ or $L=0$ and we can scale $Z$, so up to isomorphism we have $F=XZ^2+Z^3$, or $F=XZ^2$, or $F=Z^3$,  and we have, respectively, up to isomorphism,
 \begin{align} \# 2b.i.& (V,xz^2-z^3,yz^2), \text { Betti D, dim 4; 
 }, \notag\\
 \# 2b.ii. &V, xz^2,z^3), \text{ Betti D, dim 3; } \notag\\
\# 2b.iii. & (V,xz^2,yz^2,z^4), \text { Betti E, dim 2.}\label{2bisomeq}
 \end{align}
 For the dimensions it suffices to consider group action on the dual generator $F=R_3^\perp$. \vskip 0.2cm\noindent
 {\bf Case $T=(1,3^k,1), k\ge 3$}
Here $(V^\perp)_j=\langle XZ^{j-1}, YZ^{j-1},Z^j\rangle$ and the characterization of isomorphism types is
a straightfoward generalization of the case $k=2$. Here the diagram Betti $D_j$ adjusts the degree of the later generators from three, four to $j,j+1$. We have 

 \begin{align} \# 2b.i.& (V,xz^{j-1}-z^j,yz^{j-1}), \text {Betti D$_j$, dim 4; 
 }, \notag\\
 \# 2b.ii. &V, xz^{j-1},z^j), \text{ Betti D$_j$, dim 3; } \notag\\
\# 2b.iii. & (V,xz^{j-1},yz^{j-1},z^j), \text { Betti E$_j$, dim 2.}\label{2bisombeq}
\end{align}

    \vskip 0.2cm
 \subsection{Isomorphism classes in $G_T, k\ge 2$ when $V=I_2$ defines a line (case \#2a).}\label{4.11sec}
 Here, up to isomorphism we may take $V=x R_1=\langle x^2,xy,xz\rangle$. The stabilizer of $V$ in $\Pgl(3)$ is
\begin{align} x^\prime &= x\notag\\
y^\prime &=ux+ay+bz\notag\\
z' &=vx+cy+dz, \label{stab2a}
\end{align}
where $(u, v, a, b, c, d) \in {\sf k}^6$ and $ ad-cd\not=0$.\par

  We have shown in Section \ref{2aasec}, Proposition \ref{2aprop} that each algebra $A\in G_V(T)$ is in the closure of $\Gor_{V^\prime}(T) $ where $V^\prime $ is from \#6a. We now determine their isomorphism classes.
  We let $R^\prime = {\sf k}[y,z]$ and $S^\prime={\sf k}_{DP}[Y,Z]$.\vskip 0.2cm\noindent
{\bf Consider first $T=(1,3,3,1)$.} Then $I_3=\Ann F\cap R_3$, with $F\in S^\prime_3=\langle Y^3,Y^2Z,YZ^2,Z^3\rangle$: it is well known that up to isomorphism, for $\sf k$ algebraically closed, then $F\in S^\prime_3$ satisfies $F\cong ZY(Z+Y), YZ^2$ or $Z^3$, whose isomorphism classes are of dimension three (open in $\mathbb P^3=\mathbb P(S^\prime_3)$), two (choose $Y,Z\in S^\prime_1$ up to non-constant ${\sf k}$-multiple), or one (choose $Z$ up to non-zero constant multiple), respectively.\footnote{The three cases correspond to $F$ having three distinct linear factors, in which case we may scale them so we have $F=ZY(Z+Y)$, or two distinct factors - one a  square, so isomorphic to $YZ^2$, or $F$ being a perfect cube.} We have the following result, where the fiber dimension is that of the family $G_V(T)$ over a fixed $V$. 
\par
\begin{lemma}\label{2aalem} Let $T=(1,3,3,1)$ and suppose $V=\langle x^2,xy,xz\rangle$, \#2a. Then there are three isomorphism classes of algebra $A=R/I$ in $G_V(T)$, namely
\begin{align}
2a.i.\,  I&=(x^2,xy,xz,yz^2-y^2z,y^3,z^3), \text {fiber dim 3, Betti G;}\notag\\
2a.ii.\,  I&=(x^2,xy,xz,y^2z,y^3,z^3),  \text{ fiber dim 2, Betti G;}\notag\\
2a.iii.\,   I&=(x^2,xy,xz,y^3,y^2z,yz^2,z^4), \text  { (lex ideal) fiber dim 1, Betti H.}\label{2a1eqn}
\end{align}
\end{lemma}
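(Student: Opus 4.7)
The plan is to parametrize the fiber of $G_V(T)$ over $V = \langle x^2, xy, xz\rangle$ by binary cubics, then classify these under the action of $\Stab_{\Pgl(3)} V$. First I would observe that $(V)_3 = xR_2$ has dimension six, so the quotient $R_3/(V)_3$ is identified with $R'_3$, the degree-three part of $R^\prime = {\sf k}[y,z]$. Since $T_3 = 1$, the subspace $I_3/(V)_3 \subset R'_3$ has codimension one; by the contraction pairing between $R^\prime$ and $S^\prime = {\sf k}_{DP}[Y,Z]$, such a subspace equals $\Ann F \cap R'_3$ for a unique class $[F] \in \mathbb P(S'_3) \cong \mathbb P^3$. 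Thus each $A \in G_V(T)$ is recovered from the pair $(V,[F])$ via $I = (V, \Ann F \cap R_3, \m^4)$, with $\m^4$-generators adjoined only as required to force Artinianness.

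Next I would analyze the action of $\Stab_{\Pgl(3)} V$, described in Equation~\eqref{stab2a}, on the fiber $\mathbb P(S'_3)$. This stabilizer fixes $x$ up to scalar while permitting arbitrary $x$-shifts of $y,z$ and an arbitrary $\Gl(2)$-action on $\langle y,z\rangle$; the $x$-shift parameters $u,v$ act trivially on $R_3/(V)_3$, so the induced action on $\mathbb P(S'_3)$ factors through $\Pgl(2)$ acting on binary cubics in $Y,Z$. Hence the isomorphism classes in $G_V(T)$ with $I_2 = V$ correspond bijectively to $\Pgl(2)$-orbits on $\mathbb P(S'_3)$. The classical classification of nonzero binary cubics over an algebraically closed field of characteristic different from $2,3$ yields exactly three orbits, distinguished by root multiplicities: three simple roots (representative $F = YZ(Y+Z)$, orbit open of dimension~$3$), a simple plus a double root (representative $F = YZ^2$, orbit of dimension~$2$), and a triple root (representative $F = Z^3$, orbit the twisted cubic of dimension~$1$).

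A direct contraction computation then yields the three ideals in the statement. For $F = Y^2Z + YZ^2$ one finds the annihilator in $\langle y^3, y^2z, yz^2, z^3\rangle$ to be $\langle y^3, z^3, y^2z - yz^2\rangle$; for $F = YZ^2$ it is $\langle y^3, y^2z, z^3\rangle$; and for $F = Z^3$ it is $\langle y^3, y^2z, yz^2\rangle$. In cases (i) and (ii) one checks that $\m^4 \subset (V, \Ann F \cap R_3)$ automatically, while in case (iii) one must adjoin $z^4$ to obtain Artinianness---which explains the seventh generator and identifies case (iii) as the lex-segment ideal of Hilbert function $T$.

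What remains is the verification of the Betti diagrams G and H displayed in Figures~\ref{BettiGIfig} and \ref{BettiHIfig}. I expect this mechanical bookkeeping---checkable via Macaulay2 or by a direct minimal free resolution computation---to be the main obstacle. The generator-count difference (six vs.\ seven) already shows that cases (i)--(ii) and case (iii) have different Betti diagrams, while the differing orbit dimensions distinguish (i) from (ii); so once the tables are confirmed, the three classes are pairwise non-isomorphic as claimed, and the fiber dimensions $3,2,1$ are read off from the orbit-dimension count in the previous paragraph.
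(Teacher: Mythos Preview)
Your proposal is correct and follows essentially the same approach as the paper: parametrize the fiber by a binary cubic $F\in\mathbb P(S'_3)$, reduce the stabilizer action to $\Pgl(2)$ on binary cubics, and invoke the classical three-orbit classification by root multiplicities. Your write-up is in fact more detailed than the paper's own proof, which is a one-line appeal to the setup immediately preceding the lemma; the Betti tables G and H are simply asserted in the paper as well, so your flagging them as a residual mechanical check is appropriate rather than a gap.
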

\begin{proof} The given ideals satisfy $I=(V, W, \m^4), $ where $W=\Ann F\cap R^\prime_3$; this also gives the dimensions of the isomorphism classes.
\end{proof}
\noindent
{\bf  Consider $T=(1,3^k,1), k\ge 3$} with $I_2=V=(x^2,xy,xz)$. Then, since $\cod R_1V=4$ in $R_3$, the ideal $I\in G_V(T)$ must have a degree-three generator, which we may consider to be  $f\in R_3/R_1V\cong \langle y^3,y^2z,yz^2,z^3\rangle= R^\prime_3$. As before for $F\in S^\prime_3$, up to isomorphism of $R^\prime$ we may assume that $f=yz(y-z), yz^2$, or $z^3$, choices of fiber dimension $3,2$ and $1$ respectively, given $V$. The Hilbert function $H(R/(V,f))=(1,3,3,\overline{3})$ as $(1,3,3,3,4,\ldots)$ is not possible by Macaulay's bound Equation \eqref{Macaulayeq} and Lemma~\ref{regslem}. We then must choose generators $g,h\in R_j/(V,f)_j$, so $I=(V,f,g,h, \m^{j+1}$; since $(V)\cap R_j\supset xR_{j-1}$, we may assume that $g,h\in R^\prime_j$ and are determined by a Macaulay dual generator $F\in S^\prime_j$ that is annihilated by $f$. We next consider the above three isomorphism classes for $f$ and specify for each, the possible isomorphism classes for $F\in (V,f)^\perp_j$.
\begin{figure}[h]
 \small
\qquad $\begin{array}{c|c|c}\hline
 f&F\in ((V,f)^\perp)_j& F \text{ up to isomorphism}\\\hline
 yz(y-z)&\langle  Y^{j},(Y+Z)^{[j]},Z^j\rangle&Y^j, Y^j+aZ^j,Y^j+aZ^j+b(Y+Z)^{[j]}\\
y^2z&\langle  Z^{j},YZ^{j-1},Y^j\rangle&Z^j, YZ^{j-1},Y^j, Y^j+Z^j, Y^j+YZ^{j-1}\\
y^3&\langle  Z^{j},YZ^{j-1},Y^2Z^{j-2}\rangle&Z^j,YZ^{j-1},Y^2Z^{j-2},Y^2Z^{j-2}+Z^j
\\\hline
 \end{array}$\vskip 0.2cm\par
 \qquad\quad 
 \caption{\small Isom. classes of $(f,F)$ for $A \in G_V(T), V=\langle x^2,xy,xz\rangle$.}\label{2aaequivfig}
 (Theorem~\ref{2aAthm}, Equation \eqref{fFeqn}).
 \end{figure}\par
\normalsize
 \begin{theorem}\label{2aAthm}[$I_2$=\#2a] Let $T=(1,3^k,1), k\ge 3$ and assume that $\cha {\sf k}\not=2,3$.  There are the following isomorphism classes and dimensions for algebras $A=R/I\in G_V(T)$ where
 $V=I_2\cong  \langle x^2,xy,xz\rangle$ (\#2a) is fixed  and $k\ge 3$, as in Figures  \ref{2aaequivfig},\ref{2aequivdimfig}, \ref{2aequivfig}.  Each ideal $I$ defining $A\in G_V(T)$ satisfies 
 \begin{align}
 I&=(V,f,\Ann F\cap R_j,\m^{j+1}) \notag\\
 &\text { where $f\in R_3/xR_2\cong R^\prime_3$, and $F\in (V,f)_j^\perp\subset S^\prime_j$},
 \label{fFeqn}
 \end{align}
  where $f\cong  yz(y-z)$ or $y^2z$ or $z^3$.
  Here the fiber $G_V(T)$ over $V$ is irreducible of dimension five: it is fibered  over $V$ by a projective plane $\mathbb P^2$ parametrizing the element $F$, over the 3-space $\mathbb P^3$ parametrizing $f\in R^\prime_3$.
  \par
  There are over $f=yz(y-z)$ a two-parameter family $I(a,b)$ of isomorphism classes (Equations \eqref{Wab2eq} and \eqref{dualgenIabeq}), and a one-parameter family $I(a)$ (Equation \eqref{waeqn}) as well as the specific class $I(0,0)$.  Over each of $y^2z$ and $ z^3$ there are a finite set of isomorphism classes, as listed in Figure \ref{2aequivfig}.
 \end{theorem}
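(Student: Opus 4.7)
The proof proceeds in three phases. First I establish the structural form. Since $V=xR_1$ satisfies $H(R/(V))=(1,3,3,4,5,\ldots)$ and we need $H(R/I)=(1,3^k,1)$, the ideal $I$ must contain a degree-$3$ generator $f\notin R_1V=xR_2$, and I may represent it as $f\in R_3/xR_2\cong R'_3$ with $R'={\sf k}[y,z]$. By the Macaulay bound (Lemma \ref{Maclem}) applied to $R_1 f\subset R_4$, together with the Gotzmann regularity for the constant Hilbert polynomial $3$ (Lemma \ref{regslem}), the only Hilbert function compatible with $H(R/(V,f))_3=3$ is $(1,3,3,\overline{3})$; the a priori possibility $(1,3,3,3,4,\ldots)$ is ruled out. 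Consequently $(V,f)_j$ has codimension $3$ in $R_j$, and to cut the tail from $3$ down to $1$ in degree $j$ the ideal $I_j$ is determined by a two-dimensional subspace $\langle g,h\rangle\subset R_j/(V,f)_j$. Dualizing via apolarity, this is equivalent to specifying $I_j=\Ann F\cap R_j$ for some $F\in (V,f)_j^\perp\subset S'_j$. Finally, an extra generator in degree $j+1$ is needed to absorb $\m^{j+1}$, yielding the form \eqref{fFeqn}.

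Next I verify the fiber dimension and irreducibility. Over a fixed $V$, the form $f$ varies in $\mathbb{P}(R'_3)\cong\mathbb{P}^3$, and for each generic $f$ the dual generator $F$ varies in $\mathbb{P}((V,f)_j^\perp)\cong\mathbb{P}^2$. The open conditions ensuring that $R_i\circ F$ span $(V,f)_{j-i}^\perp$ for $1\le i\le j-1$ (so that the Hilbert function is exactly $T$) hold on a Zariski dense open set by semicontinuity (Lemma \ref{openlem}) together with the existence of at least one such $(f,F)$. Hence the fiber of $G_V(T)$ over $V$ is an open dense subvariety of a $\mathbb{P}^2$-bundle over $\mathbb{P}^3$, and in particular is irreducible of dimension $5$.

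For the isomorphism classification, I first normalize $f$. Since $\cha{\sf k}\ne 2,3$ and ${\sf k}$ is algebraically closed, binary cubics $f\in R'_3$ fall into exactly three orbits under $\Pgl(\langle y,z\rangle)$, represented by $yz(y-z)$ (three simple roots), $y^2z$ (one double, one simple) and $z^3$ (triple root). For each normal form of $f$, the remaining classification of $F$ is governed by the subgroup $\Stab_{\Pgl(3)}(V,f)$ of the \#2a stabilizer \eqref{stab2a} that preserves the line $\langle f\rangle\subset R'_3$; its orbits on $\mathbb{P}((V,f)_j^\perp)\cong\mathbb{P}^2$ (listed in Figure \ref{2aaequivfig}, third column) give the isomorphism classes.

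The principal obstacle is the case $f=yz(y-z)$, where $\Stab_{\Pgl}(f)/({\sf k}^\ast)$ is only the finite group $S_3$ permuting the three linear factors. Correspondingly the dual space $(V,f)_j^\perp=\langle Y^j,Z^j,(Y+Z)^{[j]}\rangle$ is genuinely $3$-dimensional, and writing $F=Y^j+aZ^j+b(Y+Z)^{[j]}$ yields a $2$-parameter family of ideals $I(a,b)$, with a residual involution $(a,b)\mapsto(a^{-1},a^{-1}b)$ coming from the swap $Y\leftrightarrow Z$; this is precisely the family recorded in Corollary \ref{6cor} and Equations \eqref{Wab2eq}, \eqref{Wabsigmaeq}. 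Setting $b=0$ produces the $1$-parameter family $I(a)$ of Equation \eqref{waeqn}, and $I(0,0)$ is an isolated class. By contrast, for $f=y^2z$ the stabilizer contains a $1$-dimensional torus $(y,z)\mapsto(\lambda y,\lambda^{-2}z)$ acting with weights on $\langle Z^j,YZ^{j-1},Y^j\rangle$, and for $f=z^3$ the stabilizer is larger still; in both cases the generic orbits on the $\mathbb{P}^2$ of dual generators are already isolated, and an elementary weight-and-scaling argument reduces the orbits to the finite lists displayed in Figure \ref{2aaequivfig} with the dimensions of Figure \ref{2aequivdimfig}.
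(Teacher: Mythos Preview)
Your overall strategy matches the paper's: establish the structural form $I=(V,f,\Ann F\cap R_j,\m^{j+1})$, compute the fiber dimension as $\dim\mathbb P^3+\dim\mathbb P^2=5$, normalize $f$ into the three binary-cubic orbits, and then classify $F$ by the residual stabilizer. For $f=y^2z$ and $f=y^3$ the paper, like you, simply appeals to the tables rather than writing out the orbit computation.

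There is, however, a concrete discrepancy in your treatment of $f=yz(y-z)$. The paper works on the \emph{primal} side: it takes the basis $\langle y^j, yz^{j-1}, z^j\rangle$ for $R_j/(V,f)_j$ and row-reduces $W=\langle f_5,f_6\rangle$, obtaining the chart $W(a,b)$ of Equation~\eqref{Wab2eq}, the degenerate chart $W(a)$ of Equation~\eqref{waeqn} (which is exactly the locus $z^j\in W$), and the point $W(0,0)$. You instead parametrize dually by $F=Y^j+aZ^j+b(Y+Z)^{[j]}$. Both are legitimate affine charts on the same $\mathbb P^2$, but they are \emph{different} charts. Your locus $b=0$ is the line $F\in\langle Y^j,Z^j\rangle$, whereas the paper's $W(a)$ corresponds dually to the condition $z^j\circ F=0$; since the $Z^j$-monomial coefficient of $Y^j+aZ^j+b(Y+Z)^{[j]}$ is $a+b$, the paper's one-parameter family is your line $a+b=0$, not $b=0$. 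So the sentence ``Setting $b=0$ produces the 1-parameter family $I(a)$ of Equation~\eqref{waeqn}'' is incorrect. This does not invalidate the qualitative conclusion (a two-parameter family, a one-parameter family, and a point), but you have not actually recovered the specific families the theorem names.

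A second, smaller gap: for $f=y^2z$ you invoke only the one-dimensional torus $(y,z)\mapsto(\lambda y,\lambda^{-2}z)$. A one-dimensional torus acting on $\mathbb P^2$ has one-dimensional generic orbits, not open ones, so by itself it cannot force finitely many classes. What makes the list finite is that the stabilizer of $(V,\langle y^2z\rangle)$ inside $\Stab_{\Pgl(3)}(V)$ (Equation~\eqref{stab2a}) contains the full two-dimensional diagonal torus $(y,z)\mapsto(\alpha y,\delta z)$: we are in $\Pgl(3)$ with $x'=x$ already fixed, not in $\Pgl(2)$, so the two scalings of $y$ and $z$ are independent. With that two-torus the weight argument you sketch goes through and yields the five classes of Figure~\ref{2aaequivfig}.
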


 \begin{proof}  Fix $V=I_2=(x^2,xy,xz)$. 

  For $i\ge 2$ we have $R/(I_2)_i\cong  R^\prime_i$ and $(I_2^\perp)_i=S^\prime_i$. 
Thus, an ideal $I$ defining $A\in G_V(T)$  satisfies
 \begin{equation}
 I=(I_2,f,W,\m^{j+1}),
 \end{equation}
 where we may regard $f\in R_3/xR_2$, parametrized by a projective space $\mathbb P(R^\prime_3)\cong\mathbb P^3$, the two-dimensional $W$  $\Grass(R_j/(I_2,f)_j, 2)$, parametrized by a projective plane $\mathbb P^2$, so a total fiber dimension of five; here also $W=\Ann F\cap R^\prime_j/(V,f)_j$, where $F\in S^\prime_j\cap ((V,f)_j^\perp$.
  The dimension of $G_{\{V\}}(T)=G_{\#2a}(T)$ is thus $7$. \par
  Consider $f\in R_3/xR_2\cong R^\prime_3$: up to isomorphism $f$ is one of $yz(y-z),y^2z$ or $y^3$, respectively, families of dimension $3,2,1$, respectively. For $f=y^2z$ or $y^3$ we will use $F$ to parametrize $I_j=\Ann(F)_j$. In Figure \ref{2aaequivfig} we write the space $\langle (f)^\perp_j\rangle$ and then the isomorphism classes of elements $F$ from that space.  However, for $f=yz(y-z)$ we prefer the direct calculation of isomorphism classes of $W$, just below.\vskip 0.2cm
 \noindent
 {\bf Calculating $W$ when $f=yz(y-z)$.}  Consider the vector subspace $\mathcal W=\langle y^j, yz^{j-1},z^j\rangle \subset R_j$, which induces a basis for $R_j/((V,f)\cap R_j)$.\par
 We may write $W=\langle f_5,f_6\rangle$.
 Then $I=(V,yz(y+z),W,\m^{j+1})$ where $\m=(x,y,z)$ and 
 \begin{align}
 f_5&=a_1y^j+a_2yz^{j-1}+a_3z^j\notag\\
f_6&=b_1y^j+b_2yz^{j-1}+b_3z^j\label{W2eqn}
 \end{align}
 \begin{enumerate}[(i).]
 \item
 If $a_1\not=0$ we may write $f_6=b_2yz^{j-1}+b_3z^j$; if now also $b_2\not=0$, we may, after scaling, write the two parameter family of isomorphism classes $I(a,b)=(xR_1, yz(y-z),W(a,b))$ where
 \begin{equation}
 W(a,b)=\langle f_5,f_6\rangle \text{ where } f_5=y^j-az^j, f_6=yz^{j-1}+bz^j.\label{Wab2eq}
 \end{equation}
 However, we may also apply the involution $ \sigma: y\to z, z\to y$. Then if $a\not=0$ we have
 $\sigma(f_5)=ay^j-z^j$ which we may scale to $y^j-a^{-1}z^j$ and $\sigma(f_6)=zy^{j-1}+by^j=yz^{j-1}+a^{-1}bz^j$, so we have for $a\not=0$
 \begin{equation}
 W(a,b)=W(a^{-1},a^{-1}b).\label{Wabsigmaeq}
 \end{equation}
 Using a dual generator $F(a,b)=aY^j+Z^j-bYZ^{j-1}$, we may write \normalsize
 \begin{equation}
 I(a,b)=( xR_{1},f,\langle\Ann\, F(a,b)\cap R^\prime_j\rangle,\m^{j+1}).\label{dualgenIabeq}
 \end{equation}
 \item If $a_1\not=0$ but $b_2=0$, we may reduce to 
 \begin{equation}
 W(a)=\langle f_5,f_6\rangle, f_5=y^j+ayz^{j-1}, f_6=z^j,\label{waeqn}
 \end{equation} a one-parameter family of isomorphism classes with degree-$j$ dual generator $F=aY^j-YZ^{j-1}$.
 \item If $a_1=b_1=0$, then, since $W$ is two-dimensional, we must have $W=\langle yz^{j-1},z^j\rangle$ isomorphic to $\sigma(W)=\langle y^{j-1}z,y^j\rangle\cong \langle y^j, yz^{j-1}\rangle=W(0,0)$.
 \end{enumerate}
 This completes the calculation of the isomorphism classes for $W$ when $f=yz(y-z)$.
		\vskip 0.2cm
 \par In Figure \ref{2aequivdimfig}, we give the dimension of these isomorphism classes, given $f$ and the class of a dual generator $F\in S_j$ for $I_j$; and in Figure \ref{2aequivfig} we give these isomorphism classes in terms of their defining ideals. For example, the isomorphism class of the pair $(f=y^3, F=Z^j)$ is determined by a choice of $y\in R_1/\langle x\rangle$ then $Z+aY$ (in place of $Z \in \langle Y,Z\rangle$, each a one-dimensional choice. 
The base dimension of the isomorphism class of $y^2z\in R_3/\langle xR_2\rangle\cong \langle y^2,yz,z^2\rangle$ is $2$ (choose $y,z\in R_1/\langle x\rangle$), giving the dimensions $(3,3,3,4,4)$
 for the respective classes of $F$ in row 2 of Figure \ref{2aequivdimfig}. Likewise, the base dimension of $yz(y-z)$ is 3 (parametrized by an element of $R_3/xR_2\cong {\sf k}^4$ up to ${\sf k}^\ast$ action, so by $\mathbb P^3$); there are at most $3(j-2)$, and generically $3(j-2)$ perfect powers\footnote{This multiplicity follows from the theory of Wronskians - see \cite[Definition 2.5 and Lemma 2.6]{IY}: when $K$ is a vector space in $k[Y,Z]$ of dimension 3 and degree j, the Wronskian $W(K)$ is a form of degree $3(j-2)$ in $Y,Z$, whose roots - if distinct -determine the perfect powers in $K$.} in the space $(V,yz(y-z))^\perp_j$, yielding dimension $3$ - the base dimension - when $F=Z^j$, as there are no parameters for the fibre. The generic element of $G_V(T)$ is, evidently the family with two-dimensional fiber $I(a,b)=(V,yz(y-z),W(a,b))$ of Equation \eqref{dualgenIabeq}, 
where  $I(a,b)_j=\langle\Ann\,F(a,b)\cap R_j\rangle, $ with $F(a,b)=aY^j+Z^j-bYZ^{j-1}$. This family has total dimension $7$.  \vskip 0.2cm \label{endproofthm5}
This completes the proof of Theorem 5.\qquad\qquad\qquad $\square$

  \begin{figure}[h]
 \small
 \quad $\begin{array}{c|c|c}\hline
 f&F& \text { dim. of fiber $G_V(T)$ over $V$}\\\hline
 yz\ell&Z^j, Y^j+aZ^j,aY^j+Z^j-bYZ^{j-1}&3,4,5;\\
 y^2z&Z^j, YZ^{j-1},Y^j, Y^j+Z^j, YZ^{j-1}+Y^j&3,3,3,4,4;\\
 y^3&Z^j,YZ^{j-1},Y^2Z^{j-2},Y^2Z^{j-2}+Z^j&2,2,2,3.
 \\\hline
 \end{array}$\vskip 0.2cm\par
\qquad where $T=(1,3^k,1), k\ge 3$ and $ \ell=y-z$.
 \caption{Dimension of isom. classes in $G_V(T),V=\langle x^2,xy,xz\rangle$.}\label{2aequivdimfig}$T=(1,3^k,1), k\ge 3$  (Theorem \ref{2aAthm})
 \end{figure}
 
  \begin{figure}[h]
 \small
 \qquad $\begin{array}{|c||c|c|c|}\hline
 f&F_1&F_2&F_3\\\hline
 yz(y-z)&Z^j &Y^j+aZ^j,&aY^j+Z^j-bYZ^{j-1}\\
 &W(0,0)&W(a)&W(a,b) \text{ of Equation \eqref{Wab2eq}}\\\hline
 \end{array}$\vskip 0.2cm
\qquad$\begin{array}{|c||c|c|c|c|c|}\hline
f&F_1&F_2&F_3&F_4&F_5\\\hline
 y^2z&Z^j& YZ^{j-1}&Y^j& Y^j+Z^j& Y^j+YZ^{j-1}\\
 &(y^j,yz^{j-1})&(y^j,z^j)&(y^j,yz^{j-1})&(y^j-z^j,yz^{j-1})&(y^j-yz^{j-1},z^j)\\\hline
 \end{array}$\vskip 0.2cm
\qquad $\begin{array}{|c||c|c|c|c|}\hline
 f&F_1&F_2&F_3&F_4\\\hline
 y^3&Z^j&YZ^{j-1}&Y^2Z^{j-2}&Y^2Z^{j-2}+Z^j\\\hline
 &(y^2z^{j-2},yz^{j-1})&(y^2z^{j-2},z^j)&(yz^{j-1},z^j)&(y^2z^{j-2}-z^j,yz^{j-1})
 \\\hline
 \end{array}$\vskip 0.2cm\par
\qquad where $T=(1,3^k,1), k\ge 3$ and $ \ell=y+z$.
 \caption{Isom. classes of ideals in $G_V(T), V=\langle x^2,xy,xz\rangle$.}\label{2aequivfig}$T=(1,3^k,1), k\ge 3$ (Theorem \ref{2aAthm}).
 \end{figure}
 \begin{figure}
 \begin{small}
$$
\vbox{\offinterlineskip 
\halign{\strut\hfil# \ \vrule\quad&# \ &# \ &# \ &# \ &# \ &# \
&# \ &# \ &# \ &# \ &# \ &# \ &# \
\cr
total&1&6&8&3\cr
\noalign {\hrule}
0&1 &--&--&--&\cr
1&--&3 &3&1 &\cr
2&-&1 &1&&\cr
3&-&2&3& 1 &\cr
4&-&&1& 1 &\cr
\noalign{\smallskip}
}}
$$
\end{small}
\vskip -0.8cm
\caption{Betti diagram J$_4, V=$ \#2a, various $(f,F)$.}\label{BettiJIfig}
For $f,F)=(y^3,YZ^3)$, also $(f=yz(y-z),$all $F$), and $(f,F)=(y^2z,F\not=Z^4)$. For 
J$_j$ move the last two rows to indices $(j-1,j)$. See Figure \ref{2aequivfig}.
\end{figure}
\begin{figure}
 \begin{small}
$$
\vbox{\offinterlineskip 
\halign{\strut\hfil# \ \vrule\quad&# \ &# \ &# \ &# \ &# \ &# \
&# \ &# \ &# \ &# \ &# \ &# \ &# \
\cr
total&1&7&10&4\cr
\noalign {\hrule}
0&1 &--&--&--&\cr
1&--&3 &3&1 &\cr
2&-&1 &1&&\cr
3&-&2&4& 2 &\cr
4&-&1&2& 1 &\cr
\noalign{\smallskip}
}}
$$
\end{small}
\vskip -0.8cm
\caption{Betti diagram K$_4, V=$ \#2a., for $(f,F)=(y^3,Z^4)$.}\label{BettiKfig}
The lex ideal for $T=(1,3,3,3,1)$) and $(f,F)=(y^2z,Z^4)$.
 For the lex table
K$_j$ of socle degree $j\ge 4$ move the last two rows to indices $(j-1,j)$.
\end{figure}
 \normalsize
 \end{proof}\noindent
 {\it Note}.
  The Betti tables J and K for these classes of Figure \ref{2aequivfig} were calculated using Macaulay 2 for $j=4$. 
  Evidently Betti table $K$ of Figure~\ref{BettiKfig} is the table for the lexicographic ideal (having highest column sums). Note that the lex ideal of socle degree $4$, so $T=(1,3^3,1)$ is that corresponding to $(y^3,Z^4)$ in
  Figure \ref{2aequivdimfig}.
 \newpage \subsection{Questions and Problems}

 \subsubsection{Limits of Betti tables in $\Gor(T)$ and $G_T$.}\label{bettisec}
Let $T$ be an arbitrary Gorenstein sequence - one possible for a graded Artinian Gorenstein algebra. For $\Gor(T)$ in codimension three, S.J. Diesel and D. Buchsbaum-D. Eisenbud determined that, given $T$ the minimal Betti resolutions compatible with $T$ have a poset structure, and satisfy a frontier property: the Zariski closure of a Betti stratum of $\Gor(T)$ is the union of Betti strata (\cite{Di}, see also  (\cite[Theorem 5.25]{IK} for a statement). For our special $T=(1,3^k,1)$ the Gorenstein minimal resolution table is unique if $k>2$ (Betti table C$_j$, Figure \ref{BettiCfig}); there are two Gorenstein minimal resolutions when $k=2$ (Betti table CI - Figure~\ref{BettiCIfig}, and Betti table C).  H. Schenck et al in \cite[p. 65-66]{KKRSSY} show that in an analogous codimension four case, the Betti tables do {\it not} satisfy a frontier property.  I. Peeva has shown that all Betti tables for a given Hilbert function may be obtained by consecutive cancellation from that of a lex-segment ideal (Lemma \ref{Peevalem}). She also showed \cite[Remark after Theorem 1.1]{Pe}:
\begin{lemma}\label{conseclem} Consider a flat family $\mathcal F(t)$ over ${\sf k}[t]$ of graded Artinian quotients of $R$ whose fiber $\mathcal F(0)$ over $0$ is $S/J$ and whose fiber $\mathcal F(t)$ over $t\in {\sf k} \backslash 0$ is $S/J^\prime$. Then the graded Betti numbers $\beta^\prime_{i,j}=\beta_{i,j}(S/J^\prime)$ of the general fiber may be obtained from the graded Betti numbers $\beta_{i,j}(S/J)$ of the special fiber
 by a sequence of consecutive cancellations.
\end{lemma}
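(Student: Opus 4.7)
\textbf{Proof proposal for Lemma~\ref{conseclem}.}
The strategy is to lift the minimal graded free resolution of the special fiber $S/J$ to a graded free resolution of the family $\mathcal F$ defined over $R[t]$, then base change to the generic point and observe that minimizing the resulting (possibly non-minimal) resolution of $S/J^\prime$ proceeds through consecutive cancellations in the sense of Section~\ref{Betti1sec}.

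First I would localize at $(t)$ and construct the lifted resolution. Because $\mathcal F$ is a finitely generated graded $R[t]$-module, flat over ${\sf k}[t]$, the base change $\mathcal F_{(t)}=\mathcal F\otimes_{{\sf k}[t]}{\sf k}[t]_{(t)}$ is flat over the DVR ${\sf k}[t]_{(t)}$ and each graded piece is a free ${\sf k}[t]_{(t)}$-module of rank $H(S/J)_n=H(S/J^\prime)_n$. Lift a minimal graded generating set of $S/J=\mathcal F_{(t)}/t\mathcal F_{(t)}$ to homogeneous elements of $\mathcal F_{(t)}$; graded Nakayama over ${\sf k}[t]_{(t)}$ shows that these elements generate $\mathcal F_{(t)}$ globally. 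The kernel of the induced surjection $\tilde F_0\twoheadrightarrow\mathcal F_{(t)}$ is graded, finitely generated, and flat over ${\sf k}[t]_{(t)}$ (being a submodule of a free module over a PID), so the same procedure iterates to yield a graded free resolution
\begin{equation*}
\tilde{\mathcal F}_\bullet:\quad \cdots\to\tilde F_2\to\tilde F_1\to\tilde F_0\to\mathcal F_{(t)}\to 0
\end{equation*}
whose reduction modulo $t$ is the minimal graded free resolution of $S/J$. In particular $\mathrm{rank}(\tilde F_i)_j=\beta_{i,j}(S/J)$ for every $(i,j)$.

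Next I would base change to the generic point, i.e.\ tensor with the fraction field ${\sf k}(t)$. Flatness guarantees that $\tilde{\mathcal F}_\bullet\otimes_{{\sf k}[t]_{(t)}}{\sf k}(t)$ is a graded free resolution of $\mathcal F\otimes{\sf k}(t)$ over $R\otimes{\sf k}(t)$, with the same ranks $\beta_{i,j}(S/J)$. Since for every $t_0\neq 0$ the fiber of the family is the fixed algebra $S/J^\prime$, upper-semicontinuity of graded Betti numbers in flat families forces $\beta_{i,j}(\mathcal F\otimes{\sf k}(t))=\beta_{i,j}(S/J^\prime)$, so minimizing this resolution returns the minimal graded free resolution of $S/J^\prime$. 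The standard minimization step locates a unit entry in some differential $\tilde F_{i+1}\to\tilde F_i$. Such an entry can only arise between two direct summands $R(-j)$ and $R(-j)$ of the same internal degree $j$, because all other entries lie in the irrelevant ideal. A change of basis splits off a trivial subcomplex $R(-j)\xrightarrow{\ 1\ }R(-j)$ whose removal simultaneously decreases $\beta_{i+1,j}$ and $\beta_{i,j}$ by one: this is exactly a consecutive cancellation. Iterating until no unit entries remain finishes the proof.

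The principal obstacle is Step~1: one must show that the graded free resolution of $\mathcal F_{(t)}$ exists \emph{with the ranks equal to the minimal Betti numbers of $S/J$}, so that its reduction at $t=0$ is literally the minimal resolution. This hinges on two ingredients: that $\mathcal F_{(t)}$ is ${\sf k}[t]_{(t)}$-flat (hence each graded piece is free), so graded Nakayama correctly lifts minimal generating sets; and that every graded submodule of a graded free ${\sf k}[t]_{(t)}$-module is again flat, allowing the construction to iterate. Both are standard for finitely generated graded modules flat over a DVR. Once Step~1 is in place, the remaining minimization is formal, and the alternating-sum constraint $\sum_i(-1)^i\bigl(\beta_{i,j}(S/J)-\beta_{i,j}(S/J^\prime)\bigr)=0$ coming from the common Hilbert series guarantees that valid orderings of the cancellations exist.
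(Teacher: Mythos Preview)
The paper does not supply its own proof of this lemma; it simply attributes the statement to Peeva \cite[Remark after Theorem~1.1]{Pe}. Your argument is correct and is essentially the standard one underlying that remark: lift the minimal free resolution of the special fiber $S/J$ to a graded free resolution of the family over the local ring ${\sf k}[t]_{(t)}$ (using flatness and graded Nakayama), base change to obtain a possibly non-minimal resolution of the generic fiber with the Betti numbers of $S/J$, and then minimize---each unit entry removed being a consecutive cancellation because graded maps of degree zero force matching twists $R(-j)\to R(-j)$.

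Two minor comments. First, your detour through the generic point and the appeal to semicontinuity is fine, but you could equally well specialize the lifted resolution to any $t_0\neq 0$ directly by working over all of ${\sf k}[t]$ (or a suitable open containing $0$) rather than the localization; either route yields a non-minimal resolution of $S/J^\prime$ with ranks $\beta_{i,j}(S/J)$. Second, the closing remark about the alternating-sum constraint guaranteeing ``valid orderings'' is superfluous: the minimization procedure itself furnishes the sequence of cancellations, so no separate existence argument is needed.
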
\par\noindent
An example is the  C,D,E Betti diagrams (Figures \ref{BettiCfig}, \ref{BettiDfig} and \ref{BettiEfig}, respectively) for the three isomorphism classes \#6a.i,,\#6a.ii., and \#6a.iii., respectively where $I_2=(xy,xz,yz)$ (\#6a), for 
$G_T,T=(1,3,3,1)$: each is obtained by consecutive cancellation from the next.\footnote{We thank Alexandra Seceleanu who pointed out an error in a previous writing of the Betti diagram for \#6a.ii.; we have made consequent changes here.}\par

\vskip 0.2cm\par\noindent
{\bf Question}. Lemma \ref{conseclem} assumes a ``jump specialization'' - there is a single isomorphism class among the more general algebras.  Is there a similar statement when we allow the more general family to not be in a single isomorphism class of algebras:  that is, a family $\mathcal F(t)$ whose fiber over $t\in {\sf k}$ is $\mathcal F(t)=S/J^\prime(t)$, but which have a common Betti table $\beta^\prime$ for $t\not=t_0$? Then is $\beta^\prime$ obtained by consecutive cancellation from the Betti table $\beta(S/J(t_0))$?

\begin{question} 
 Do the Betti table strata for $T=(1,3^k,1)$ satisfy the frontier property? If so, is this a more general result 
 for $T$ that are codimension three Gorenstein sequences?
\end{question}
Since we have shown that $\overline{Gor(T)}$ need not be all of $G_T$ for some codimension three
Gorenstein sequences, this question appears to be open for such $T$ in general.

 \subsubsection{Specialization of $G_T$ isomorphism classes.}
Note: As the dimension of the classes $G_V(T)$ of Artinianalgebras corresponding to the nets \#7b,\#7c are seven, we can expect that they each specialize to a dimension six class,\#6c.ii, or \#6c.iii? (latter, when $\cha {\sf k}=3$). Which specializations occur? 

\begin{question} We found in specializing nets of conics, that because each family but \#8b was a complete isomorphism class,  there were no jump specializations, that is if we have specializations $U\to Z$ and the pair $U\to Y, Y\to Z$ then the specialization $U\to Z$ comes from first specializing $U\to Y$, then $Y\to Z$. Now we would need to consider - and perhaps rule out - that the algebra class say in \#8a, might jump, that is, specialize to a class in \#6a, but not specialize there through a class in \#7a. Can we rule such out? 
\end{question}
   \vskip 0.2cm\noindent
   {\bf Goal.} Make tables of specializations of Artinian algebras of Hilbert function $T=(1,3^k,1)$, extending Figure \ref{1fig} for nets of conics. The first table would be for isomorphism classes of  $T=(1,3,3,1)$ algebras, and the second for isomorphism classes of $T=(1,3^k,1), k\ge 3$ algebras.
   
\begin{remark}\label{specializeto6arem} The only direct specializations to \#6a algebras can be from \#7a algebras: here \#7a has two classes, one of dimension 8, and the other of dimension 7. In specializing we must drop dimension of a family by at least one, so we have that \#6a.i. cannot be the specialization of any family in \#7a; \#6a.ii could only be a specialization from \#7a.i of dimension 8; and \#6a.iii might be a specialization of either class in \#7a. 
\end{remark}\vskip 0.2cm\noindent
{\bf Question}.  According to Remark \ref{closurerem} we expect that $\overline{G_{\#8a}(T)}\cap G_{\#7a}(T)=G_{\#7a.ii}(T)$, the dimension seven component. What is the intersection of the closure of a 8-dimensional cell of \#8a with the 8-dimensional cell here?
 \vskip 0.2cm\par\noindent
 \subsubsection{Possible future directions involving Jordan type or applications.}\par
 The Jordan degree type of multiplication $m_\ell$ on a graded Artinian algebra $A$, adds to the information of the lengths of Jordan strings, their initial degrees.
     Note that \cite{AAIY} has a table with the classification of Jordan degree types for graded AG algebras of Hilbert function  $(1,3^k,1)$: these are all symmetric, by a general result of T. Harima and J. Watanabe \cite{HW} (see also \cite[Prop. 2.38]{IMM}).
     But their  limit algebras may have JDT that are not symmetric. The notation $1_{2\uparrow 3}$ means two
      strings of length 1, beginning in degree 2 and 3, respectively.
    \begin{example}[Limit algebra has non-symmetric JDT, using non-generic $\ell$] Take $I=(x^2,xy,xz,y^3,z^3,\m^4)$  of Hilbert function $H(R/I)=(1,3,3,1)$ and $\ell=x$, then $x^2=0$ and the Jordan degree type $S_{x,A}=(2_0,(1_{1\uparrow 2})^2, 1_{2\uparrow 3}$) which is not symmetric.
    \end{example} 
  A.  What are the possible generic JDT for these algebras in $G_T$ that are not Gorenstein?\par
  B. Can we specify how Jordan types correspond to the Betti tables, as in a recent paper of N. Abdallah and H. Schenck \cite{AbSc}?

  \par  C. Some people have applied nets of conics. Can we apply these algebras?
    Mapping germs? \par
    See Appendix B, p. 82 of \cite{AEI} for some applications others have made.
    \par
    D.  The classification of pencils of conics up to isomorphism is well known and classical (see, for example \cite[\S 3, Table 2]{AEI}). Describe the related isomorphism classes in the family $G_T$ of graded Artinian algebras of Hilbert function $T=(1,3,4^k,3,1)$?\par
        \vskip 0.3cm
 
\addcontentsline{toc}{section}{List of Figures}
  \listoffigures
  \vskip 0.5cm
  
    \begin{ack} We appreciate comments of Pedro Macias Marques, Hal Schenck, Alexandra Secealanu, and Maria Evelina Rossi. We appreciate a careful reading and very helpful questions and comments of the referee. The first author would like to thank GS Magnusons Fond for financing research trips to University of C\^{o}te-D'Azur in Nice and to Northeastern University in Boston. 
    \end{ack}
\vskip 0.5cm
  {\bf Declarations:}\par

{\bf Funding}:  The first author's visit to University of C\^{o}te-D'Azur in Nice was funded by GS Magnusons Fond MG2018-0001; her visit to Northeastern University was funded by GS Magnusons Fond MF2022-0011. The authors received no other funding.\vskip 0.3cm
{\bf Author contribution}: All listed authors contributed significantly to the article.\vskip 0.2cm
{\bf Conflicts of Interest/Competing Interests}: The authors have no relevant financial or non-financial interests to disclose.\vskip 0.2cm
{\bf Data Availability}.
All data generated or analysed during this study are included in this published article.\vskip 0.4cm
\addcontentsline{toc}{section}{References}    
    
    \vskip 3cm
  \quad {\sf e-mail}:\par
    nancy.abdallah@hb.se\par
    a.iarrobino@northeastern.edu\par
    joachim.yameogo@univ-cotedazur.fr
\end{document}